\theoremstyle{plain}
\newtheorem{thm}{Theorem}
\newtheorem*{thm*}{Theorem}
\newtheorem{prop}{Proposition}[section]
\newtheorem*{prop*}{Proposition}
\newtheorem{lem}{Lemma}[section]
\newtheorem*{lem*}{Lemma}
\theoremstyle{definition}
\newtheorem{dfn}{Definition}[section]
\newtheorem{conj}{Conjecture}
\newtheorem*{conj*}{Conjecture}
\theoremstyle{remark}
\newtheorem{rmk}{Remark}
\newcommand{\Vol}{\mathrm{Vol}}
\newcommand{\tr}{\operatorname{trace}}
\newcommand{\Li}{\mathrm{Li}_2}
\newcommand{\ADO}{\operatorname{ADO}}
\newcommand{\re}{\operatorname{Re}}
\newcommand{\im}{\operatorname{Im}}
\newcounter{appendix}
\newcommand{\CS}{\operatorname{CS}}
\title[Complex. tetra., fund. groups, and volume conj.]
{Complexified tetrahedrons, fundamental groups, and volume conjecture for double twist knots}
\author{Jun Murakami}
\email{murakami@waseda.jp}
\address{Department of Mathematics, Faculty of  Science and Engineering, Waseda University, 3-4-1 Ohkubo, Shinjuku-ku, Tokyo, 169-9555, JAPAN}
\thanks{This work was supported by JSPS KAKENHI Grant Numbers JP20H01803, JP20K20881.}
\date{\today}
\keywords{Double twist knot, hyperbolic structure, volume conjecture, knot group}
\subjclass[2020]{57K14,57K32,57M05}
\begin{document}
\begin{abstract}
In this paper, the volume conjecture for  double twist knots are proved.  
The main tool is the complexified tetrahedron and the associated $\mathrm{SL}(2, \mathbb{C})$ representation of the fundamental group.  
A complexified tetrahedron is a version of a truncated or a doubly truncated tetrahedron whose edge lengths and the dihedral angles are complexified.  
The colored Jones polynomial is expressed in terms of the quantum $6j$ symbol, which corresponds to the complexified tetrahedron.  
\end{abstract}

\maketitle

\section*{Introduction}
Let $K$ be a  framed knot or link in $S^3$.  
In the following, knots include links unless otherwise described.   
Let $V_N(K)$ be the colored Jones polynomial of $K$ which corresponds to the $N+1$ dimensional irreducible representation of the quantum group  $\mathcal{U}_q(sl_2)$. 
Here $V_N(K)$  is normalized to satisfy  $V_N(\phi)=1$ and $V_N(\bigcirc) = -(q^{N+1}- q^{-N-1})/(q-q^{-1})$ for the trivial   knot.  
The parameter $q$ corresponds to  $A^2$ where $A$ is the parameter used for defining the Kauffman bracket polynomial. 
Let 
$J_{N-1}(K) = V_{N-1}(K)/V_{N-1}(\bigcirc)$ where $q = \exp(\pi i/N)$ for $i = \sqrt{-1}$, which is the $2N$-th root of unity.   
The volume conjecture predicts that certain limit of the colored Jones polynomial gives Gromov's simplicial volume $||S^3\setminus K||$ of the complement of $K$ as follows.  
%
%
\begin{conj}[Volume conjecture \cite{MM}]
For a knot or link $K$, 
\[
2\, \pi \lim_{N\to \infty}\frac{\log\left|J_{N-1}(K)\right|}{N}
=
v_3 \, ||S^3\setminus K||
\]
where $v_3$ is the hyperbolic volume of the regular ideal tetrahedron.
\end{conj}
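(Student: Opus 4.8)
The plan is to reduce the statement to a saddle-point analysis of a state-sum expression for $J_{N-1}(K)$. First I would realize the colored Jones polynomial as a sum of products of quantum $6j$ symbols indexed over the admissible colorings of a spine of the knot complement; by the correspondence developed in the paper, each such quantum $6j$ symbol is the quantum analogue of a complexified (doubly) truncated tetrahedron. At $q=\exp(\pi i/N)$ the individual $6j$ symbols have an asymptotic expansion whose exponential rate is governed by the dilogarithm, so that each summand takes the form $\exp\!\big(\tfrac{N}{2\pi}\,\Phi(\mathbf z)+o(N)\big)$ for a potential function $\Phi$ assembled from $\Li$ evaluated at the shape parameters $\mathbf z$ of the tetrahedra.

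Second, I would replace the sum by an integral and apply the method of steepest descent to isolate the dominant term. The critical-point equations $\partial\Phi/\partial z_j=0$ are arranged to coincide, after exponentiation, with Thurston's gluing and completeness equations for an ideal triangulation of $S^3\setminus K$; this is where the fundamental-group input enters, since such a solution determines a holonomy representation $\rho\colon\pi_1(S^3\setminus K)\to\mathrm{SL}(2,\mathbb C)$. Choosing the geometric (discrete faithful) solution yields a critical value $\Phi_0$ with $\re\Phi_0=\Vol(S^3\setminus K)$, giving
\[
2\pi\lim_{N\to\infty}\frac{\log|J_{N-1}(K)|}{N}=\Vol(S^3\setminus K)=v_3\,\|S^3\setminus K\|
\]
for hyperbolic $K$, the final equality being the Gromov--Thurston theorem.

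Third, to reach an arbitrary knot or link I would invoke the geometric decomposition of the complement along incompressible tori together with the additivity of $\|S^3\setminus K\|$: the simplicial volume equals $v_3^{-1}$ times the total hyperbolic volume of the hyperbolic JSJ pieces, while Seifert-fibered and torus pieces contribute nothing. I would then show that $J_{N-1}(K)$ factorizes compatibly with this decomposition --- multiplicativity under connected sum, and the satellite and cabling formulas for the colored Jones polynomial --- so that the asymptotic growth rate is additive over the hyperbolic pieces and vanishes on the rest, matching the right-hand side piece by piece.

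The hard part will be the second step: interchanging the limit with the sum and, above all, proving that the geometric critical point dominates. One must establish that the asymptotic expansion of the $6j$ symbols is uniform across the summation range, that the steepest-descent contour can be pushed through the geometric saddle without meeting a higher one, and that the competing critical points --- Galois conjugates of the geometric representation, and reducible or parabolic solutions of the gluing equations --- do not overtake it. Controlling these rival contributions, and the degenerate cases where the triangulation collapses, is precisely the obstruction that leaves the conjecture open in general; it is here that the rigidity of the complexified-tetrahedron construction would have to be exploited most forcefully.
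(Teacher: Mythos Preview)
The statement you are trying to prove is Conjecture~1, and the paper does not prove it: it is recorded there as an open conjecture, and the paper's contribution is to establish it only for the family of hyperbolic double twist knots (Theorem~1). So there is no ``paper's own proof'' to compare against; your task was to prove something that is not known.

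Your first two steps are an accurate high-level description of the strategy that has succeeded for every knot where the conjecture \emph{is} known, and you correctly identify the genuine obstruction: uniformity of the $6j$ asymptotics over the full summation range and, above all, the contour-deformation argument showing that the geometric saddle dominates. These are not technicalities one can wave through; they are exactly what the paper spends Sections~2--3 and Appendix~E verifying case by case for $W_p$ and $D_{p,r}$, with explicit contour plots and region-by-region estimates. No general mechanism is known for pushing the integration surface past competing saddles for an arbitrary hyperbolic knot, so your second step is a restatement of the open problem rather than a proof.

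Your third step contains an additional gap. Multiplicativity of $J_{N-1}$ under connected sum is fine, and additivity of simplicial volume under JSJ is classical, but the claim that the colored Jones polynomial ``factorizes compatibly'' with an arbitrary JSJ decomposition is not established. The satellite formula expresses $J_{N-1}$ of a satellite as a sum over colors of the companion and pattern contributions, not as a product, and controlling the growth rate of such a sum again requires exactly the kind of saddle-point domination argument you have deferred. Even the statement that Seifert-fibered pieces contribute zero growth is nontrivial (it is known for torus knots by direct computation, not by a structural argument). So the reduction to hyperbolic pieces is itself conjectural at the level of rigor you would need.
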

If $S^3\setminus K$ admits the hyperbolic structure, in other words, $K$ is a hyperbolic knot or link, then $v_3 \, ||S^3\setminus K|| = \Vol(K)$ where $\Vol(K)$ is the hyperbolic volume of $S^3\setminus K$.  
For hyperbolic knots and links, the following is also conjectured. 
\begin{conj}[Complexified volume conjecture \cite{MMOTY}]
For a hyperbolic knot or link $K$, 
\[
2\, \pi\, \lim_{N\to \infty}\frac{\log J_{N-1}(K)}{N}
=
\Vol(K) +\mathrm{CS}(K)\, \sqrt{-1}
\quad (\mathrm{mod}\  \pi^2 \sqrt{-1}\,\mathbb{Z})
\]
where $\mathrm{CS}(K)$ is $2 \pi^2$ times the Chern-Simons invariant  $cs(S^3 \setminus K)$, where $cs(S^3 \setminus K)$ is a real number between $0$ and $1/2$.  
\end{conj}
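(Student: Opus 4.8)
The plan is to express $J_{N-1}(K)$ as a state sum of quantum $6j$ symbols over an ideal triangulation of $S^3\setminus K$, to extract the large-$N$ asymptotics by the saddle point method, and to identify the dominant saddle with the complete hyperbolic structure. First I would fix an ideal triangulation $\mathcal T$ of the complement into tetrahedra $T_1,\dots,T_m$ and, using the expression of the colored Jones polynomial through quantum $6j$ symbols, write, at $q=\exp(\pi i/N)$,
\[
J_{N-1}(K)=\sum_{\mathbf k}\ \prod_{s=1}^{m}\left\{6j\right\}_{T_s}(\mathbf k),
\]
the sum ranging over admissible colorings $\mathbf k$ of the internal edges. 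This is where the geometric content of the paper enters: by the identification of each quantum $6j$ symbol with a complexified tetrahedron, every factor carries edge-length and dihedral-angle data that will become a hyperbolic shape parameter in the limit.

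Next I would substitute the asymptotic expansion of the quantum $6j$ symbol. Rescaling the colors as continuous variables $\mathbf z=(\pi/N)\,\mathbf k$, each factor behaves like $\exp\!\big(\tfrac{N}{2\pi\sqrt{-1}}\,\Phi_{T_s}(\mathbf z)\big)$, where $\Phi_{T_s}$ is a sum of dilogarithms $\Li$ equal, up to an additive constant, to the complex volume function of the corresponding complexified tetrahedron. Collecting the factors gives a single potential $V=\sum_s\Phi_{T_s}$ and the approximation
\[
J_{N-1}(K)\ \approx\ \sum_{\mathbf z}\exp\!\Big(\tfrac{N}{2\pi\sqrt{-1}}\,V(\mathbf z)\Big).
\]
Replacing the sum by an integral and applying stationary phase, the leading term is governed by the critical points of $V$.

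I would then show that the critical point equations $\partial V/\partial z_j=0$ coincide with Thurston's gluing and completeness equations for $\mathcal T$ — that is, with the defining equations of the $\mathrm{SL}(2,\mathbb C)$ character of the complete structure. Here the construction of $\mathrm{SL}(2,\mathbb C)$ representations from glued complexified tetrahedra guarantees a solution $\mathbf z^\ast$ realizing the discrete faithful representation. Evaluating the potential there, the Neumann--Zagier relation together with Yoshida's treatment of the Chern--Simons term gives
\[
-\sqrt{-1}\,V(\mathbf z^\ast)\ \equiv\ \Vol(K)+\CS(K)\,\sqrt{-1}\quad(\mathrm{mod}\ \pi^2\sqrt{-1}\,\mathbb Z),
\]
and since $\log J_{N-1}(K)\sim \tfrac{N}{2\pi\sqrt{-1}}V(\mathbf z^\ast)$, multiplying by $2\pi/N$ and taking the limit yields the asserted formula.

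The hard part will be the rigorous justification of the saddle point analysis, and above all the proof that the geometric saddle $\mathbf z^\ast$ is \emph{dominant}: one must show that $\re\big(\tfrac{1}{2\pi\sqrt{-1}}V\big)$ is strictly larger at $\mathbf z^\ast$ than at every competing critical point and on the boundary of the summation region, so that no other state overwhelms the geometric contribution, and that the discrete sum is uniformly approximated by the integral as $N\to\infty$. Controlling the global shape of a potential whose number of variables grows with the complexity of $\mathcal T$ — deforming onto steepest-descent contours through $\mathbf z^\ast$ while excluding subdominant-saddle interference and accidental cancellation — is precisely the step that remains unresolved for general hyperbolic knots, and it is the principal obstacle to completing the argument.
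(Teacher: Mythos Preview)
The statement you are addressing is a \emph{conjecture}, not a theorem: the paper does not prove it for general hyperbolic knots, and there is no proof in the paper to compare against. Your outline is the standard heuristic program for the conjecture, and you yourself correctly identify the unresolved obstacle at the end --- establishing dominance of the geometric saddle over all competing critical points and boundary contributions for a potential in many variables.

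It is still worth contrasting your sketch with what the paper actually does for its target families (double twist knots $D_{p,r}$ and twisted Whitehead links $W_p$). You propose an ideal triangulation into tetrahedra $T_1,\dots,T_m$, producing a potential in roughly as many variables as there are internal edges; the paper explicitly remarks that this is what makes the general conjecture hard. Its contribution is a different decomposition: by rewriting $J_{N-1}$ through the ADO invariant and a \emph{single} quantum $6j$ symbol, the complement is realized as a union of just two \emph{complexified tetrahedra} (deformed truncated tetrahedra whose complex edge parameters are eigenvalues of the $\mathrm{SL}(2,\mathbb{C})$ holonomy), rather than many ideal ones. The resulting potential depends on only one or two complex variables, so the saddle/dominance verification --- your acknowledged hard step --- becomes an explicit low-dimensional contour deformation that can be checked directly for small $p,r$ and then propagated by continuity. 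Without an analogous variable-reduction device, your general program remains stuck exactly where you say it does.
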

For prime hyperbolic knots, this conjecture is proved for  knots with less than or equal to seven crossings.  
Here, we prove Conjecture 1 for all hyperbolic double twist knots.
\begin{figure}[htb]
\begin{small}
\[
\begin{matrix}
\begin{matrix}
\includegraphics[scale=0.8, alt = {Borromean rings}]{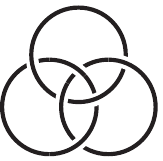}
\end{matrix}
&
\begin{matrix}
\includegraphics[scale=0.8, alt = {Borromean rings}]{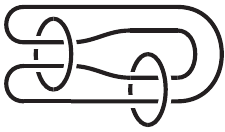}
\end{matrix}
& 
\begin{matrix}
\includegraphics[scale=0.8, alt = {Borromean rings with a half twist}]{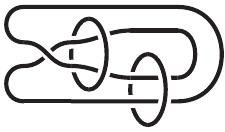}
\end{matrix}
\\[5pt]
\text{Borromean rings $B$} &
 \text{Another expression of $B$} 
& 
\text{$B_{1}$: first variation of $B$}
\\
\begin{matrix}
\includegraphics[scale=0.8, alt = {Borromean rings with a full twist}]{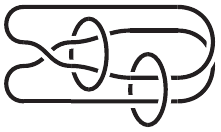}
\end{matrix}
& 
\begin{matrix}
\includegraphics[scale=0.75, alt={Whitehead link}]{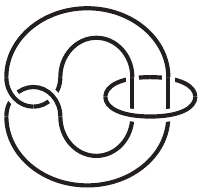}
\end{matrix}
&  
\begin{matrix}
\includegraphics[scale=0.75, alt={twisted Whitehead link}]{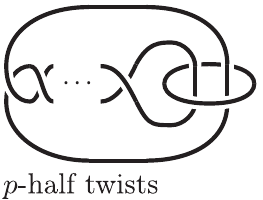}
\end{matrix}
\\[5pt]
\!\!\!\! \text{$B_{1,1}$: second variation of $B$} 
&  \text{Whitehead link $W$} 
&  \text{twisted Whitehead link $W_p$}
\end{matrix}
\]
\[
\begin{matrix}
\begin{matrix}
\includegraphics[scale=0.8, alt={twist knot}]{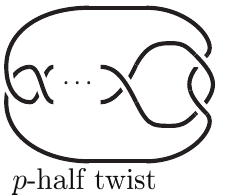}
\end{matrix}
& \qquad& 
\begin{matrix}
\includegraphics[scale=0.8, alt={double twist knot}]{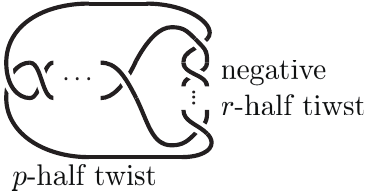}
\end{matrix}
\\
\text{twist knot $T_p$}
& &
\text{double twist knot $D_{p,r}$}
\end{matrix}
\]
\end{small}
\caption{Knots and links handled in this paper.  }
\label{fig:borromean}
\end{figure}
\begin{thm}
Let $K$ be a hyperbolic double twist knot. 
Then the following holds. 
\[
2\, \pi\, \lim_{N\to \infty}\frac{J_{N-1}(K)}{N}
=
\Vol(K) +\mathrm{CS}(K)\, \sqrt{-1}
\quad (\mathrm{mod}\  \pi^2 \sqrt{-1}\,\mathbb{Z}).
\]
\end{thm}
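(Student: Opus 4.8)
The plan is to write the normalized colored Jones polynomial $J_{N-1}(D_{p,r})$ as an explicit state sum built from quantum $6j$ symbols, to extract its large-$N$ asymptotics by the saddle point method, and to use the complexified tetrahedron — together with its associated $\mathrm{SL}(2,\mathbb{C})$ representation — to identify the dominant critical point with the holonomy of the complete hyperbolic structure on $S^{3}\setminus D_{p,r}$ and the critical value with the complex volume. Concretely, I would first present $D_{p,r}$ as the closure of a tangle with two twist regions ($p$ and $r$ full twists); colouring each strand by the $N$-dimensional irreducible representation of $\mathcal{U}_q(sl_2)$ and fusing the parallel strands in each twist region, the Reshetikhin--Turaev invariant collapses onto a trivalent graph of theta type, in which the twist regions become diagonal operators with eigenvalues powers of $q$ and the fusion vertices contribute a quantum $6j$ symbol together with theta-net normalisations. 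This yields a closed formula
\[
J_{N-1}(D_{p,r})=\sum_{\mathbf{k}}\ (\text{twist and writhe factors})\cdot(\text{$6j$ symbol})_{\mathbf{k}}\cdot(\text{theta factors})_{\mathbf{k}},
\]
a finite sum over the admissible colour vector $\mathbf{k}$ labelling the internal edges of the graph.

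Next I would put $q=\exp(\pi\sqrt{-1}/N)$ and invoke the asymptotic expansion of the quantum $6j$ symbol recalled earlier — valid in the range where the colours force the associated tetrahedron to be the complexified one rather than a Euclidean or spherical one: each $6j$ symbol behaves, to leading exponential order, like $\exp\!\big(\tfrac{N}{2\pi\sqrt{-1}}\,\Phi\big)$, where $\Phi$ is assembled from the dilogarithm $\Li$ and equals the complex volume of the associated complexified tetrahedron. Collecting all factors (the twist and theta contributions being likewise exponential in $N$ after rescaling) into a single potential function $V$ of the rescaled colour variables $\mathbf{z}=2\pi\mathbf{k}/N$, one gets
\[
J_{N-1}(D_{p,r})\ \sim\ \sum_{\mathbf{k}}\ \exp\!\Big(\tfrac{N}{2\pi\sqrt{-1}}\,V(\mathbf{z})\Big)\times(\text{factor subexponential in }N).
\]

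I would then study the critical point equations $\nabla V=0$. These are multiplicative relations among the quantities $1-e^{\pm\sqrt{-1}z_j}$, i.e. precisely the edge-consistency (gluing) equations of the complexified tetrahedra appearing above. Rather than matching them directly to Thurston's gluing equations, I would pass through the $\mathrm{SL}(2,\mathbb{C})$ representation attached to the complexified tetrahedron: it furnishes a representation of $\pi_1(S^{3}\setminus D_{p,r})$ whose defining relations coincide with $\nabla V=0$, and one checks that there is a solution $\mathbf{z}^{\ast}$ realising the discrete faithful holonomy of the complete structure. At $\mathbf{z}^{\ast}$ a Schl\"{a}fli / Neumann--Zagier type identity gives $V(\mathbf{z}^{\ast})\equiv\sqrt{-1}\big(\Vol(D_{p,r})+\sqrt{-1}\,\CS(D_{p,r})\big)\pmod{\pi^{2}\mathbb{Z}}$; substituting into the saddle point formula $\tfrac1N\log J_{N-1}(D_{p,r})\sim\tfrac{1}{2\pi\sqrt{-1}}V(\mathbf{z}^{\ast})$ and multiplying by $2\pi$ yields the right-hand side of the theorem, the $\pi^{2}\sqrt{-1}\,\mathbb{Z}$ indeterminacy being exactly that of the Chern--Simons term.

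The decisive and most delicate part is turning the asymptotic relation of the second step into a rigorous estimate. One must show that $\re V$ attains its maximum over the summation range only at $\mathbf{z}^{\ast}$ — which typically requires deforming the summation contour off the real locus so as to pass through $\mathbf{z}^{\ast}$, while controlling $V$ and the discarded tails along the way — that the Hessian of $V$ at $\mathbf{z}^{\ast}$ is non-degenerate so that the localised Gaussian sum contributes only a subexponential factor, and that no competing critical point or boundary configuration has $\re V$ as large and no unforeseen cancellation occurs among the polynomially many remaining terms. A further, more routine, difficulty is the bookkeeping across the sign cases of $(p,r)$, since positive versus negative twists change the combinatorics of the $6j$ state sum and of the tetrahedral decomposition, together with the separate treatment of the finitely many non-hyperbolic members of the family (torus knots and the unknot), for which there is nothing to prove.
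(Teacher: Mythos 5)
Your overall strategy---a quantum $6j$-symbol state sum for $J_{N-1}(D_{p,r})$, a saddle point analysis of the resulting dilogarithm potential, identification of the saddle with the geometric $\mathrm{SL}(2,\mathbb{C})$ representation via the complexified tetrahedron, and a Neumann--Zagier argument equating the critical value with the complex volume---is exactly the route the paper takes. But there is a concrete gap at your very first step. At $q=e^{\pi i/N}$ with every strand coloured by the $N$-dimensional representation, the fusion of the two parallel strands in a twist region is degenerate: the quantum dimensions and theta-net normalisations you must divide by vanish (in the paper's notation the denominators $\{2Nk\}$ appearing in \eqref{eq:wlimit} are zero for integer $k$), so the ``closed formula'' in your first display is of the form $0/0$ and is not defined as written. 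Resolving this is not bookkeeping: the paper passes to the ADO invariant, perturbs the colours to non-half-integers $k+\varepsilon$, $l+\delta$, and takes the limit $\varepsilon,\delta\to 0$ using symmetry properties of the summand, with the upshot that $J_{N-1}(D_{p,r})$ is not a plain state sum but a second derivative $\frac{\partial^2}{\partial x\,\partial y}$ of one evaluated at integer colours (formula \eqref{eq:double}). This reformulation is what the paper itself singles out as its most complicated part, and the entire asymptotic analysis must then be run on the derivative formula (the derivative only affects subexponential prefactors, but that has to be established, not assumed).

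Second, you correctly identify the contour deformation as the decisive analytic step but leave it entirely open, which is precisely where earlier attempts at the volume conjecture for complicated knots stall. The paper discharges it by first applying the Poisson sum formula to convert the colour sum into an integral over $[0,1]^2$, and then verifying---by explicit contour plots for small $(p,r)$ combined with a continuity and index argument in Appendix E that tracks the unique singular point of the potential as $p$ and $r$ vary---that the integration region can be pushed onto the relevant saddle for all $p,r$ in the stated range, the finitely many excluded cases being covered by prior work of Ohtsuki et al. As a plan your proposal points in the right direction, but these two items are where the actual mathematical content of the theorem lies.
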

\begin{rmk}
The volume conjecture for hyperbolic knot with crossing number less than or equal to 7 are proved in \cite{O1}, \cite{OY} and \cite{O2}.  
That for the twist knot $T_p$ for $p\geq 6$ is proved in \cite{CZ}.  
\end{rmk}
\begin{rmk}
Combining the result in \cite{OT}, we get the following for any hyperbolic double twist simble component knot $K$.  
\[
J_{N-1}(K) \underset{N\to\infty}{\sim}e^{N \zeta(K)}\omega(K) \Big(1+ O(\frac{1}{N})\Big),
\]
where 
$\zeta(K) = \sqrt{-1}\left(\mathrm{Vol}(S^3\setminus K) +\sqrt{-1}\,\mathrm{CS}(S^3\setminus K)\right)$, 
$\omega(K) = \pm \frac{\tau(K)}{2 \sqrt{-1}}$ and $\tau(K)$ is the twisted Reidemeister torsion of $K$ associated with the geometric $\mathrm{SL}(2, \mathbb{C})$ representation of $\pi_1(S^3 \setminus K)$.  
\end{rmk}
The main tool is the complexified tetrahedron.  
Volume formulas of hyperbolic tetrahedrons are given in \cite{CK}, \cite{MY} in terms of dihedral angles at edges and in \cite{MU} in terms of edge lengths. 
The formulas in \cite{MY} and \cite{MU} are based on the volume conjecture for the quantum $6j$ symbol, and they are analytic functions on the parameters.  
These formulas are also work for truncated tetrahedra as shown in \cite{U} and for doubly truncated tetrahedra as in \cite{KM}.  
\begin{figure}[htb]
\[
\begin{matrix}
\begin{matrix}
\includegraphics[scale=0.8]{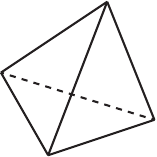}
\end{matrix}
& 
\begin{matrix}
\includegraphics[scale=0.8]{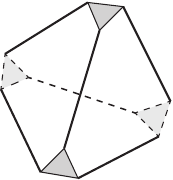}
\end{matrix}
& 
\begin{matrix}
\includegraphics[scale=0.9]{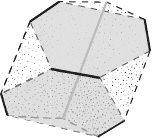}
\end{matrix}
\\
\text{usual tetrahedron}
& 
\text{truncated tetrahedron} 
& 
\text{doubly truncated tetrahedron}
\end{matrix}
\]
\caption{A usual tetrahedron, a truncated tetrahedron and a doubly truncated tetrahedron.
Any face which truncate a vertex is perpendicular to the original three faces of the tetrahedron which are adjacent to the vertex.}
\end{figure}
Here the length considered to be a real number and the angle considered to be a pure imaginary number.  
Now let us complexify these numbers of a truncated tetrahedron and a doubly truncated tetrahedron as in Figure \ref{fig:complexify}.  
\begin{figure}[htb]
\begin{align*}
\text{truncated edge:}&\quad
\begin{matrix}
\includegraphics[scale=0.9]{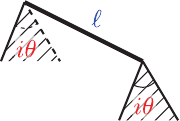}
\end{matrix}
\longrightarrow\ \ 
\begin{matrix}
\includegraphics[scale=0.9]{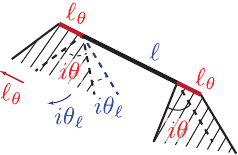}
\end{matrix}
\\
\text{doubly truncated edge:}&\quad
\begin{matrix}
\includegraphics[scale=0.9]{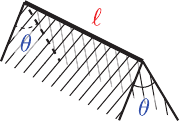}
\end{matrix}
\longrightarrow\ \ 
\begin{matrix}
\includegraphics[scale=0.9]{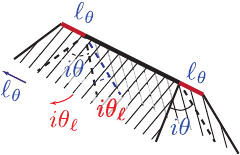}
\end{matrix}
\end{align*}
\caption{Complexify the angle and the length at an edge.  The parameter $i\theta$ is modified to $\ell_\theta + i\theta$ and the parameter $\ell$ is modified to $\ell + i \theta_\ell$.  
The shaded faces correspond to the truncated faces.}
\label{fig:complexify}
\end{figure}
The adjacent edges at an endpoint of the edge are rotated by $\theta_\ell$ and then faces (no more planner) glued at the edge are shifted by $\ell_\theta$.  
Then the angle parameter $i\theta$ is generalized to $\ell_\theta + i\theta$ and the length parameter $\ell$ is generalized to $\ell + i \theta_\ell$.  
After such deformation, the faces of the truncated tetrahedron is no more planner.  
But, by assigning elements of $\mathrm{PSL}(2, \mathbb{C})$ to the edges of the truncated tetrahedron, we can define the volume of such generalized tetrahedron by considering the fundamental domain of the action by such group elements. 
For the complexified tetrahedron, the Schl\"afli differential formula is generalized to the differential equation satisfied by the Neumann-Zagier function.  
\par
The difficulty for proving the volume conjecture is to check the condition for applying the saddle point method to the potential function obtained from  $J_{N-1}(K)$, which is a sum of terms  consisting of a product of quantum factorials and some powers of $q$.  
For the large $N$ case, this sum can be reformulated into an integral of the potential function, where the integral range corresponds to the range of the sum of $J_{N-1}(K)$.    
To apply the saddle point method, this integral range must be wide enough to surround the saddle point, which is very hard to show for complicated knots.  
Here we express the colored Jones polynomial of each double twist knot by using the quantum $6j$ symbol, and is expressed by parameters  assigned to the edges of the tetrahedral graph.   
In this expression, the range for sum is rather simple and it is not hard to see that we can apply the saddle point method.  
The edge parameters correspond to the saddle point are complex numbers, and the corresponding geometric object is the complexified tetrahedron.
The complement of the double twist knot is decomposed into a union of two copies of such complexified tetrahedron, while the expression of the colored Jones polynomial obtained from the quantum $R$ matrix corresponds to an ideal tetrahedral decomposition of the  complement.  
\par
The new idea of this article is to introduce the complexified tetrahedron which is constructed from the geometric $\mathrm{SL}(2, \mathbb{C})$ representation  of  the fundamental group of the complement.  
We also use the ADO invariant \cite{ADO}, \cite{CM} to investigate $J_{N-1}(K)$.  
For the  techniques to apply the saddle point method and the Poisson sum formula, we just follow the arguments developed in papers \cite{O1,O2, OY} to prove the volume conjecture for hyperbolic knots with small crossing numbers.   
\par
The paper organized as follows.  
In Section 1, we explain the volume conjecture for Borromean rings. 
In this case, volume conjecture is already solved, and here we reconsider it by using the expression of the  colored Jones invariant in terms of the quantum $6j$ symbol.  
In Section 2, we tread twisted Whitehead links.  
The volume conjecture is also solved for this case, but here reprove it by using the complexified tetrahedron and the quantum $6j$ symbol.  
For the twisted Whitehead link case, we  use a complexified tetrahedron which appears as a deformation of the regular ideal octahedron.  
In Section 3, the double twist knots are investigated.  
The method to prove the volume conjecture is same as for the twisted Whitehead links explained in Section 2.  
\par
Some notions and detailed computations are given in appendices.  
Especially, in Appendix B, colored Jones invariants are reformulated by using the ADO invariants.  
This part is the most complicated part of this paper, but the reformulation of the colored Jones polynomial explained here simplifies the rest of the proof of the volume conjecture.  
\medskip
\par\noindent
{\bf Acknowledgment.}
The author was strongly encouraged to pursuit this research when I attended  ``Winter School on
Low-dimensional
Topology
and Related Topics'' at IBS-CGP in Pohang, Korea in December 2023,     
and he would like to thank all the participants of the school, especially Jessica Purcell, Seonhwa Kim, Thiago de Paiva Souza, and the organizer Anderson Vera.  
He also would like to thank Anh Tran for giving me a lot of information about  $SL(2, \mathbb{C})$ representations of  the double twist knots and two-bridge knots.
\section{Borromean rings}
The volume conjecture for the Borromean rings is easily proved elementary, but here we recall the proof to see its corresponds to the $\mathrm{PSL}(2, \mathbb{C})$ representation of the fundamental group of the complement.  
Throughout this paper, $N$ is assumed to be an odd positive integer greater than or equal to $3$.  
\subsection{Representation matrix}
Let $B$ be the Borromean rings in Figure \ref{fig:borromean}.  
We first construct the parabolic $\mathrm{SL}(2, \mathbb{C})$ representation $\rho$ of $\pi_1(S^3\setminus B)$ which corresponds to the hyperbolic structure of $S^3 \setminus B$. 
In other words, let $\Gamma$ be the image of $\rho$, then $S^3 \setminus B$ is isomorphic to $\mathbb{H}^3/\Gamma$, where $\mathbb{H}^3$ is the hyperbolic three space.  
Here we use the upper half model, so $\mathbb{H}^3$ is identified with $\mathbb{C} \times \mathbb{R}_{>0}$ and $\partial \mathbb{H}^3$ is identified with $\mathbb{C}$.  
To assign elements of $\pi_1(S^3 \setminus B)$, we draw $B$ as in Figure \ref{fig:borromeangenerator} and assign the elements $g_1$, $\cdots$, $g_4$, $h_1$, $h_2$ as in the figure.   
\begin{figure}[htb]
\[
\includegraphics[scale=0.9]{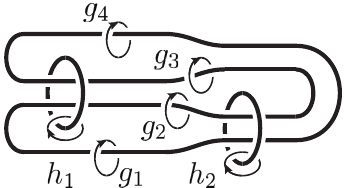}
\]
\caption{Elements of $\pi_1(S^3\setminus B)$.  
The base point is located above the plane.  
}
\label{fig:borromeangenerator}
\end{figure}
Then the relations of $\pi_1(S^3\setminus B)$ are given as follows.  
\begin{multline}
\pi_1(S^3\setminus B)
=
\\
\left<g_1, g_2, g_3, g_4, h_1, h_2
 \mid
g_2 = h_1\, g_1^{-1}\, h_1^{-1}, \ 
g_3= h_1 \, g_4^{-1}\, h_1^{-1}, \ 
g_3^{-1} = h_2 \, g_2 \, h_2^{-1}
\right>.
\label{eq:relationB}
\end{multline}
Now let us consider parabolic representation $\rho$.  
Let $g_{12} = g_1 \, g_2$ and 
$g_{23} = g_2\, g_3$. 
Since $h_1$ parabolic, $g_{23}$ is also parabolic.  
The eigenvalues of $\rho(g_i)$ are all $1$ or all $-1$.  
Recall that any parabolic matrix of $\mathrm{SL}(2, \mathbb{C})$ with eigenvalue $\mp 1$ is represented as $\pm\begin{pmatrix}
-1+\alpha\beta & \beta^2 \\ -\alpha^2 & -1-\alpha\beta
\end{pmatrix}$
for some complex numbers $\alpha$ and $\beta$.  
So, up to the conjugation, we can assign 
\[
\rho(g_1) = 
\varepsilon\!\begin{pmatrix}-1 & x \\ 0 & -1\end{pmatrix}, 
\ \ 
\rho(g_2) = 
\varepsilon\!\begin{pmatrix}-1+y & y \\ -y & -1-y\end{pmatrix}, 
\ \ 
\rho(g_3) = 
\varepsilon\!\begin{pmatrix}
-1 & 0 \\ -z & -1
\end{pmatrix},
\]
where $\varepsilon=\pm1$.
Since $h_1$ and $g_{23}$ are commutative and $\rho(h_1)$ is parabolic, $\rho(g_{23})$ must be parabolic with eigenvalue $1$ or $-1$.  
Hence  $\tr \rho(g_{23})$ must be $2$ or $-2$.  
On the other hand, $\tr \rho(g_{23}) = 2 - yz$, so if $\tr \rho(g_{23}) = 2$, then $y$ or $z$ is zero, which contradict  the assumption that the representation $\rho$ is non-abelian.  
Therefore, $yz= 4$ and $\tr \rho(g_{23})=-2$, which means that the eigenvalue of $\rho(g_{23})$ is $-1$.  
By this reason, we assume that the eigenvalues of $\rho(g_i)$ and $\rho(h_j)$ are all $-1$.   
Similar argument for $g_{12}$ and $h_2$ implies that $xy=4$.  
We also have $g_4 = (g_1\,g_2\,g_3)^{-1}$ and $\tr(g_4)=-2$ since we assume that the eigenvalue of $\rho(g_4)$ is $-1$.  
This means that $x y + x z + y z + x y z =x^2 + 4x + 16= 0$ and we get the following two solutions.
\begin{align}
x &= -2 + 2i, \quad y = -1 - i, \quad z = -2 + 2 i,
\label{eq:bsol1}
\\
x &= -2 - 2i, \quad y = -1+ i, \quad z = -2 - 2 i.
\label{eq:bsol2}
\end{align}
Choose the solution \eqref{eq:bsol2} for $\rho$ and let $p_i$, $p_{ij}$ be the fixed points of $\rho(g_i)$, $\rho(g_{ij})$ in $\mathbb{C}$.  
Then
\[
p_1 = \infty, \ \ p_2 = -1, \ \ p_3 = 0, \ \ p_4 = 1, 
\ \ p_{12} = i, \ \ p_{23} = -i,
\]
and these points are the vertices of a regular ideal octahedron $O_1$ in $\mathbb{H}^3$.  
The action of $\rho(g_1)$ to $\mathbb{C}$ is the translation by $2 + 2i$.  
Let $O_2$ be another regular ideal octahedron with vertices
\[
q_1 = \infty, \ \ q_2 = i, \ \ q_3 = 1+i, \ \ q_4 = 2+i, 
\ \ q_{12} = 1+2i, \ \ q_{23} = 1,
\]
then $O_1 \cup O_2$ is the fundamental domain of the action of $\mathrm{Im}\,\rho$.  
\begin{figure}[htb]
\[
\includegraphics[scale=0.8, alt={Two regular octahedron}]{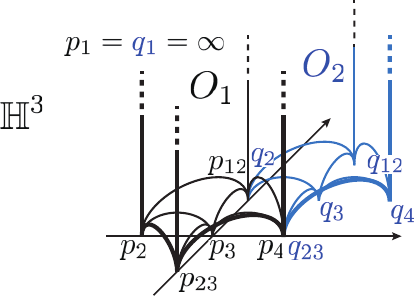}
\]
\caption{Regular ideal octahedra $O_1$, $O_2$ in the upper half space whose union is the fundamental domain of the action of $\rho(\pi_1(S^3\setminus B))$.  }
\label{fig:octahedron12}
\end{figure}
\subsection{Volume conjecture}
The colored Jones polynomial $J_{N-1}(B)$ is computed in Appendix A, and given by \eqref{eq:BJ},  
 that is the following.  
\begin{multline}
J_{N-1}(B) = 
\\
N^2\sum_{0 \leq k,l \leq N-1}
\sum_{\max(k,l) \leq s \leq \min(k+l, N-1)}\!\!
\dfrac{  \{s\}!^2}
{\{s-k\}!^2 \, \{s-l\}!^2 \, \{k+l-s\}!^2 },
\label{eq:borromeanJ}
\end{multline}
where 
\[
\{k\} = q^k - q^{-k}, \quad
\{k\}! = \{k\}\{k-1\}\cdots\{1\}\ \  \text{for $k \geq 1$} \ \ \text{and}\ \  \{0\}!=1.    
\]
\par
Now we prove the volume conjecture for $B$ by using  \eqref{eq:borromeanJ}.
The idea of proof is the same as that in \cite[Section 3.2]{MYo1}.    
The terms in the sum are all positive and the limit
$
2 \pi \lim_{N\to \infty} \frac{\log  J_{N-1}(B)}{N}
$
is given by the largest term in the sum.  
The maximal is attained at $k = l=\lfloor \frac{N-1}{2}\rfloor$ and $s = \lfloor \frac{3(N-1)}{4}\rfloor$ and the maximal value is
$2\left(-\Lambda(\frac{3\pi}{4})+7\Lambda(\frac{\pi}{4})\right) = 
16 \Lambda(\frac{\pi}{4}) = 7.3277...$, which is equal to the twice of the volume of the regular ideal octahedron and is equal to the volume of $S^3\setminus B$.  
Here $\Lambda(x)$ is the Lobachevsky function given by $\Lambda(x) = - \int_0^x \log | 2 \sin t |\, dt$.  
\subsection{Regular ideal octahedron}
The regular ideal octahedron 
\begin{figure}[htb]
\[
\begin{matrix}
\begin{matrix}
\includegraphics[scale=0.8]{tetrahedron2}
\end{matrix}
& & 
\begin{matrix}
\includegraphics[scale=0.98]{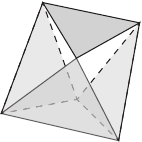}
\end{matrix}
\\
\text{truncated tetrahedron}
& & 
\text{regular ideal octahedron}
\end{matrix}
\]
\caption{Recular ideal octahedron is an extremal truncated tetrahedron.  
The faces have checkerboard coloring, and the white faces corresponds to the faces of the original tetrahedron, and the vertices corresponds to the edges of the original tetrahedron. }
\label{fig:octahedron}
\end{figure}
can be thought as an extremal case of the truncated tetrahedron whose dihedral angles at edges are all zero.  
In this case, the length of edges are also zero.  
\subsection{Variations of the Borromean rings}
Here we investigate the variations $B_{1}$ and $B_{1,1}$ of the Borromean rings $B$ in Figure \ref{fig:borromean}.  
%
Let $g_1$, $\cdots$, $g_4$, $h_1$, $h_2$ be the elements of $\pi_1(S^3 \setminus B_1)$ given in Figure \ref{fig:borromeangenerator1}.  
\begin{figure}[htb]
\[
B_1 : \ 
\begin{matrix}
\includegraphics[scale=0.8, alt={generators}]{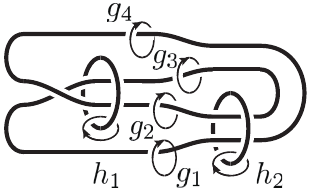}
\end{matrix}\ ,
\qquad
B_{1,1} : \ 
\begin{matrix}
\includegraphics[scale=0.8, alt={generators}]{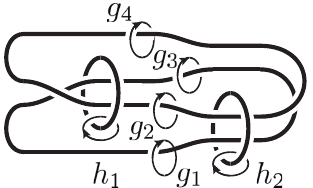}
\end{matrix}
\]
\caption{The elements $g_1$, $g_2$, $g_3$, $g_4$, $h_1$, $h_2$ in $\pi_1(S^3 \setminus B_1)$ and $\pi_1(S^3 \setminus B_{1,1})$.}
\label{fig:borromeangenerator1}
\end{figure}
The fundamental groups $\pi_1(S^3\setminus B_1)$ and $\pi_1(S^3\setminus B_{1,1})$ are presented by
\begin{align}
&\pi_1(S^3\setminus B_1)
=
\left<g_1, g_2, g_3, g_4, h_1, h_2\right.
 \mid
\label{eq:relationB1}
\\
& \qquad\qquad
\left.g_2 = h_1g_4^{-1}h_1^{-1}, \ 
g_3= h_1 g_4 g_1g_4^{-1}h_1^{-1}, \ 
g_3^{-1} = h_2 g_2 h_2^{-1}
\right>, 
\notag
\\
&\pi_1(S^3\setminus B_{1,1})
=
\left<g_1, g_2, g_3, g_4, h_1, h_2\right.
 \mid
\label{eq:relationB11}
\\
&\qquad
\left.g_2 = h_1g_4^{-1}h_1^{-1}, \ 
g_3= h_1 g_4 g_1g_4^{-1}h_1^{-1}, \ 
g_3^{-1} = h_2 g_2^{-1}g_1 g_2 h_2^{-1}
\right>.
\notag
\end{align}
Let $\rho'$, $\rho'_1$, $\rho'_{1,1}$ be the geometric $\mathrm{SL}(2, \mathbb{C})$ representations of $\pi_1(S^3\setminus B)$, $\pi_1(S^3\setminus B_1)$, $\pi_1(S^3\setminus B_{1, 1})$ respectively so that 
\begin{align*}
\rho'(g_{23}) &= \rho'_1(g_{23}) = \rho'_{1,1}(g_{23})
= 
\begin{pmatrix}
-1 & x \\ 0 & -1
\end{pmatrix}, 
\\
\rho'(g_{1})& = \rho'_1(g_{1}) = \rho'_{1,1}(g_{1})
= 
\begin{pmatrix}
-1+y & y \\ -y & -1-y
\end{pmatrix}, 
\\
\rho'(g_{2})& = \rho'_1(g_{2}) = \rho'_{1,1}(g_{2})
= 
\begin{pmatrix}
-1 & 0 \\ -z & -1
\end{pmatrix}. 
\end{align*}
Let $\tau$ be one of $\rho'$, $\rho'_1$, $\rho'_{1,1}$, then $\tau$ must satisufy
 $\tr \tau(g_2)=\tr \tau(g_3) =\tr\tau(g_4)= -2$, 
 and we get 
\[
x = 2i, \ \  y = -2i\ \  z = 2i, \quad \text{or}\quad
x = -2i, \ \  y = 2i, \ \  z = -2i
\]
for all $\rho'$, $\rho'_1$, $\rho'_{1,1}$.  
By choosing the first solution for $x$, $y$, $z$, 
the representation matrices for $h_1$ are 
given as follows from the relations \eqref{eq:relationB}, \eqref{eq:relationB1}, \eqref{eq:relationB11}.  
\begin{align*}
\rho'(h_1) = 
\begin{pmatrix}
-1 & -1 \\ 0 & -1
\end{pmatrix}, 
\quad
\rho'_1(h_1) = 
\rho'_{1,1}(h_1) = 
\begin{pmatrix}
-1 & -1+i \\ 0 & -1
\end{pmatrix}.  
\end{align*}
The fixed points $r_1$, $r_2$, $r_3$, $r_4$, $r_{23}$, $r_{12}$ of $g_1$, $g_2$, $g_3$, $g_4$, $g_{23}$, $g_{12}$ are given as follows.  
\[
r_1 = -1, \ r_2 = 0, \ r_3 = i, \ r_4 = -1+i,
\ r_{23} = \infty, \ r_{12} = \frac{-1+i}{2}.
\]
Let $O_1$ be the regular ideal octahedron with vertices $r_1$, $\cdots$, $r_4$, $r_{23}$, $r_{12}$, and let $O_2$ be that with vertices $s_1 = -1+i$, $s_2 = i$, $s_3= 2i$, $s_4 = -1+2i$, $s_{23} = \infty$, 
then $O_1 \cup O_2$ is the fundamental domain for the actions of $\pi_1(S^3\setminus B)$, $\pi_1(S^3\setminus B_1)$, $\pi_1(S^3\setminus B_{1, 1})$.  
By doing such computation for $h_2$ instead of $h_1$, we get the similar result.  
Here we get the same fundamental domain for the actons of the fundamental groups  $\pi_1(S^3\setminus B)$, $\pi_1(S^3 \setminus B_1)$, $\pi_1(S^3 \setminus B_{1,1})$.
However, the actions of $h_1$ and $h_2$ are different as in Figure \ref{fig:haction} while the actions of $g_1$, $\cdots$, $g_4$ coincide respectively for $B$, $B_1$ and $B_{1,1}$.    
\begin{figure}[htb]
\[
\begin{matrix}
\text{The action of $h_1$} : &\quad
\begin{matrix}
\includegraphics[scale=0.8]{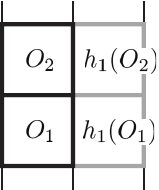}
\end{matrix},
 \ &\ 
\begin{matrix}
\includegraphics[scale=0.8]{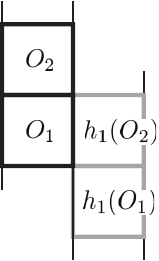}
\end{matrix},
 \ &\ 
\begin{matrix}
\includegraphics[scale=0.8]{cusph1B1}
\end{matrix},
\\
\text{The action of $h_2$} : &\quad
\begin{matrix}
\includegraphics[scale=0.8]{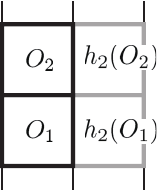}
\end{matrix},
 \ &\ 
\begin{matrix}
\includegraphics[scale=0.8]{cusph2B}
\end{matrix},
 \ &\ 
\begin{matrix}
\includegraphics[scale=0.8]{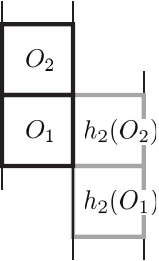}
\end{matrix},
\\
& B & B_1 & B_{1,1}
\end{matrix}
\]
\caption{The actions of $h_1$ and $h_2$ on the cusp diagrams of the  components corresponding to $h_1$ and $h_2$ respectively. }
\label{fig:haction}
\end{figure}
\par
The colored Jones polynomials of $B_1$ and $B_{1,1}$ are given by \eqref{eq:JonesB1}, \eqref{eq:JonesB11} as follows.  
\[
J_{N-1}(B_1) = 
N^2
q^{\frac{(N-1)^2}{4}}
\sum_{k, l = 0}^{N-1}
\sum_{m = \max(k,l)}^{\min(k+l, N-1)}\!\!
\dfrac{ q^{\left(k-\frac{N-1}{2}\right)^2} \{m\}!^2}
{\{m-k\}!^2 \, \{m-l\}!^2 \, \{k+l-m\}!^2 },
\]
\[
J_{N-1}(B_{1,1}) = 
N^2
q^{\frac{(N-1)^2}{2}}
\sum_{k, l =0}^{ N-1}
\sum_{m=\max(k,l)}^{ \min(k+l, N-1)}
\!\!\!\!\!
\dfrac{ q^{\left(k-\frac{N-1}{2}\right)^2} 
q^{\left(l-\frac{N-1}{2}\right)^2} \{m\}!^2}
{\{m-k\}!^2 \, \{m-l\}!^2 \, \{k+l-m\}!^2 }.
\]
These formulas have the phase factors $q^{\left(k-\frac{N-1}{2}\right)^2}$ and $q^{\left(k-\frac{N-1}{2}\right)^2}q^{\left(l-\frac{N-1}{2}\right)^2}$ added to $J_{N-1}(B)$,  and no more real numbers.  
For $J_{N-1}(B_1)$,  the term with $k =(N-1)/2$, $l = (N-1)/2$, $s = \lfloor3(N-1)/4\rfloor$ have the maximal modulus among the terms in the sums and the oscillation at $k =(N-1)/2$ is stopped, so we have
\[
\lim_{N\to\infty}\frac{2\pi}{N}\log J_{N-1}(B_1) 
=
\lim_{N\to\infty}\frac{2\pi}{N}\left(\log |J_{N-1}(B) | +\frac{ \pi N}{4} \sqrt{-1}\right).  
\]
Similarly, for $J_{N-1}(B_{1,1})$,  the term with $k =(N-1)/2$, $l = (N-1)/2$, $s = \lfloor3(N-1)/4\rfloor$ have the maximal modulus among the terms in the sums and the oscillation around $k =(N-1)/2$ and $l = (N-1)/2$ is very small, so we have
\[
\lim_{N\to\infty}\frac{2\pi}{N}\log J_{N-1}(B_{1,1}) 
=
\lim_{N\to\infty}\frac{2\pi}{N}\log J_{N-1}(B) .  
\]
The above rough argument can be replaced by a rigorous argument by using the Poisson sum formula and the saddle point method as in \cite{O1}.  
The hyperbolic volumes of the complements of $B_1$ and $B_{1,1}$ are equal to that of the complement of $B$ since these complements are both split into two regular ideal tetrahedrons.  
The Chern-Simons invariants are obtained from the imaginary of the complex volume by SnapPy,    and we get
$\CS(B) = \CS(B_{1,1})=0$, $\CS(B_1) = \pi^2/2$.  
Therefore, we have
\begin{thm}
Conjecture 2 holds for $B_1$ and $B_{1,1}$.  
\end{thm}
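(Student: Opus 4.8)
The plan is to read off everything from the formulas for $J_{N-1}(B_1)$ and $J_{N-1}(B_{1,1})$ displayed above, which differ from the expression \eqref{eq:borromeanJ} for $J_{N-1}(B)$ only by the Gaussian phase factors $q^{(k-\frac{N-1}{2})^2}$ — together with $q^{(l-\frac{N-1}{2})^2}$ in the $B_{1,1}$ case — inside the sum, and by the scalar prefactors $q^{(N-1)^2/4}$, respectively $q^{(N-1)^2/2}$. Since $q=\exp(\pi\sqrt{-1}/N)$ is a root of unity and, for odd $N$, all of the exponents $(k-\frac{N-1}{2})^2$, $(l-\frac{N-1}{2})^2$, $(N-1)^2/4$, $(N-1)^2/2$ are non-negative integers, every summand of $J_{N-1}(B_1)$ and of $J_{N-1}(B_{1,1})$ has exactly the same modulus as the corresponding (positive) summand of $J_{N-1}(B)$; the entire effect of the extra twists is thus a controlled oscillation of the terms together with an overall unimodular scalar.

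First I would dispose of the scalar prefactors. Using $\frac{(N-1)^2}{4N}=\frac N4-\frac12+\frac1{4N}$ and $\frac{(N-1)^2}{2N}=\frac N2-1+\frac1{2N}$,
\[
\frac{2\pi}{N}\log q^{(N-1)^2/4}\underset{N\to\infty}{\longrightarrow}\frac{\pi^2}{2}\sqrt{-1},
\qquad
\frac{2\pi}{N}\log q^{(N-1)^2/2}\underset{N\to\infty}{\longrightarrow}\pi^2\sqrt{-1}\equiv 0\ \ (\mathrm{mod}\ \pi^2\sqrt{-1}\,\mathbb{Z}),
\]
which already reproduces the values $\CS(B_1)=\pi^2/2$ and $\CS(B_{1,1})=0$ recorded above. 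The core step is to show that the sums themselves, with the Gaussian phases in place, have the same leading exponential order as $J_{N-1}(B)$ and that $\frac{2\pi}{N}\log$ of each tends to the \emph{real} number $\Vol(B)=16\Lambda(\pi/4)$, which equals $\Vol(B_1)=\Vol(B_{1,1})$ by the remarks above. For this I would follow the scheme of \cite{O1}: express the quantum factorials through the quantum dilogarithm, apply the Poisson summation formula to replace the sum by an integral of $e^{N\Phi(\,\cdot\,)}$ over the region of \eqref{eq:borromeanJ}, and run the saddle point method. In the rescaled variables $\kappa=k/N$, $\lambda=l/N$ the relevant potential is $\Phi_B+\pi\sqrt{-1}\,(\kappa-\tfrac12)^2$ for $B_1$ and $\Phi_B+\pi\sqrt{-1}\,(\kappa-\tfrac12)^2+\pi\sqrt{-1}\,(\lambda-\tfrac12)^2$ for $B_{1,1}$, where $\Phi_B$ is the potential of $J_{N-1}(B)$. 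The crucial observation is that the added quadratic is stationary and vanishes at $\kappa=\lambda=\tfrac12$, which is exactly where the critical point of $\Phi_B$ lies — the lattice point $k=l=\tfrac{N-1}{2}$, $s=\lfloor\tfrac{3(N-1)}{4}\rfloor$ found in the proof for $B$, corresponding geometrically to the two regular ideal octahedra of Figure \ref{fig:octahedron12}. Hence the critical point is not displaced, the critical value of the potential is unchanged, and the Gaussian integration transverse to the saddle contributes only a factor of order $N^{\pm1/2}$ times a bounded constant, so nothing to $\lim\frac{2\pi}{N}\log(\,\cdot\,)$. Combining this with the prefactor computation gives
\[
2\pi\lim_{N\to\infty}\frac{\log J_{N-1}(B_1)}{N}=\Vol(B)+\frac{\pi^2}{2}\sqrt{-1}=\Vol(B_1)+\CS(B_1)\sqrt{-1},
\]
and similarly $2\pi\lim_{N\to\infty}\frac{\log J_{N-1}(B_{1,1})}{N}\equiv\Vol(B_{1,1})+\CS(B_{1,1})\sqrt{-1}$ modulo $\pi^2\sqrt{-1}\,\mathbb{Z}$, which is Conjecture 2 for $B_1$ and $B_{1,1}$.

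The step I expect to be the main obstacle is the justification that the saddle point method actually applies: that the domain produced by the Poisson summation formula can be deformed onto a steepest descent contour through the geometric critical point along which $\re\Phi$ stays strictly below its value there, with no rival critical point of equal or larger real part. For $J_{N-1}(B)$ this is the elementary positivity-and-unimodal-peak statement used in the proof above. For $B_1$ and $B_{1,1}$ the perturbing term is purely imaginary on the real slice, so the real-axis estimates — and hence the location of the peak at $k=l=\tfrac{N-1}{2}$ — are untouched; off the real slice one must check that adding this fixed non-degenerate quadratic, whose critical point sits on the real slice, neither creates a new saddle of non-smaller $\re\Phi$ nor obstructs the descent. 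This reduces to a local completion of the square together with the compactness bounds already established in \cite{O1}, and it is carried out in exactly the way done for the twisted Whitehead links in Section 2; once those estimates are in place, the theorem follows.
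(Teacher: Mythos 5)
Your proposal follows essentially the same route as the paper: isolate the unimodular Gaussian prefactor (giving the $\tfrac{\pi^2}{2}\sqrt{-1}$ shift for $B_1$ and $\pi^2\sqrt{-1}\equiv 0$ for $B_{1,1}$), observe that the phase factors inside the sum are stationary and equal to $1$ at the dominant term $k=l=\tfrac{N-1}{2}$, $s=\lfloor\tfrac{3(N-1)}{4}\rfloor$ so the modulus asymptotics coincide with those of $J_{N-1}(B)$, defer the rigorous justification to the Poisson summation and saddle point machinery of \cite{O1}, and conclude using the equality of the three volumes and the recorded Chern--Simons values. This matches the paper's argument in both structure and external inputs, with your saddle-point phrasing merely making explicit what the paper summarizes as ``the oscillation is stopped.''
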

\section{Twisted Whitehead links}
In this section, we introduce the  complexified tetrahedron, which is a deformation of the regular hyperbolic octahedron, by using $\mathrm{SL}(2, \mathbb{C})$ representation of 
$\pi_1(S^3\setminus W_p)$ for the twisted Whitehead link $W_p$ with $|p| \geq 2$.  
Then we prove Conjecture 1 for $W_p$ with the help of the complexified tetrahedron, which is a deformation of the regular ideal octahedron used in the previous section.  
Conjecture 1 is already proved by \cite{Z}, and here we explain how the hyperbolic volume relates to the complexified tetrahedron, especially to its complexified length and angle, which corresponds to the eigenvalues of representation matrices of certain elements of $\pi_1(S^3\setminus W_p)$.   
Note that the Whitehead link $W$ is equal to $W_2$, and $W_{-2}$ is the mirror image of $W_2$,
We exclude $W_0$ and $W_{\pm1}$ since they are not hyperbolic.   
\subsection{Representation matrices}
\label{subsection:Wrep}
Assign the generators of $\pi_1(S^3\setminus W_p)$ as in Figure \ref{fig:whiteheadgenerator}.  
These generators satisfy the following relations.
\begin{equation}
\begin{aligned}
g_4 = h g_1 h^{-1}, \ \ 
g_3^{-1} = h g_2 h^{-1}, \ \ 
g_1 g_2 g_3 g_4 = 1, 
\qquad\qquad\qquad\qquad\qquad
\\
g_4^{-1} = 
(g_2 g_3)^{\frac{p}{2}}  g_3  (g_2 g_3)^{-\frac{p}{2}}, \ \ 
g_1^{-1} = 
(g_2 g_3)^{\frac{p}{2}}  
g_2 (g_2 g_3)^{-\frac{p}{2}},
\ \  \text{($p$ : even)}
\\
g_4 = h g_1 h^{-1}, \ \ 
g_3^{-1} = h g_2 h^{-1}, \ \ 
g_1 g_2 g_3 g_4 = 1, 
\qquad\qquad\qquad\qquad\qquad
\\
g_4^{-1} = 
(g_2 g_3)^{\frac{p-1}{2}}  g_2
(g_2 g_3)^{-\frac{p-1}{2}}, \ 
g_1^{-1} = 
(g_2 g_3)^{\frac{p+1}{2}} g_3 
(g_2 g_3)^{-\frac{p+1}{2}}.
\ \  \text{($p$ : odd)}
\end{aligned}
\label{eq:whiteheadrelation}
\end{equation}
\begin{figure}[htb]
\[
\begin{matrix}
\includegraphics[scale=0.8, alt={generators}]{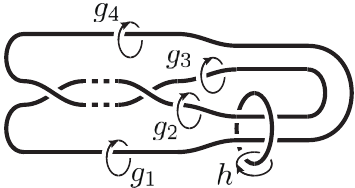}
\end{matrix}
\qquad\qquad
\begin{matrix}
\includegraphics[scale=0.8, alt={generators}]{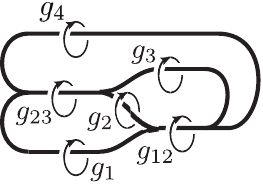}
\end{matrix}
\]
\caption{Generators of $\pi_1(S^3\setminus W_p)$ and related tetrahedral graph.  
The base point is located above the plane.  
}
\label{fig:whiteheadgenerator}
\end{figure}
Now we construct the geometric representation $\rho : \pi_1(S^3\setminus W) \to \mathrm{SL}(2, \mathbb{C})$.  
The matrices corresponding to the meridians are all parabolic.  
As in the case of the Borromean rings, we assume that the eigenvalues of $\rho(g_i)$ and $\rho(h)$ are $-1$.  
Let $g_{12}=g_1g_2$, $g_{23} = g_2g_3$.
For geometric representation, it is known that the matrix $\rho(g_{23})$ is diagonalizable.  
By applying conjugation, we may assume that $\rho(g_{23})$ is a diagonal matrix and an off-diagonal element of $\rho(g_1)$ is the minus of the other off-diagonal element of $\rho(g_1)$.  
Now we put
\begin{align*}
\rho(g_1) &= \begin{pmatrix}
-1 + x & x u \\ -x u^{-1} & -1-x
\end{pmatrix},\qquad
\rho(g_2) = \begin{pmatrix}
-1 + a_2b_2 & b_2^2 \\ -a_2^2 & -1-a_2b_2
\end{pmatrix},
\\
\rho(g_3) &= \begin{pmatrix}
-1 + a_3b_3 & b_3^2 \\ -a_3^2 & -1-a_3b_3
\end{pmatrix}, \quad
\rho(g_{23}) = \begin{pmatrix}
u & 0 \\ 0 & u^{-1}
\end{pmatrix}.
\end{align*}
Since $g_{12}$ commutes with the parabolic matrix $\rho(h)$ and $\rho$ is a non-abelian representation, we get  $\tr \rho(g_{12}) = -2$.  
From the relations 
\begin{align*}
&\tr \rho(g_{23}) =u+u^{-1}, \quad 
\text{$\rho(g_{23})$ is a diagonal matrix,}
\\&
\tr \rho(g_1g_2g_3) = \tr \rho(g_{12}) = -2,
\end{align*}
we get the following matrices.  
\begin{equation}
\begin{aligned}
\rho(g_1) &= 
\left(
\begin{array}{cc}
 -\frac{2 }{u+1} & \frac{u(u-1)}{u+1}
   \\
- \frac{u-1}{u(u+1)} & -\frac{2u}{u+1} \\
\end{array}
\right),
\ \ 
\rho(g_2) = 
\left(
\begin{array}{cc}
 -\frac{2u}{u+1} &\!\!\!
   \frac{u\left(\sqrt{u}-1\right)^3}
   {\left(\sqrt{u}+1\right) (u+1)} 
   \\
   -\frac{\left(\sqrt{u}+1\right)^3}{u\left(\sqrt{u}-1\right) (u+1)} & -\frac{2
   }{u+1} \\
\end{array}
\right),
\\
\rho(g_3) &= 
\left(
\begin{array}{cc}
 -\frac{2u}{u+1} &
   -\frac{\left(\sqrt{u}-1\right)^3}{\left(\sqrt{u}+1\right) (u+1)} \\
   \frac{\left(\sqrt{u}+1\right)^3} {\left(\sqrt{u}-1\right)  (u+1)} 
   & -\frac{2}{u+1} \\
\end{array}
\right)\!, \ \ 
\rho(g_4) = 
\left(
\begin{array}{cc}
 -\frac{2 }{u+1} & -\frac{u-1}{u+1} \\
 \frac{u-1}{u+1} &
   -\frac{2u}{u+1} \\
\end{array}
\right)\!.
\end{aligned}
\label{eq:whiteheadsolution1}
\end{equation}
Let $p_i$ be the fixed point of $\rho(g_i)$ for $i=1$, $2$, $3$, $4$ and $p_{12}$ be the fixed point of $g_{12}$.  
Moreover,  let $p_{23}^0$ and $p_{23}^1$ be the two fixed points of $\rho(g_{23})$. 
Since these fixed points are given by the ratios of the elements of the eigenvectors, 
we get
\begin{align*}
p_1 &= -u, \quad
p_2 = \frac{u\left(\sqrt{u}-1\right)^2
   }{\left(\sqrt{u}+1\right)^2}, \quad
p_3 = -\frac{\left(\sqrt{u}-1\right)^2}
   {\left(\sqrt{u}+1\right)^2}, \quad
p_4 = 1,\quad
\\
p_{12} &= \frac{\left(\sqrt{u}-1\right)
   \sqrt{u}}{\sqrt{u}+1}, \qquad
p_{23}^0 = 0, \qquad p_{23}^1 = \infty.
\end{align*}
By the relation \eqref{eq:whiteheadrelation}, we have
\[
{g_{23}}^{\frac{p}{2}} \cdot p_3 = p_4, \qquad
{g_{23}}^{\frac{p}{2}} \cdot p_2 = p_1,  \qquad \ \ 
\text{($p$ : even)}
\]
\[
{g_{23}}^{\frac{p-1}{2}} \cdot p_2 = p_4, \qquad
{g_{23}}^{\frac{p+1}{2}} \cdot p_3 = p_1.   \qquad 
\text{($p$ : odd)}
\]
Since $\rho(g_{23}) = \begin{pmatrix} u & 0 \\
0 & u^{-1} \end{pmatrix}$, 
$p_1 = -u p_4$, $p_2 = -u p_3$, 
we have
\[
(-u)^{p} p_3 = p_4, \qquad
(-u)^{p} p_2 = p_1.
\]
These two equations are equal to the following equation.  
\begin{equation}
-(-u)^{p} \frac{\left(\sqrt{u}-1\right)^2}
{\left(\sqrt{u}+1\right)^2} = 1.  
\label{eq:equationu}
\end{equation}
For the Whitehead link $W=W_2$, 
the above equation is
\[
(u+1) \left(u^2-2 u^{3/2}+2
   \sqrt{u}+1\right) = 0.
\]
The solutions are $u = 1.78615 \pm 2.27202 i$ and $u = -1$,   
where the first two solutions give the geometric representations.  
For generic $p$, there are many solutions for $u$ satisfying \eqref{eq:equationu}.
To find the geometric solution among these solutions, we  consider the  complexified tetrahedron and the developing map associated with this tetrahedron as in the following subsection.  
\subsection{Complexified terahedron}
Here we construct the complexified tetrahedron for a twisted Whitehead link with respect to $\rho(g_1)$, $\cdots$, $\rho(g_{23})$.  
At first, we assign the fixed points on the complex plane associated with $\rho(g_1)$, $\cdots$, $\rho(g_{23})$ as before.
\par
For the Whitehead link case with $u = 1.78615-2.27202i$, 
\begin{align*}
p_1 &= -1.786 + 2.272 i, \ \ 
p_2 = -0.2138 - 0.2720 i, \ \ 
p_3 = -0.0283 + 0.1163 i, \ \ 
\\
p_4 &= 1,\ \ \ 
p_{12} = -0.2571 + 0.5291 i, \ \ \ 
p_{23}^0 = 0, \ \ \  p_{23}^1 = \infty.
\end{align*}
Here we see that $p_3 = -u\, p_2$, $p_4 = -u\, p_1$, $p_2 = u^2\, p_1$ and $p_3 = u^2\, p_4$ as in Figure \ref{fig:uaction}.  
\begin{figure}[htb]
\[
\begin{matrix}
\includegraphics[scale=0.8, alt={action of $-u$}]{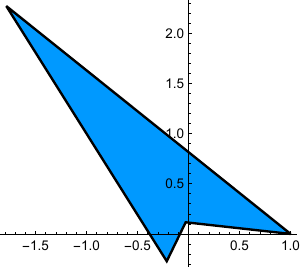}
\end{matrix}
\qquad\qquad
\begin{matrix}
\includegraphics[scale=1, alt={action of $(-u)^2$}]{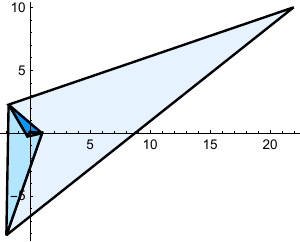}
\end{matrix}
\]
\caption{The action of $-u$ and $u^2$ to the quadrilateral $p_1p_2p_3p_4$.}
\label{fig:uaction}
\end{figure}
Let $p_i'$ be the point on the line $p_{23}^0p_{23}^1$ such that the geodesic line $p_ip_i'$ is perpendicular to $p_{23}^0p_{23}^1$.  
Then construct four geodesic triangles $F_j$ whose vertices are $p_{12}$, $p_j$, $p_{j+1}$ for $j=1$, $2$, $3$, $4$.  
Here $j+1$ means $j+1 \mod 4$.  
Now we choose two surfaces $F_5$, $F_8$ where the boundary of $F_1$ is  $p_1p_1'\cup p_1'p_2'\cup p_{2}'p_{2} \cup p_{1}p_1$ and the boundary of $F_2$ is $p_4p_4'\cup p_4'p_1' \cup p_1'p_1 \cup p_1p_4$.  
Let $\rho(g_{23})^{1/2} = 
\begin{pmatrix}
i u^{1/2} & 0 \\ 0 & -i u^{-1/2}
\end{pmatrix}$.  
Let $F_6 = \rho(g_{23})\, F_8$ and $F_7 = \rho(g_{23})^{1/2}\, F_5$.  
Now we introduce the complexified tetrahedra $T$, which is the hyperbolic solid surrounded by $F_1$, $\cdots$, $F_8$.  
The surfaces $F_2$, $F_4$, $F_5$, $F_7$ correspond to the faces and the surfaces $F_1$, $F_3$, $F_6$, $F_8$ shaded in Figure \ref{fig:complexifiedtetrahedron} correspond to the vertices of the tetrahedral graph in Figure \ref{fig:whiteheadgenerator}.  
The solid $T$  is considered to be a deformation of the regular ideal octahedron.
There are many ways to take $F_5$ and $F_8$, and here we choose them so that $T \cup \rho(g_{23})^{1/2} T$ is a fundamental domain of the action of $\rho(\pi_1(S^3\setminus W))$.  
\begin{figure}[htb]
\[
\hspace{-5mm}
\begin{matrix}
\includegraphics[scale=0.8, alt={complexified tetrahedron}]{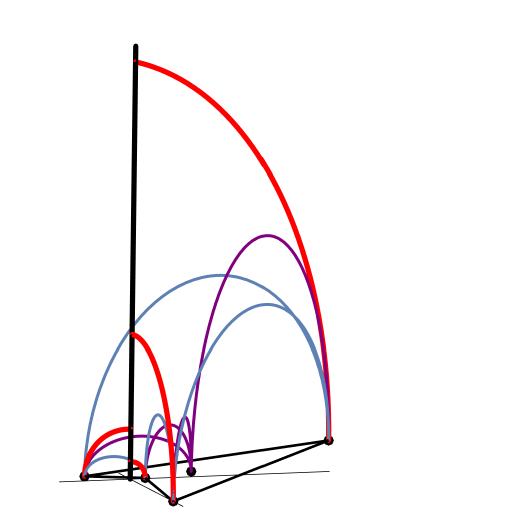}
\end{matrix}
\hspace{-65mm}
\mathbb{H}^3
\hspace{40mm}
\begin{matrix}
\includegraphics[scale=0.8]{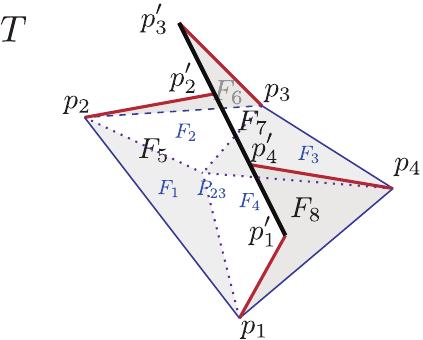}
\\
\text{The view from the edge corresponding to $g_{12}$.}
\end{matrix}
\]
\caption{Complexified tetrahedron $T$.}
\label{fig:complexifiedtetrahedron}
\end{figure}
For general $p$, there are two solutions of \eqref{eq:equationu} corresponding to the geometric representation. 
They are solutions satisfying
\[
p \arg (-u)+\arg(p_3) =2\pi i, \quad p \arg (-u)+\arg(p_3) =-2\pi i.
\]
For these solutions, $\pi_1(S^3 \setminus W_p)(T \cup \rho(g_{23})^{1/2}T)$ covers the hyperbolic space $\mathbb{H}^3$ evenly.  
\subsection{Poisson sum formula}
From now on, we prove the volume conjecture for $W_p$.  
The colored Jones polynomial $J_N(W_p)$ is given in \eqref{eq:whitehead1} as follows.  
\begin{multline*}
J_{N-1}(W_p)
=
\\
N \frac{q^{p\frac{(N-1)^2}{4}}
}{4\pi i}
 \sum_{k=0}^{N-1}
 \frac{d}{dx}\!\!
 \left(
q^{p(x-\frac{N-1}{2})^2}
\{2x+1\}
\hfill
\left.
\sum_{l=0}^{N-1}
\sum_{s - \frac{x-k}{2} = \max(k,l)}^{\min(k+l, N-1)}
\!\!\!\!
\xi_N(x, l, s)\!\right)\right|_{x=k},
\end{multline*}
where $\xi_N(k, l, s) = \dfrac{  \{s\}!^2}
{\{s-k\}!^2 \, \{s-l\}!^2 \, \{k+l-s\}!^2 }$.
The function $\xi_N(x, l, s)$ is real valued and 
it takes the maximal at $s_0$ given in \eqref{eq:saddles} and $l = \frac{N-1}{2}$.  
Hence
\[
J_{N-1}(W_p)
=
N \frac{q^{p\frac{(N-1)^2}{4}}
}{4\pi i}
 \sum_{k=0}^{N-1}
 \left.
 \frac{d}{dx}
 \Big(
q^{p(x-\frac{N-1}{2})^2}
\{2x+1\}
D_N
\xi_N(x, \tfrac{N-1}{2}, s_0)\!\Big)\!\right|_{x=k}
\]
where $D_N$ is a constant with polynomial growth and 
\[
s_0 = \frac{N}{2 \pi i} \log w_0, \ \  
w_0 = \frac{(u +1)(v+1) - \sqrt{(u +1)^2(v+1)^2 - 16 u v}}{4}= -\sqrt{u},\]
\[u = q^{2x+1}, \qquad v = q^{2l+1}=-1
\]
 as shown in Appendix C.  
Let $N\alpha = x+ \frac{1}{2}$, $N\gamma_0=s_0+\frac{1}{2}$ and
\begin{multline*}
\Psi_W(\alpha) = 
- 4\pi^2 \big(
 \gamma_0^2 - 2( \alpha +\tfrac{1}{2})\gamma_0 +  \alpha^2 + \tfrac{1}{2} \alpha + \tfrac{1}{4}
\big)
\\
-2\Li(e^{2\pi i \gamma_0})
+2\Li(e^{2 \pi i (\gamma_0-\alpha)})+
2\Li(-e^{2 \pi i\gamma_0})
+2\Li(-e^{2 \pi i (\alpha-\gamma_0)})
-\frac{2\pi^2}{3}
.
\end{multline*}
Then
\begin{multline*}
J_{N-1}(W_p)
=
\\E_N  q^{p\frac{(N-1)^2}{4}}
 \sum_{k=0}^{N-1}
 \left.\!\!
 \frac{d}{d\alpha}
\{2N\alpha\}
\exp\Big(\!\tfrac{N}{2\pi i}\big(
-2\pi^2 p(\alpha-\tfrac{1}{2})^2\! +\! \Psi_W(\alpha)\big)\!\Big)\right|_{\alpha=\frac{2k+1}{2N}=\alpha}
,
\end{multline*}
where $E_N$ is a constant  which grows at most  polynomially with respect to $N$.  
\par
To see the asymptotics of $J_{N-1}(W_p)$, 
we use the Poisson sum formula.  
Let $f$ be a rapidly decreasing function, 
then
\[
\sum_{k\in \mathbb{Z}} f(k) = \sum_{k \in \mathbb{Z}} \hat{f}(k)
\]
where $\hat{f}$ is the Fourier transform of $f$ given by
\[
\hat{f}(x) = \int_{\mathbb{R}} e^{-2\pi i k t} f(t) \, dt.
\]
To apply this to the parameter $l$, we extend the function $\Psi_W$ by $0$ for $\alpha\leq0$ and $\alpha \geq 1$.  
Then
\begin{small}
\begin{multline*}
J_{N-1}(W_p)
=E_N \, q^{p\frac{(N-1)^2}{4}}
\,\times
\\
 \left.\sum_{k\in \mathbb{Z}}
 \int_0^N
e^{-2\pi i k t}
  \frac{d}{d\alpha}
\{2N\alpha\}
\exp\Big(\tfrac{N}{2\pi i}\big(
-2\pi^2 p(\alpha-\tfrac{1}{2})^2 + \Psi_W(\alpha))\Big)\right|_{\alpha=\frac{2t+1}{2N}}
\!\!dt 
=
\\
N E_N \, q^{p\frac{(N-1)^2}{4}}
\,\times\hfill
\\
 \sum_{k\in \mathbb{Z}}
 \int_0^1
 e^{-2 \pi i k N\alpha+\pi i k}
 \frac{d}{d\alpha}
\{2N\alpha\}
\exp\Big(\tfrac{N}{2\pi i}\big(
-2\pi^2 p(\alpha-\tfrac{1}{2})^2 + \Psi_W(\alpha))\Big)
d\alpha 
. 
    \end{multline*}
\end{small}
Now we apply integral by part and we get
\begin{multline*}
J_{N-1}(W_p)
=N E_N \, q^{p\frac{(N-1)^2}{4}}
\,\times
\\
 \left.\sum_{k\in \mathbb{Z}}
 (-1)^k
e^{-2\pi i N k \alpha}
\{2N\alpha\}
\exp\Big(\tfrac{N}{2\pi i}\big(
-2\pi^2 p(\alpha-\tfrac{1}{2})^2 + \Psi_W(\alpha))\Big)\right|_{-\infty}^\infty
-
\\
 E_N q^{p\frac{(N-1)^2}{4}}
\,\times\hfill
\\
  2 \pi i \sum_{k\neq0}
 (-1)^{k+1}
 k \!  \int_0^1\!
e^{-2\pi i N k \alpha}
\{2N\alpha\}
\exp\Big(\tfrac{N}{2\pi i}\big(
    -2\pi^2 p(\alpha-\tfrac{1}{2})^2 + \Psi_W(\alpha))\!\Big)
d\alpha
\\
=
2   \pi i E_N \, q^{p\frac{(N-1)^2}{4}}
\,\times\hfill
\\
 \sum_{k\in \mathbb{Z}}
 (-1)^k k   \int_0^1
e^{-2\pi i N k \alpha}
\{2N\alpha\}
\exp\Big(\tfrac{N}{2\pi i}\big(
-2\pi^2 p(\alpha-\tfrac{1}{2})^2 + \Psi_W(\alpha))\Big)
d\alpha. 
\end{multline*}
Let
\begin{equation*}
\Phi_{W_p}(\alpha) = 
-2\pi^2 p(\alpha-\tfrac{1}{2})^2 + \Psi_W(\alpha). 
\end{equation*}
Then
\[
J_{N-1}(W_p) = 
E_N
 q^{p\frac{(N-1)^2}{4}}
\sum_{k \neq 0}
(-1)^kk
\int_0^1
e^{-2\pi i Nk \alpha}
\{2N\alpha\}
e^{\frac{N}{2 \pi i}\Phi_{W_p}(\alpha)}
d\alpha.
\]
In the rest,  we follow the method in \cite{O1}.  
Let 
\[
\Phi_{W_p}^+(\alpha) = \Phi_{W_p}(\alpha) - \frac{4\pi^2}{N} \alpha, \qquad
\Phi_{W_p}^-(\alpha) = \Phi_{W_p}(\alpha) + \frac{4\pi^2}{N} \alpha.
\]
Then we have
\begin{multline*}
J_{N-1}(W_p) = 
\\
E_N\,
 q^{p\frac{(N-1)^2}{4}}
\sum_{k \neq 0}
(-1)^kk
\int_0^1
e^{-2\pi i Nk \alpha}
\left(e^{\frac{N}{2 \pi i}\Phi_{W_p}^+(\alpha)}
-
e^{\frac{N}{2 \pi i}\Phi_{W_p}^-(\alpha)}\right)
d\alpha.
\end{multline*}
\subsection{Saddle point method}
Here we investigate 
\[
\lim_{N \to \infty} \frac{2 \pi}{N} \log
\int_{0} ^1 
e^{-2  \pi i Nk \alpha}
E'_N 
e^{N\Phi_{W_p}^\pm(\alpha)} \, d\alpha
\]
with the help of  the saddle point method.  
We first compute for $k=1$.  
Let $v_W$ be the hyperbolic volume of the complement of $W$.  
Choose a small positive $\delta$ so that 
$|\im\Phi_{W_p}^\pm(\alpha)| < {v_W}$ for $\alpha \in [0, \delta]$ and $[1-\delta, 1]$ and   
we devide the integral in the above formula  into three parts.
\begin{multline*}
\int_{0} ^1 
e^{-2  \pi i  N \alpha}
E'_N 
e^{\frac{N}{2\pi i}\Phi_{W_p}^\pm(\alpha)} \, d\alpha
=
\int_{0} ^{\delta}
e^{-2  \pi i  N \alpha}
E'_N 
e^{\frac{N}{2\pi i}\Phi_{W_p}^\pm(\alpha)} \, d\alpha
+
\\
\int_{\delta} ^{1-\delta} 
e^{-2  \pi i  N \alpha}
E'_N 
e^{\frac{N}{2\pi i}\Phi_{W_p}^\pm(\alpha)} \, d\alpha
+
\int_{1-\delta} ^1 
e^{-2  \pi i  N \alpha}
E'_N 
e^{\frac{N}{2\pi i}\Phi_{W_p}^\pm(\alpha)} \, d\alpha.  
\end{multline*}
Then, 
\[
\left|
\int_{0} ^{\delta}
e^{-2  \pi i  N \alpha}
E'_N 
e^{\frac{N}{2\pi i}\Phi_{W_p}^\pm(\alpha)} \, d\alpha
\right|
< E^{\prime\prime}_N e^{N \frac{v_W}{2\pi}}
\]
and 
\[
\left|
\int_{1-\delta} ^{1}
e^{-2  \pi i  N \alpha}
E'_N 
e^{\frac{N}{2\pi i}\Phi_{W_p}^\pm(\alpha)} \, d\alpha
\right|
< E^{\prime\prime}_N e^{N \frac{v_W}{2\pi}}
\]
for some factors $E_N^{\prime\prime}$ with polynomial growth.  
The remaining integral is estimated by the value at the saddle point, where the saddle point $\alpha_0$ is the point that the differential of $\Phi_{W_p}^{\pm}(\alpha)$ vanishes.  
\par
Now let us consider the Whitehead link case, i.e. $p=2$.  
Let $\alpha_0$ be the solution of
\[
\frac{1}{2\pi i}\frac{d}{d \alpha} \left(4\pi^2 \alpha + \Phi_{W_2}(\alpha) \right)=0. 
\]
By taking the exponential of this equation, we get
\begin{equation}
- \frac{(1+e^{2 \pi i \frac{\alpha+1}{2}})^2}
{(1 - e^{2 \pi i \frac{\alpha+1}{2}})^2}
e^{4 \pi i \frac{\alpha+1}{2}}
=
- \frac{(1-e^{ \pi i \alpha})^2}
{(1 + e^{ \pi i \alpha})^2}
e^{4 \pi i \alpha}
=
1.
\label{eq:equationalpha}
\end{equation}
Note that this equation is equal to \eqref{eq:equationu} by putting $u = e^{2 \pi i \alpha}$, and is an algebraic equation.  
So it has several solutions and they satisfy 
\[
\frac{1}{2\pi i}\frac{d}{d \alpha} \left( \Phi_{W_2}(\alpha) \right)=2 \pi i k.   \qquad (k \in \mathbb{Z})
\]
Then $\alpha_0$ is one of the solutions of \eqref{eq:equationalpha} satisfying
\[
\frac{1}{2\pi i}\frac{d}{d \alpha} \left( \Phi_{W_2}(\alpha_0) \right)=2 \pi i.  
\]
We actually have such solution $\alpha_0 = 0.856035... - 0.168907... i = \frac{1}{2 \pi i}\log(1-i + \sqrt{-1-2i})$.  
We can see this solution as the saddle point in
the contour graph of $\operatorname{Re} \Phi_{W_2}(\alpha)$  given in Figure \ref{fig:contourW}. 
\begin{figure}[htb]
\[
\raisebox{2cm}{$\begin{matrix}
\operatorname{Im} \alpha \\
\uparrow
\end{matrix} $}
\,
\begin{matrix}
\includegraphics[scale=0.35]{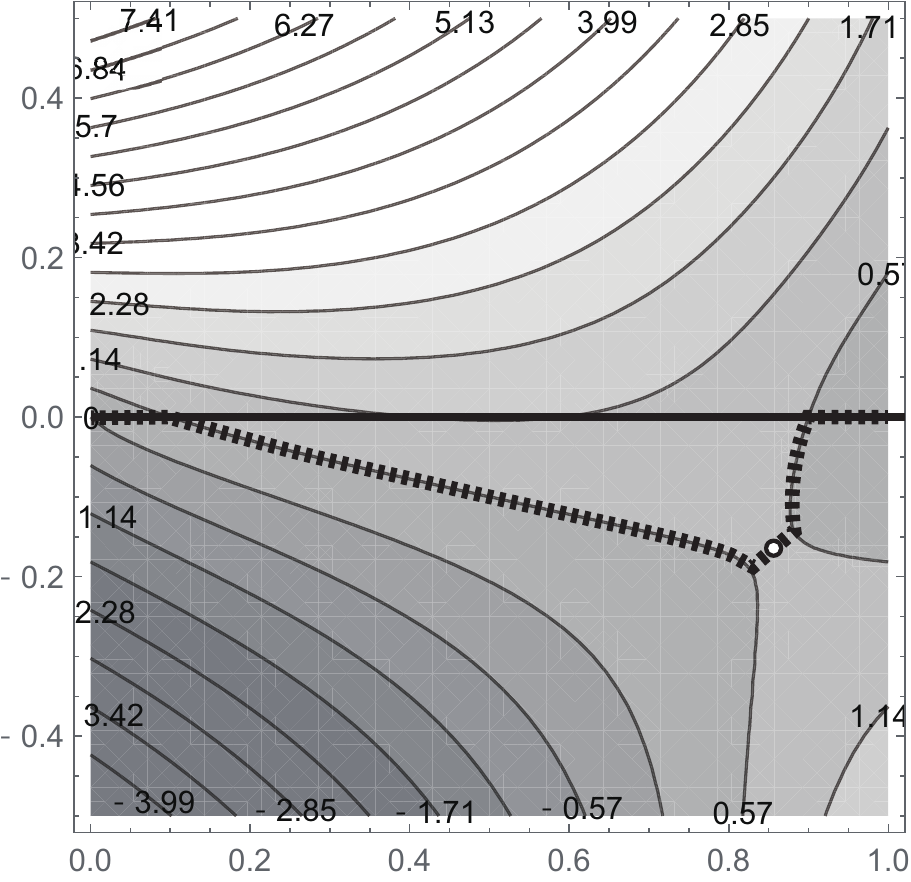}
\\
\qquad\qquad\qquad
\to \operatorname{Re} \alpha
\end{matrix}
\ 
\begin{tabular}{ll}
\small
Thick line : the original integral path
\\[5pt]\small
Dashed line : the deformed  path
\\[5pt]\small
$\circ$ : The saddle point
\end{tabular}
\]
\caption{Contour graph of $\mathrm{Re} \frac{1}{2\pi i}(4 \pi^2 \alpha+\Phi_{W_2}(\alpha))$.}
\label{fig:contourW}
\end{figure}
In this case, the end points $\alpha=0$ and $\alpha=1$ of the integral path
are located on the different regions of $\operatorname{Re}\frac{1}{2\pi i}\Phi_{W_2}(\alpha) \leq 0.57$ and we can apply the saddle point method by deforming the integral path to the dashed line in Figure \ref{fig:contourW}.  
Therefore, 
\[
\int_{\delta} ^{1-\delta} 
e^{-2  \pi i  N \alpha}
E'_N 
e^{N\Phi_{W_2}(\alpha)} \, d\alpha
\underset{N\to \infty}{\sim}
\frac{1}{2\pi i}
e^{\frac{N}{2\pi i}\left(4 \pi^2 \alpha_0 + \Phi_{W_2}(\alpha_0)\right)}. 
\]
Let $\alpha_0^\pm$ be the solution of
\[
\frac{d}{d \alpha}
\left(4\pi^2 \alpha +  \Phi_{W_2}^\pm(\alpha)\right) =0. 
\]
Let 
\[
c = \dfrac{\frac{1}{N}}{\frac{d^2}{d \alpha^2}
\left(4\pi^2 \alpha_0 +  \Phi_{W_2}^\pm(\alpha_0)\right)}.
\]
Then 
$\alpha_0^\pm = \alpha_0 \pm \frac{c}{N} + O(\frac{1}{N^2})$ and
\begin{multline*}
\int_{\delta} ^{1-\delta} 
e^{-2  \pi i  N \alpha}
D_N (\alpha)
e^{\frac{N}{2 \pi i}\Phi_{W_2}^\pm(\alpha)} \, d\alpha
\underset{N\to \infty}{\sim}
D_N(\alpha_0)
e^{\frac{N}{2 \pi i}\left(4 \pi^2 \alpha_0^\pm + \Phi_{W_2}^\pm(\alpha_0^\pm )\right)}
\\
=
D_N(\alpha_0)
e^{\frac{N}{2 \pi i}\left(4 \pi^2 (\alpha_0\pm \frac{c}{N}) + 
\Phi_{W_2}^\pm(\alpha_0\pm \frac{c}{N} 
+ O(\frac{1}{N^2}) )\right)}
\\
=
D_N(\alpha_0)
e^{\frac{N}{2 \pi i}\left(4 \pi^2 \alpha_0 + 
\Phi_{W_2}(\alpha_0 )\mp \frac{\alpha_0}{N} + O(\frac{1}{N^2}) \right)}.
\end{multline*} 
Therefore,
\begin{multline*}
\lim_{N \to \infty} \frac{2 \pi}{N}\log |J_{N-1}(W_2)|
=
\\
=\lim_{N \to \infty} \frac{2 \pi}{N}
\log 
\left|
(e^{-\frac{\alpha_0}{2\pi i}} - e^{\frac{\alpha_0}{2\pi i}})
D_N(\alpha_0)
e^{\frac{N}{2 \pi i}\left(4\pi^2 \alpha_0 + 
\Phi_{W_2}(\alpha_0 ) + O(\frac{1}{N^2}) \right)}
\right|
\\
=
 \operatorname{Im}\big(4\pi^2 \alpha_0 +\Phi_{W_2}(\alpha_0)\big).  
\end{multline*}
\par
For  $p > 2$, the contour graph is similar to the case $p=2$ and we can apply the similar  argument to get
\[
\lim_{N \to \infty} \frac{2 \pi}{N}\log |J_{N-1}(W_p)|
=
 \operatorname{Im}\big(4\pi^2 \alpha_0 + \Phi_{W_p}(\alpha_0^{(p)})\big),
\]
where $\alpha_0^{(p)}$ is the solution of 
\begin{equation}
\frac{d}{d \alpha}\left(
4\pi^2 \alpha+ \Phi_{W_p}(\alpha)\right) =0. 
\label{eq:equationalphaplog}
\end{equation}
For positive $p$, $1/2 < \re \alpha_0^{(p)} < 1$ and so
\[
e^{\gamma_0(\alpha_0^{(p)}, 1/2)} = e^{2 \pi i(\alpha_0^{(p)}+1)/2} = -e^{\pi i \alpha_0^{(p)}} = \sqrt{e^{2 \pi i \alpha_0^{(p)}}}.
\]  
By taking the exponential of the equation \eqref{eq:equationalphaplog}, we see that $\alpha_0^{(p)}$ is a solution of 
\begin{equation}
- \frac{(1-e^{ \pi i \alpha})^2}
{(1 + e^{ \pi i \alpha})^2}
(-e^{2 \pi i \alpha})^p
=
1
\label{eq:equationalphap}
\end{equation}
 satisfying \eqref{eq:equationalphaplog}.  
For such $\alpha_0^{(p)}$, the value $\mathrm{Im}\Phi_{W_p}(\alpha_0^{(p)})$ satisfies 
\[
\mathrm{Im}(4 \pi^2 \alpha_0 + \Phi_{W_2}(\alpha_0) )< \mathrm{Im}(4 \pi^2 \alpha_0^{(p)} +\Phi_{W_p}(\alpha_0^{(p)})) < v_B
\]
where $v_B$ is the hyperbolic volume of the complement of the borromean rings $B$, 
 and the condition to apply the saddle point method is also fulfilled.  
Actually, the contour graph for $p=5$, $20$ is given in Figure \ref{fig:contourW520}.  
If $p$ becomes large, then the term $\re (2 \pi i p (\alpha-\frac{1}{2})^2)$ becomes dominant.  
\begin{figure}[htb]
\[
\begin{matrix}
\begin{matrix}
\includegraphics[scale=1]{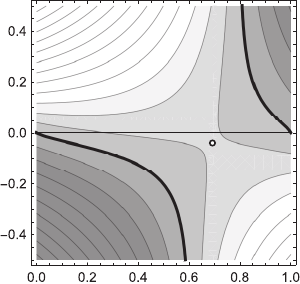}
\end{matrix}
& \qquad & 
\begin{matrix}
\includegraphics[scale=1]{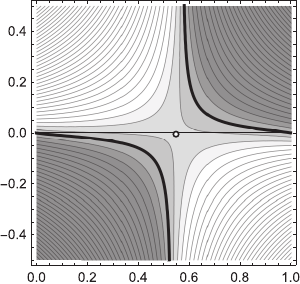}
\end{matrix}
\\
p = 5 & & p=20
\end{matrix}
\]
\caption{The contour graph of $\operatorname{Re}\frac{1}{2 \pi i}\big(4 \pi^2 \alpha +  \Phi_{W_p}(\alpha)\big)$ for $p=5$ and $20$. The thick contour indicates level $0$ and other contours represent integer levels and the small circles represent the saddle points.  }
\label{fig:contourW520}
\end{figure}
%
%
The saddle points $\alpha_0^{(p)}$ for $2 \leq |p|\leq 100$ are given in Figure \ref{fig:saddleW}.  
\begin{figure}[htb]
\[
\raisebox{1.5cm}
{
$\begin{matrix} \operatorname{Im}\alpha\\
\uparrow
\end{matrix}$
}
\includegraphics[scale=1.25]{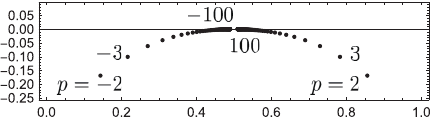}
\ \ \to \operatorname{Re}\alpha
\]
\caption{Saddle points $\alpha_0^{(p)}$ for $|p|\geq 2$.}
\label{fig:saddleW}
\end{figure}
\par
The contribution of the term $k=-1$ is the same as $k=1$ term.  
\par
We have to check the contribution  of the term  $k$  with $|k|\geq 2$ is negrigible.  
In such case, the saddle point moves and the imaginary  part of the value at the saddle point is smaller than $v_{W_p}$.  
If $|k|$ is sufficiently large, then there is no saddle points and the integral path can be moved to the path on which the imaginary part of the value is 0 as for Figure \ref{fig:klarge}.  
\begin{figure}[htb]
\[
\begin{matrix}
\includegraphics[scale=0.7]{contourW5_1}
\\
p=5, \ k=1
\end{matrix}
\qquad
\begin{matrix}
\includegraphics[scale=0.7]{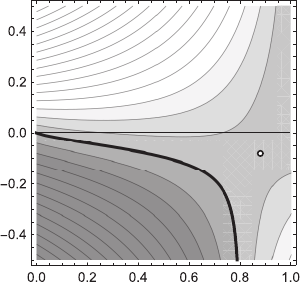}
\\
p=5, \ k=2
\end{matrix}
\qquad
\begin{matrix}
\includegraphics[scale=0.7]{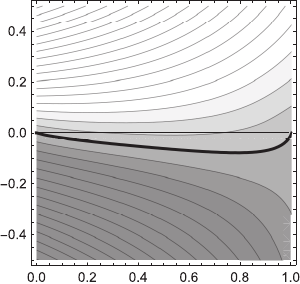}
\\
p=5, \ k=3
\end{matrix}
\]
\caption{The contour graph of $\operatorname{Re}\frac{1}{2 \pi i}\big(4 k\pi^2 \alpha +  \Phi_{W_p}(\alpha)\big)$ for $p=5$ and $k=1$, $2$, $3$.  
The thick lines are contours of level $0$. }
\label{fig:klarge}
\end{figure}
%
%
%
%
%
%
%
%
%
%
%
\subsection{Volume of the complement}
Here we show the following.
\begin{thm}
The value
 $\frac{1}{i} \big(4 \pi^2 \alpha_0^{(p)} +
 \Phi_{W_p}(\alpha_0^{(p)})\big)$ 
 is equal to the complex volume of the complement of the twisted Whitehead link  $W_p$,  
\end{thm}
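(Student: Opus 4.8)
The plan is to show that $4\pi^{2}\alpha_{0}^{(p)}+\Phi_{W_p}(\alpha_{0}^{(p)})$ equals $\sqrt{-1}$ times the complex volume $\Vol(W_p)+\sqrt{-1}\,\CS(W_p)$ of $S^{3}\setminus W_p$ (modulo $\pi^{2}\sqrt{-1}\,\mathbb Z$), which gives the statement because $\tfrac1i=-\sqrt{-1}$. The real part of $\tfrac1i\big(4\pi^{2}\alpha_{0}^{(p)}+\Phi_{W_p}(\alpha_{0}^{(p)})\big)$, i.e.\ $\im\big(4\pi^{2}\alpha_{0}^{(p)}+\Phi_{W_p}(\alpha_{0}^{(p)})\big)$, was already identified with $\Vol(W_p)$ in the previous subsection via $\lim_{N\to\infty}\tfrac{2\pi}{N}\log|J_{N-1}(W_p)|$, so the real content is to pin down the whole complex number. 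I would do this in two parts: first, that the saddle point $\alpha_{0}^{(p)}$ encodes the complete hyperbolic structure through the complexified tetrahedron of Subsection~2.2; second, that the critical value of the potential is the complex volume of that structure.

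For the first part, exponentiating the critical point equation \eqref{eq:equationalphaplog} gives \eqref{eq:equationalphap}, which under $u=e^{2\pi\sqrt{-1}\,\alpha}$ is precisely the hyperbolicity equation \eqref{eq:equationu}; requiring in addition that $\alpha_{0}^{(p)}$ solve \eqref{eq:equationalphaplog} itself (not merely its exponential) and that $\tfrac12<\re\alpha_{0}^{(p)}<1$ then selects the geometric solution, so that $u=e^{2\pi\sqrt{-1}\,\alpha_{0}^{(p)}}$ is the eigenvalue parameter of $\rho(g_{23})$ for the geometric representation $\rho$ of Subsection~\ref{subsection:Wrep}. Hence $\alpha_{0}^{(p)}$ corresponds to the complexified tetrahedron $T=T(u)$ cut out by $F_{1},\dots,F_{8}$, and, as shown in Subsection~2.2, $T\cup\rho(g_{23})^{1/2}T$ is a fundamental domain for $\rho(\pi_{1}(S^{3}\setminus W_p))$; the two pieces are isometric, so the complex volume of $S^{3}\setminus W_p$ is twice the complex volume of $T$, the latter being defined through the developing map / fundamental domain as in \cite{U,KM} (which records $\Vol+\sqrt{-1}\,\CS$ through the extended Bloch group, not merely the volume).

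For the second part, I would invoke the explicit dilogarithm formula for the complex volume of a complexified tetrahedron established in \cite{KM} (and in \cite{U} for truncated tetrahedra), built on the volume conjecture for the quantum $6j$ symbol of \cite{MY,MU}; its differential is the generalized Schl\"afli (Neumann--Zagier) formula recalled in the Introduction. Substituting the edge data of $T(u)$ for the geometric $u=e^{2\pi\sqrt{-1}\,\alpha_{0}^{(p)}}$ — these being the $q$-power analogues of the saddle coordinates occurring in $\Psi_W$ — one checks by direct comparison that twice this formula (the factor $2$ for the two copies $T$ and $\rho(g_{23})^{1/2}T$) equals $\tfrac1i\big(4\pi^{2}\alpha_{0}^{(p)}+\Phi_{W_p}(\alpha_{0}^{(p)})\big)$: the four dilogarithm terms of $\Psi_W$ reproduce the shape contributions of $T$, while the quadratic piece $-2\pi^{2}p(\alpha_{0}^{(p)}-\tfrac12)^{2}$ is exactly the contribution of the $p$ half-twists to the complexified edge-length at the edge labelled $g_{23}$. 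By additivity of the complex volume over the decomposition $S^{3}\setminus W_p=T\cup\rho(g_{23})^{1/2}T$, this common value is $\Vol(W_p)+\sqrt{-1}\,\CS(W_p)$ modulo $\pi^{2}\sqrt{-1}\,\mathbb Z$. As a sanity check one can anchor the computation at $p=2$, where the right-hand side is the complex volume of the Whitehead link, and use the Borromean-rings value $16\Lambda(\tfrac{\pi}{4})$ of Section~1 to control the $p\to\infty$ limit.

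The hard part will be the bookkeeping in the second part, in particular the control of branches: $\Vol$ is insensitive to the choice of logarithm branches but $\CS$ is not, so to get equality modulo exactly $\pi^{2}\sqrt{-1}\,\mathbb Z$ (and to match SnapPy's $\CS(W_p)$) one must identify, term by term, which geometric quantity of $T(u)$ — an edge length, a dihedral angle, or a Rogers dilogarithm of a cross-ratio — each summand of $\Phi_{W_p}$ represents, and fix the flattenings consistently with the cusp equations and the $k=1$ branch used in the saddle point analysis. A clean way to organize this is to subdivide each copy of $T(u)$ into ordinary ideal tetrahedra (in the undeformed, octahedral limit $T$ breaks into four ideal tetrahedra), rewrite $\Phi_{W_p}$ as the resulting Neumann--Zagier potential with the flattening induced by that subdivision and by the framing factor $q^{p(N-1)^{2}/4}$, and then appeal to Neumann's theorem that the critical value of such a potential at the complete solution of the gluing equations equals $\sqrt{-1}$ times the complex volume; verifying the mutual consistency of the subdivision, the flattening, the gluing/cusp equations and the branch choice is the remaining technical point.
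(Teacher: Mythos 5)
Your first step --- identifying $u=e^{2\pi\sqrt{-1}\,\alpha_0^{(p)}}$ with the eigenvalue of $\rho(g_{23})$ for the geometric representation via the exponentiated saddle point equation \eqref{eq:equationalphap} versus \eqref{eq:equationu} --- is exactly the paper's starting point. From there, however, you take a genuinely different route, and the route you choose has a step that is asserted rather than available. You propose to evaluate the complex volume of the complexified tetrahedron $T(u)$ by ``the explicit dilogarithm formula for the complex volume of a complexified tetrahedron established in \cite{KM} (and in \cite{U})'' and then match it term by term against $\Phi_{W_p}(\alpha_0^{(p)})$. No such formula exists in those references: \cite{U} and \cite{KM} give \emph{volume} formulas for truncated and doubly truncated tetrahedra with real lengths and angles, not complex volumes (including the Chern--Simons part) of the complexified tetrahedra introduced in this paper. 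Likewise, additivity of $\Vol+\sqrt{-1}\,\CS$ over the decomposition $T\cup\rho(g_{23})^{1/2}T$ is not automatic: the $\CS$ term requires consistent flattening data, which is precisely the ``bookkeeping'' you defer. Your fallback --- subdivide into ideal tetrahedra and invoke Neumann's theorem for the resulting Neumann--Zagier potential --- is a legitimate strategy, but as written it is a plan rather than a proof; the identification of the quadratic term $-2\pi^2p(\alpha-\tfrac12)^2$ with ``the contribution of the $p$ half-twists'' and of the four dilogarithms with ``shape contributions of $T$'' is exactly what would have to be checked, branch by branch.

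The paper avoids all of this by never computing the volume of the tetrahedron directly. It realizes $W_p$ as $2/p$ (resp.\ $2/(p-1)$) Dehn surgery on a component of the Borromean rings $B$ (resp.\ $B_1$), verifies the single differential identity \eqref{eq:alphamu} saying that $\tfrac{d}{dl}\Phi_W=\tfrac12 m$ where $m$, $l$ are the meridian and longitude dilatations of the surgery cusp, and concludes that $H(m)=\Phi_W(\alpha)-\tfrac12 ml$ satisfies the Neumann--Zagier differential equation \eqref{eq:diffmu}. Anchoring at $m=0$ (where the value is the known complex volume of $S^3\setminus B$, computed in Section~1) and applying Yoshida's surgery formula \eqref{eq:NZ} together with the relation $m+\tfrac{p}{2}l=2\pi i$ then yields the claimed identity, with the branch and $\CS$ ambiguities absorbed into the surgery formula rather than tracked through a triangulation. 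If you want to salvage your approach, the concrete missing ingredient is a proof (not a citation) that $\Phi_{W_p}$, with a specified flattening, is a Neumann--Zagier potential for an ideal triangulation of $S^3\setminus W_p$ in the sense required by Neumann's theorem; absent that, the differential-equation-plus-surgery argument is the one that actually closes the gap.
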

 \begin{proof}
The key is the coincidence of  $e^{2 \pi i \alpha_0^{(p)}}$  with the eigenvalue $u$ of $\rho(g_{23})$ in \S\ref{subsection:Wrep}.  
To prove the theorem,  we compare $\zeta(\alpha, \frac{1}{2}, \frac{\alpha+1}{2})$ with the Neumann-Zagier potential function, which relates to the hyperbolic volume of the deformation of the  complement of the Borromean rings $B$ and its variation $B_1$.
For even $p$,   
 $W_p$ is obtained from the Borromean rings $B$ by the $2/p$ surgery along the component $C$ which corresponds to $h_1$ in Figure \ref{fig:borromeangenerator}, and for odd $p$, $W_p$ is obtained from $B_1$ by $2/(p-1)$ surgery.  
We deform the complement of $B$ by changing the cusp shape of $C$.  
\par
First we prove for positive even $p$ case.  
Let $\rho_{\mu,\lambda} : \pi_1(S^3\setminus B) \to \mathrm{SL}(2, \mathbb{C})$ be the non-parabolic representation of $\pi_1(B\setminus B)$ where
$\mu$ and $\lambda$ are eigenvalues of $h_1$ and $g_{23}$ respectively.   
Let $m$ and $l$ be the the dilatations with respect to the meridian and the longitude of the cusp along $C$ respectively, then
it is known that  $e^m = \mu^2$ and $e^l = \lambda^2$, 
and $\rho_{\mu, \lambda}$ gives a deformed hyperbolic structure to the complement of $B$ such that the cusp shape along $C$ matches $\mu$ and $\lambda$. 
For such deformation, the volume of the complement with respect to this deformed hyperbolic structure is studied by Neumann and Zagier \cite{NZ}.  
Let  
 $f(m)$ be the Neumann-Zagier function for the complement of $B$ given in \cite{NZ}.  
The function $f(m)$ is determined by the following differential equation.  
\[
\frac{d}{d m} f(m) = -\frac{1}{2}l, 
\qquad
f(0) = 0.  
\]
\par
Such deformation  is actually realized as a deformation of  a union of two ideal regular octahedrons which form the complement of $B$. 
Let $p_i$ be the fixed point of $\rho(g_i)$ given by \eqref{eq:whiteheadsolution1} for $i=1$, $2$, $3$, $4$.  
Since $h_1$ commute with $g_{23}$ and $\rho(g_{23})$ is a diagonal matrix, $\rho(h_1)$ is also a diagonal matrix, and 
the action of  $\rho(h_1)$ sends $p_1$ to $p_2$ and $p_4$ to $p_3$.  
These points satisfy
\[
p_2 = -\frac{\left(\sqrt{u}-1\right)^2}{\left(\sqrt{u}+1\right)^2}\, p_1, \qquad
p_3 = -\frac{\left(\sqrt{u}-1\right)^2}{\left(\sqrt{u}+1\right)^2}\, p_4,
\]
so 
$e^{m} = \mu^{2} = -\frac{\left(\sqrt{u}-1\right)^2}{\left(\sqrt{u}+1\right)^2}$, 
and we choose
\[
m =\log \left(-\frac{\left(\sqrt{u}-1\right)^2}
{\left(\sqrt{u}+1\right)^2}\right), \qquad
\mu = -e^{\frac{m}{2}}.
\qquad(u = e^{2 \pi i \alpha})
\]  
On the other hand, the eigenvalues of 
$\rho(g_{23})$ are 
$\lambda=u^{\pm1} = e^{\pm 2 \pi i  \alpha}$, 
$e^l = \lambda^2=u^{\pm2} = e^{\pm 4 \pi i  \alpha}$ and  we put 
\[
l = 4 \pi i \alpha - 2\pi i.
\]  
For positive $p$, $\re \alpha_0^{(p)} > 1/2$, so we adjust $l$ so that $0 \leq \im l < 2\pi$ by subtracting $2 \pi i$.
The  function $\Phi_W(\alpha)$ satisfies
\begin{equation}
\frac{d}{d l} \Phi_W(\alpha)
=
\frac{1}{4 \pi i}\frac{d}{d\alpha} \Phi_W(\alpha)
=
\frac{1}{2}\log\big(-\frac{\left(\sqrt{u}-1\right)^2}{\left(\sqrt{u}+1\right)^2}
\big)
= 
\frac{1}{2} m .  
\label{eq:alphamu}
\end{equation}
Let 
$H(m) =
 \Phi_W(\alpha) 
-
 \frac{1}{2} m l$ 
 where $\lambda$ and $\mu$ satisfies \eqref{eq:alphamu},
 then we have
\begin{equation}
\frac{\partial}{\partial m}
H(m)
=
\frac{\partial}{\partial \alpha} 
\Phi_W(\alpha) 
\frac{\partial \alpha}{\partial m} 
-
\frac{1}{2} l  
-
\frac{1}{2} m \frac{\partial \alpha}{\partial m}
= 
-\frac{1}{2} l.  
\label{eq:diffmu}
\end{equation}
The differential equation \eqref{eq:diffmu} for $H(m)$ is the same differential equation for the Neumann-Zagier function $\Phi(m)$ in \cite{MYo}, which is explained in Appendix D.  
Note that $u$, $v$ in \cite{MYo} are equal to $m/2$, $l/2$.  
Let 
\[
h(m) = H(m)+ \frac{1}{4} m l.
\]
If $m=0$, then  $\mu = -1$, $u = -1$, $\alpha=\frac{1}{2}$ and $h(0)$ coincides with $\mathrm{Vol}(S^3\setminus B)$. 
Therefore,   $h(\mu)-\mathrm{Vol}(S^3\setminus B)$ equals to the function $f(m)$  in \cite{NZ}.  
Moreover, the length and the torsion of the core geodesic of the surgery component is given by the real part and the imaginary part of $l$.  
Hence, by \eqref{eq:NZ} in Appendix D,  we have 
\[
\mathrm{Vol}(S^3\setminus W_p) +
i \, \mathrm{CS}(S^3\setminus W_p)
=
\frac{1}{i}
\big(
h(m) - \frac{\pi i}{2}  \log l
\big).
\]
Since $m + \frac{p}{2} l = 2 \pi i$,  we have
\begin{multline*}
i \left(\mathrm{Vol}(S^3\setminus W_p) +
i\,\mathrm{CS}(S^3\setminus W_p)\right)
=
h(m)  - \frac{\pi i}{2}  l
=
\\
\Phi_W(\alpha_0^{(p)}) 
-\frac{1}{4} m l 
- \frac{\pi i}{2} l
=
\Phi_W(\alpha_0^{(p)}) 
-\frac{1}{4} \big(2 \pi i - 
\frac{p}{2} l\big) l - \frac{\pi i}{2} l
=
\\
\Phi_W(\alpha_0^{(p)}) 
- 4\pi^2\frac{p}{2}\big(\alpha_0^{(p)}-\frac{1}{2}\big)^2 
+ 4 \pi^2 \big(\alpha_0^{(p)} - \frac{1}{2}\big).
\end{multline*}
The last formula coincides with  $\Phi_{W_p}(\alpha_0^{(p)}) - 2\pi i \left(2 \pi i (\alpha_0^{(p)} - \frac{1}{2})\right)$ at the saddle point $\alpha_0^{(p)}$ and so we get
\[
\frac{1}{i}\left(\Phi_{W_p}(\alpha_0^{(p)})- 2\pi i \left(2 \pi i (\alpha_0^{(p)} - \frac{1}{2})\right)\right) = \mathrm{Vol}(S^3\setminus W_p) + i \, \mathrm{CS}(S^3\setminus W_p).
\]  
\par
For positive odd  $p$, $W_p$ is obtained by applying $2/(p-1)$ surgery to the middle complent of  $B_1$ in Figure \ref{fig:borromean}.   
We assign $m$ and $l$ along the component getting the surgery, then we get similar function $h(m)$ which corresponds to the Neumann-Zagier function.  
The only difference is that $ h(0) = \mathrm{Vol}(S^3 \setminus B_1)+ i\, \mathrm{CS}(S^3 \setminus B_1)$, which implies that $\frac{1}{i}  \Phi_{W_p}(\alpha_0^{(p)}) = \mathrm{Vol}(S^3\setminus W_p) + i \, \mathrm{CS}(S^3\setminus W_p)$.  
\par
The proof for negative $p$ case is similar.  
\end{proof}
\section{Double twist knots}
We explain the complexified tetrahedron coming from $\mathrm{SL}(2, \mathbb{C})$ representation of 
$\pi_1(S^3\setminus D_{p,r})$ for the hyperbolic double twist knot $D_{p,r}$., and 
we prove Conjecture 1 for $D_{p, r}$ with the help of the complexified tetrahedron as in the previous section for the twisted Whitehead link.     
Note that the twist knot $T_p$ is equal to $D_{p,2}$, and $D_{-p,-r}$ is the mirror image of $D_{p,r}$,
\subsection{Representation matrices}
We first construct $\mathrm{SL}(2, \mathbb{C})$ representation.   
Let $g_1$, $g_2$, $g_3$, $g_4$, $g_{12}$, $g_{23}$ be elements of $\pi_1(S^3\setminus D_{p,r})$
as  in Figure \ref{fig:doublegenerator}.  
\begin{figure}[htb]
\[
\begin{matrix}
\includegraphics[scale=0.9, alt={generators}]{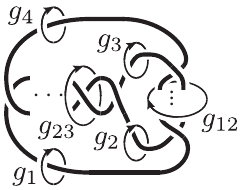}
\end{matrix}
\]
\caption{Elements $g_1$, $g_2$, $g_3$, $g_4$, $g_{12}$, $g_{23}$ in $\pi_1(S^3\setminus D_{p,r})$.}
\label{fig:doublegenerator}
\end{figure}
Then $g_1$, $\cdots$, $g_4$, $g_{12}$, $g_{23}$ satisfy the following relation.  
\begin{align}
&g_{12} = g_1g_2, \ \ 
g_{23} = g_2g_3, \ \ 
g_1g_2g_3g_4 = 1, 
\label{eq:relation1}
\\[3pt]
&\begin{cases}
g_1^{-1} = {g_{23}}^{\frac{p}{2}} g_2 {g_{23}}^{-\frac{p}{2}} \\
g_4^{-1} = {g_{23}}^{\frac{p}{2}} g_3 {g_{23}}^{-\frac{p}{2}} 
\end{cases} \!\!\!\text{if $p$ is even},\ 
\begin{cases}
g_1^{-1} = {g_{23}}^{\frac{p+1}{2}} g_3 {g_{23}}^{-\frac{p+1}{2}} \\
g_4^{-1} = {g_{23}}^{\frac{p-1}{2}} g_2 {g_{23}}^{-\frac{p-1}{2}} 
\end{cases} \!\!\!\text{if $p$ is odd},
\label{eq:relation2}
\\[3pt]
&\begin{cases}
g_4^{-1} = {g_{12}}^{\frac{r}{2}} g_1 {g_{12}}^{-\frac{r}{2}} \\
g_3^{-1} = {g_{12}}^{\frac{r}{2}} g_2 {g_{12}}^{-\frac{r}{2}} 
\end{cases} \!\!\!\text{if $r$ is even},\ 
\begin{cases}
g_4^{-1} = {g_{12}}^{\frac{r+1}{2}} g_2 {g_{12}}^{-\frac{r+1}{2}} \\
g_3^{-1} = {g_{12}}^{\frac{r-1}{2}} g_1 {g_{12}}^{-\frac{r-1}{2}} 
\end{cases}\!\!\! \text{if $r$ is odd}.
\label{eq:relation3}
\end{align}
Let $\rho$ be the geometric $\mathrm{SL}(2, \mathbb{C})$ of $\pi_1(S^3 \setminus D_{p, r})$, then $\rho(g_1)$, $\cdots$, $\rho(g_4)$ are parabolic matrices.  
Here we assume that the eigenvalue of $\rho(g_i)$ is $-1$.  
We also assume that 
\[
\rho(g_{23}) = \begin{pmatrix} u &0 \\ 0 &u^{-1}\end{pmatrix}
\] 
and the eigenvalues of 
$\rho(g_{12})$ are $v$ and $v^{-1}$.  
Then, up to the conjugation, $\rho$ is given as follows.  
\begin{align*}
\rho(g_1) &= 
\left(\begin{matrix}
 -\frac{2 }{{u}+1} &  \frac{u({u}-1)}{{u}+1} \\[3pt]
 -\frac{{u}-1}{u({u}+1)} &  -\frac{2u}{{u}+1} 
\end{matrix}\right),
\\[5pt]
\rho(g_2) &= \!
\begin{pmatrix}
 -\frac{2u}{{u}+1} &  \!\!\!\!\!
-u \frac{(u+1)^2(v^2+1) - 8uv-(u+1)(v-1) \sqrt{D}}
{2v   ({u}-1) ({u}+1)
   }    \\
 \frac{ (u+1)^2(v^2+1) - 8uv+(u+1)(v-1) \sqrt{D}}
  {2uv ({u}-1) ({u}+1)}
& -\frac{2}{{u}+1} 
\end{pmatrix}, 
\\[5pt]
\rho(g_3) &= \!
\left(
\begin{matrix}
 -\frac{2u}{{u}+1} & \!\!\!\!
 \frac{ (u+1)^2(v^2+1) - 8uv-(u+1)(v-1) \sqrt{D}}
 {2 v ({u}-1) ({u}+1)}
    \\
- \frac{(u+1)^2(v^2+1) - 8uv+(u+1)(v-1) \sqrt{D}}
{2v   ({u}-1) ({u}+1)} & 
   -\frac{2}{{u}+1} 
\end{matrix}
\right), 
\\[5pt]
\rho(g_4) &= 
\begin{pmatrix}
-\frac{2}{u+1} & -\frac{u-1}{u+1} 
\\[3pt]
 \frac{u-1}{u+1} & -\frac{2u}{u+1}
\end{pmatrix},
\end{align*}
where $D = (u+1)^2(v+1)^2 - 16uv$.
Let $p_1$, $p_2$, $p_3$, $p_4$ be the fixed points of $\rho(g_1)$, $\rho(g_2)$, $\rho(g_3)$, $\rho(g_4)$ on $\partial \mathbb{H}^2$.  
Then they are given as follows. 
\begin{equation}
\begin{split}
p_1 &=-u, \qquad
p_2 =  -u\frac{(u+1)^2(v^2+1) - 8uv-(u+1)(v-1) \sqrt{D}}
{2v ({u}-1)^2  },
\\
p_3 &= 
\frac{(u+1)^2(v^2+1) - 8uv-(u+1)(v-1) \sqrt{D}}
{2v ({u}-1)^2  },
\qquad
p_4 = 1.  
\end{split}
\label{eq:p}
\end{equation}
Let $p_{23}^0$, $p_{23}^1$ be the fixed points of $\rho(g_{23})$, then $p_{23}^0 = 0$ and $p_{23}^1 = \infty$, and let $p_{12}^0$, $p_{12}^1$ be the fixed points of $\rho(g_{23})$, then they are
\begin{align*}
p_{12}^0 &= -\frac{(u+1)^2(v+1)-8u - (u+1)\sqrt{D}}{4(u-1)}, \\
p_{12}^1 &= 
-\frac{(u+1)^2(v+1)-8uv+ (u+1)\sqrt{D}}{4(u-1)v}.  
\end{align*}
Let $\rho'$ be the representation similar to $\rho$ where $g_{12}$ is mapped to the diagonal matrix  
\[\rho'(g_{12}) = \begin{pmatrix}
v & 0 \\ 0 & v^{-1}
\end{pmatrix}.
\]
Such $\rho'$ is obtained by the transformation matrix 
\[
Q = 
\begin{pmatrix}
 - \frac{(u+1)(v+1)(uv+1)-8uv - (uv-1)\sqrt{D}}{2v(u-1)(v-1)}
 &
-\frac{(u+1)^2(v+1)-8u - (u+1)\sqrt{D}}{4(u-1)} 
\\[3pt]
-\frac{(u+1)(v+1)^2-8uv - (v+1)\sqrt{D}}{4u v(v-1)}
& 1
 \end{pmatrix}.   
\]
For $g \in \pi_1(S^3\setminus K)$, let $\rho'(g) = Q^{-1}\rho(g) \, Q$, 
then we have
\begin{align*}
\rho'(g_1) &= 
\left(
\begin{matrix}
 -\frac{2v}{v+1} &
- \frac{v(v-1)}{v+1} \\[3pt]
 \frac{v-1}{v(v+1)} &
   -\frac{2}{v+1} \\
\end{matrix}
\right),
\qquad
\rho'(g_2) = 
\begin{pmatrix}
-\frac{2v}{v+1} & \frac{v-1}{v+1} 
\\[3pt]
 -\frac{v-1}{v+1} & -\frac{2}{v+1}
\end{pmatrix},
\\[5pt]
\rho'(g_3) &= \!\!
\left(
\begin{matrix}
 -\frac{2}{{v}+1} & \!\!
- \frac{
(u^2+1)(v+1)^2 - 8uv-(u-1)(v+1) \sqrt{D}}
{2  u ({v}-1) ({v}+1)}
    \\
 \frac{(u^2+1)(v+1)^2 - 8uv+(u-1)(v+1) \sqrt{D}}{2u
   ({v}-1) ({v}+1)
   } & 
   -\frac{2v}{{v}+1} 
\end{matrix}
\right)\!, 
\\[5pt]
\rho'(g_4) &=\! \!
\begin{pmatrix}
 -\frac{2}{{v}+1} & 
\!\!\!\!\!\!
v\frac{
  (u^2+1)(v+1)^2 - 8uv-(u-1)(v+1) \sqrt{D}}{2
  u ({v}-1) ({v}+1)}
    \\
- \frac{(u^2+1)(v+1)^2 - 8uv+(u-1)(v+1) \sqrt{D}}
{2uv   ({v}-1) ({v}+1)
   } & 
   -\frac{2v}{{v}+1} 
\end{pmatrix}\!.
\end{align*}
The fixed points $p'_1$, $p'_2$, $p'_3$, $p'_4$ of $\rho'(g_1)$, $\rho'(g_2)$, $\rho'(g_3)$, $\rho'(g_4)$ on $\partial \mathbb{H}^3$  are
\begin{equation}
\begin{split}
p'_1 &=-v, \qquad
p'_2 = 1, \qquad
p'_3 = \frac{(u^2+1)(v+1)^2 - 8uv-(u-1)(v+1) \sqrt{D}}
{2u ({v}-1)^2  },
\\
p'_4 &= 
-v\frac{(u^2+1)(v+1)^2 - 8uv-(u-1)(v+1) \sqrt{D}}
{2u ({v}-1)^2  }.  
\end{split}
\label{eq:pprime}
\end{equation}
The fixed points ${p_{12}^0}'$ and  ${p_{12}^1}'$  of $\rho'(g_{12})$ are ${p_{12}^0}'=0$ and ${p_{12}^1}'=\infty$, and the fixed points  ${p_{23}^0}'$ and  ${p_{23}^1}'$ of $\rho'(g_{23})$ are
\begin{align*}
{p_{23}^0}'&=-\frac{(u+1)(v+1)^2-8v - (v+1)\sqrt{D}}{4(v-1)}, 
\\
{p_{23}^1}'&=-\frac{(u+1)(v+1)^2-8uv + (v+1)\sqrt{D}}{4u(v-1)}.  
\end{align*}
The eigenvalues $u$ and $v$ are determined by the relations \eqref{eq:relation2} and \eqref{eq:relation3}.  
They satisfy
\begin{equation}
(-u)^{-p} p_4 = (-u)^{-p}= p_3, \qquad (-v)^{r} p'_2 =  (-v)^{r} = p'_3.
\label{eq:uv}
\end{equation}
Moreover, the geometric representation is given by a solution among the solutions of \eqref{eq:uv} satisfying
\begin{equation}
p \log (-u) + \log p_3 = \pm 2 \pi \sqrt{-1}, \qquad
-r \log (-v) + \log p'_3 = \pm 2 \pi \sqrt{-1}.  
\label{eq:complete}
\end{equation}
\subsection{Complexified tetrahedron}
Here we explain the complexified tetrahedron $T$ determined by the fixed points $p_1$, $\cdots$, $p_{12}^1$, which is congruent to the complexified tetrahedron $T'$ determined by $p'_1$, $\cdots$, ${p_{12}^1}'$.   
\par 
For $D_{6,2}$,  the solution of equations \eqref{eq:uv} and \eqref{eq:complete}  where the sums are both $+2\pi i$ is
given by
\[
u = -0.619307 - 0.884567 i, \qquad
v = 1.72565 + 2.06055 i, 
\]
The fixed points are given  as follows. 
\[
\begin{tabular}{l}
$p_1 = 0.6193 + 0.8846 i$, \  \ $p_2 =0.0596 + 0.6786 i$, \ \ 
 $p_3 = 0.5464 + 0.3152 i$, 
 \\
$p_4 =  1$, 
\ \ 
$p_{12}^0 = 0.2495 + 0.7240 i$,  \ \ $p_{12}^1 = 0.8631 + 0.2152 i$, 
\\[3pt]
$p'_1 =  -1.7257 - 2.0606 i$,  \ \ $p'_2 = 1$,\ \ 
$p'_3 = -1.2680 + 7.1116 i$,\\
$p'_4 =  16.842 - 9.659 i$,
${p_{23}^0}' = 3.974 + 0.959 i$,  \ \ ${p_{23}^1}' = 3.450 - 3.264 i$.
\end{tabular}
\]
Then, $T$ and $T'$ in $\mathbb{H}^3$ corresponding to $D_{6,2}$ are given as in Figure \ref{fig:doubletet}.  
\begin{figure}[htb]
\[
\begin{matrix}
\begin{matrix}
\includegraphics[scale=0.6, alt={complexified tetrahedron}]{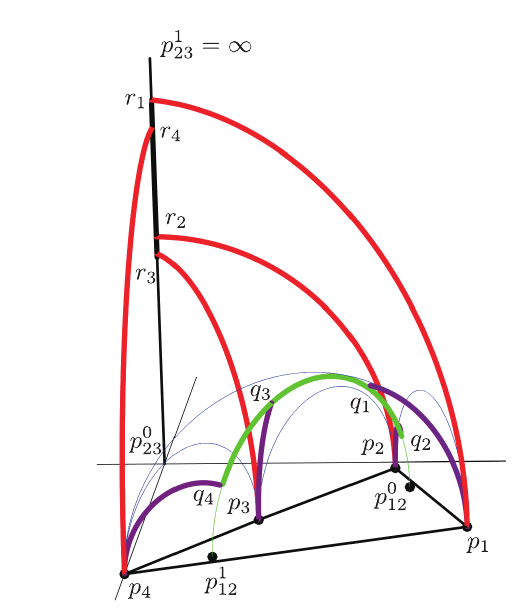}
\end{matrix}
&\underset{Q^{-1}}\longrightarrow & 
\begin{matrix}
\includegraphics[scale=0.6, alt={complexified tetrahedron}]{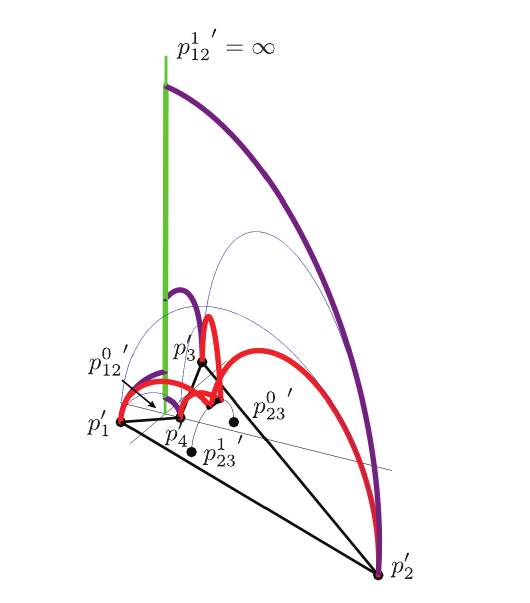}
\end{matrix}
\\
T & & T'
\end{matrix}
\]
\caption{The complexified tetrahedrons $T$ and $T'$ corresponding to $D_{6,2}$.}
\label{fig:doubletet}
\end{figure}
The elements
$\rho(g_{23})$, $\rho(g_{12})$ have axes $l_{23}$, $l_{12}$, so we assign complex parameters to these axes $u$, $v$, which is the eigenvalues of $g_{23}$, $g_{12}$.  
Let $r_1$, $r_2$, $r_3$, $r_4$ be the foots of perpendicular  on $l_{23}$ from $p_1$, $p_2$, $p_3$, $p_4$.  
Similarly, Let $q_1$, $q_2$, $q_3$, $q_4$ be the foots of perpendicular  on $l_{12}$ from $p_1$, $p_2$, $p_3$, $p_4$.  
Let us define eight faces $p_1p_2r_2r_1$, $p_2p_3r_3r_2$, $p_3p_4r_4r_3$, $p_4p_1r_1r_4$, $p_1p_2q_2q_1$, $p_2p_3q_3q_2$, $p_3p_4q_4q_3$, $p_4p_1q_1q_4$.  
These faces are not flat and are not defined uniquely, but the edges of the faces are straight lines and we define these faces topologically.  
Let $T$ be the subset of $\mathbb{H}^3$ surrounded by these eight faces, and this is the complexified tetrahedron corresponding to the representation $\rho$.   
Let $T_1$ be similar complexified tetrahedron constructed from $(-u) p_1$, $(-u)p_2$, $(-u)p_3$, $(-u)p_4$,  $(-u)l_{12}$ and $(-u)l_{23} = l_{23}$.  
Then $T$ and $T_1$ are adjacent at the face $p_3p_4r_4r_3$ and $T\cup T_1$ is a fundamental domain of the action of $\pi_1(S^3\setminus D_{6,2})$ to $\mathbb{H}^3$ given by $\rho$.  
\par
The action of $\rho(g_{23})$ on $\partial \mathbb{H}^3$ corresponds to the multiplication of $u^2$, so we get the picture in the upper row of Figure \ref{fig:action}.  
\begin{figure}[htb]
\[
\begin{matrix}
\begin{matrix}
\includegraphics[scale=0.6]{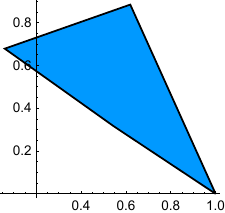}
\end{matrix}
&
\begin{matrix}
\includegraphics[scale=0.8]{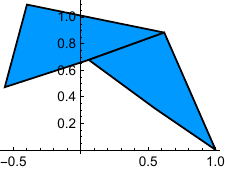}
\end{matrix}
&
\begin{matrix}
\includegraphics[scale=0.9]{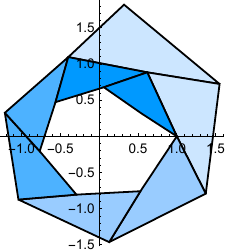}
\end{matrix}
\\
\text{ $p_1p_2p_3p_4$}
&
p_1p_2p_3p_4 \cup (-u)p_1p_2p_3p_4
&
p_1p_2p_3p_4 \cup\cdots \cup (-u)^7p_1p_2p_3p_4
\\[5pt]
\begin{matrix}
\includegraphics[scale=0.6]{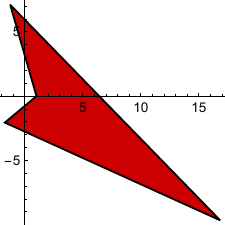}
\end{matrix}
&
\begin{matrix}
\includegraphics[scale=0.85]{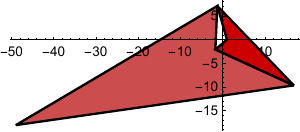}
\end{matrix}
&
\begin{matrix}
\includegraphics[scale=0.45]{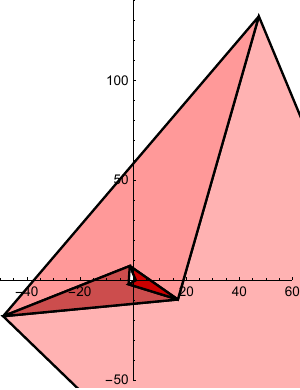}
\end{matrix}
\\
\text{ $p'_1p'_2p'_3p'_4$}
&
p'_1p'_2p'_3p'_4 \cup (-v)p'_1p'_2p'_3p'_4
&
p'_1p'_2p'_3p'_4 \cup\cdots \cup (-v)^{3}p'_1p'_2p'_3p'_4
\end{matrix}
\]
\caption{The actions of $\rho(g_{23})$ and $\rho'(g_{12})$ on $\partial \mathbb{H}^3$.  
The upper row explains the action of $\rho(g_{23})$ and the lower row explains the action of $\rho'(g_{12})$.  
They act $\partial \mathbb{H}^3$ by rotations and enlargements around the origin. 
}
\label{fig:action}
\end{figure}
Similarly, the action of $\rho'(g_{12})$ corresponds to the multiplication of $v^{2}$ and is also explained in the lower row of the figure.  
These pictures show that $p_3$ is  the square of the eigenvalue of the element in $\pi_1(S^3\setminus D_{p,r})$ representing the meridian, and $-u$ is the eigenvalue of the element representing the longitude of  the first surgery component for constructing $D_{p,r}$ from the Borromean rings $B$ (or $B_1$, $B_{1,1}$).  
Similarly, $p'_3$ corresponds to the square of the eigenvalue of the element in $\pi_1(S^3\setminus D_{p,r})$ representing the meridian, and $-v$ is the eigenvalue of the element representing the longitude of  the second surgery component. 
These diagrams represents the cusp shapes around the surgery components which are edges of the two complexified tetrahedrons giving the decomposition of the complement.  
\subsection{Poisson sum formula}
We reformulate the colored Jones polynomial $J_{N-1}(D_{p, r})$ into integral form by using the Poisson sum formula.  
The colored Jones polynomial $J_{N-1}(D_{p, r})$ is given by \eqref{eq:double1} in Appendix C as follows.  
\begin{multline*}
J_{N-1}(D_{p, r}) 
=
-\frac{N^2q^{(p-r)\frac{(N-1)^4}{4}}}{16 \pi^2} \, \times
\\
\sum_{k,l=0}^{N-1}\!\!
\frac{\partial^2}{\partial x \partial y}
q^{p(x - \frac{N-1}{2})^2-r(y-\frac{N-1}{2})^2}
\{2x+1\}\{2y+1\}\!\!\!\!\!\!\!\!\!\!
\left.
 \sum_{s-\frac{x-k+y-l}{2} = \max(k,l)}^{\min(k+l, N-1)}
 \!\!\!\!\!\!\!\!\!\!\!\!
 \xi_N(x, y, s)
\right|_{\text{$\scriptstyle\begin{matrix}x=k\\[-5pt]
y=l\end{matrix}$}}.  
\end{multline*}
Since $\xi_N(x, y, s)$ is a real positive number, 
it takes the maximal at $s_0$ given in \eqref{eq:saddles}.  
Hence
\begin{multline*}
J_{N-1}(D_{p, r}) 
=
-\frac{N^2q^{(p-r)\frac{(N-1)^4}{4}}}{16 \pi^2} \, \times
\\
\sum_{k,l=0}^{N-1}
\left.\frac{\partial^2}{\partial x \partial y}
q^{p(x - \frac{N-1}{2})^2-r(y-\frac{N-1}{2})^2}
\{2x+1\}\{2y+1\}
F_N \xi_N(x, y, s_0)
\right|_{\text{$\scriptstyle\begin{matrix}x=k\\[-5pt]
y=l\end{matrix}$}}.   
\end{multline*}
where $F_N$ is a constant with polynomial growth and%
\[
s_0 = \frac{N}{2 \pi i} \log w_0,  \quad
w_0 = \frac{(u +1)(v+1) - \sqrt{(u +1)^2(v+1)^2 - 16 u v}}{4}, 
\]
\[
u = q^{2x+1}, \qquad
v = q^{2y+1}
\]
 as shown in Appendix C.  
Let $N\alpha = x+ \frac{1}{2}$, $N\beta = y+ \frac{1}{2}$, $N\gamma_0=s_0+\frac{1}{2}$ and
\begin{multline*}
\Psi_B(\alpha, \beta) = 
- 4\pi^2 \big(
 \gamma_0^2 - 2( \alpha +\beta)\gamma_0 + \alpha^2 + \alpha\beta + \beta^2
\big)
-2\Li(e^{2\pi i \gamma_0})
\\
+2\Li(e^{2 \pi i (\gamma_0-\alpha)})+
2\Li(e^{2 \pi i(\gamma_0-\beta)})
+2\Li(-e^{2 \pi i (\alpha+\beta-\gamma_0)})
-\frac{2\pi^2}{3}.
\end{multline*}
Then
\begin{multline*}
J_{N-1}(D_{p, r}) 
=
G_N \frac{N^2q^{(p-r)\frac{(N-1)^4}{4}}}{16 \pi^2} \, \times
\sum_{k,l=0}^{N-1}
\frac{\partial^2}{\partial x \partial y}
\{2N\alpha\}\{2N\beta\}
\\
\left.
\exp\Big(\tfrac{N}{2\pi i} \big(-2 \pi^2 p(\alpha-\tfrac{1}{2})^2 + 2\pi^2 r(\beta-\tfrac{1}{2})^2 + \Psi_B(\alpha, \beta)
\big)\Big)
\right|_{\text{$\scriptstyle\begin{matrix}\alpha=\frac{2k+1}{2N}\\[0pt]
\beta=\frac{2l+1}{2N}\end{matrix}$}},   
\end{multline*}
where $G_N$  is a constant  with polynomial growth.  
\par
Now we apply the Poisson sum formula for $k$ and $l$.  
Let 
\begin{align*}
&\Phi_{D_{p, r}}(\alpha, \beta) 
=
\frac{1}{2\pi i}\left(
-2\pi^2p(\alpha-\frac{1}{2})^2 + 4 \pi^2 r(\beta-\frac{1}{2})^2 +  \Psi_D(\alpha, \beta)
\right),
\\
&\Phi_{D_{p, r}}^{\varepsilon_1, \varepsilon_2}(\alpha, \beta) 
=
\\
&\qquad
\frac{1}{2\pi i}\left(
-2\pi^2p((\alpha-\frac{1}{2})^2 +\varepsilon_1 \frac{\alpha}{N}) + 4 \pi^2 (r(\beta-\frac{1}{2})^2 - \varepsilon_2\frac{\beta}{N}) +  \Psi_D(\alpha, \beta)
\right), 
\end{align*}
where $\varepsilon_1, \varepsilon_2 = \pm1$.  
Then
\begin{multline*}
J_{N-1}(D_{p, r}) 
=
\\
-\frac{N^2q^{(p-r)\frac{(N-1)^4}{4}}}{16 \pi^2} 
\sum_{\varepsilon_1, \varepsilon_2 \in \{-, +\}}
\sum_{k,l=0}^{N-1}
\frac{\partial}{\partial\alpha}
\left.
C_N e^{\frac{N}{2\pi i}\left(\Phi_{D_{p, r}}^{\varepsilon_1, \varepsilon_2}(\alpha, \beta) + O(\frac{1}{N})\right)}
\right|_{\text{$\scriptstyle\begin{matrix}\alpha=\frac{2k+1}{2N}\\[0pt]
\beta=\frac{2l+1}{2N}\end{matrix}$}}.
\end{multline*}
As in the case of twisted Whitehead links, the Poisson sum formula yields
\begin{multline*}
J_{N-1}(D_{p, r}) 
=
q^{(p-r)\frac{(N-1)^4}{4}} 
\, \times
\\
\sum_{\varepsilon_1, \varepsilon_2  \in \{-, +\}}
\sum_{m, n \in \mathbb{Z}}\!\!
(-1)^{m+n} \!\!
\int\!\!\!\!\int_D
\!\!
C'_N 
e^{-2\pi i (k\alpha + l\beta)}\!
\frac{\partial}{\partial\alpha}
e^{\frac{N}{2\pi i}\left(\Phi_{D_{p, r}}^{\varepsilon_1, \varepsilon_2}(\alpha, \beta) + O(\frac{1}{N})\right)}
d\alpha d\beta.
\end{multline*}
Hence, by reformulate as before, we get
\[
J_{N-1}(D_{p,r})
\underset{N\to \infty}{\sim}
\iint_D
C_N^{\prime\prime} e^{\frac{N}{2\pi i} \big(\pm 4\pi^2 \alpha \pm 4 \pi^2 \beta +\Phi_{D_{p, r}}(\alpha, \beta)\big)} d\alpha d \beta. 
\]
Every choice of the signature  gives the same asymptotics.  
\subsection{Saddle point method}
Here we investigate the integral
\begin{equation}
\iint_D e^{\frac{N}{2\pi i} \big(-4\pi^2 \alpha - 4 \pi^2 \beta +\Phi_{D_{p, r}}(\alpha, \beta)\big)}d\alpha d\beta
\label{eq:integral}
\end{equation}
where $D = [0, 1]^2$.  
%
%
\begin{prop}
Let $p$, $r$ be  integers satisfying $p, r\geq 2$ and $p+r\geq 8$, or  
 $p, -r \geq 3$ and $p-r \geq 9$.  
    The asymptotics of the following integral is given by its value at the saddle point as follows.  
\[
\iint_D e^{\frac{N}{2\pi i}  \big(-4\pi^2 \alpha - 4 \pi^2 \beta +\Phi_{D_{p, r}}(\alpha, \beta)\big)}
d\alpha d\beta
\underset{N\to \infty}{\longrightarrow}
e^{\frac{N}{2\pi i} \big(- 4\pi^2 \alpha_0 - 4 \pi^2 \beta_0 +\Phi_{D_{p, r}}(\alpha_0, \beta_0)\big)},
\]  
where $(\alpha_0$, $\beta_0)$ is the solution of 
\begin{equation}
\begin{aligned}
\frac{\partial}{\partial \alpha}
\Big(
- 4\pi^2 \alpha - 4 \pi^2 \beta +\Phi_{D_{p, r}}(\alpha, \beta)\Big)&=0,
\\
\frac{\partial}{\partial \beta}\Big(
- 4\pi^2 \alpha - 4 \pi^2 \beta +\Phi_{D_{p, r}}(\alpha, \beta)\Big)&=0.
\end{aligned}
\label{eq:saddleD}
\end{equation}
This system of equations is called the saddle point equation.  
\label{prop:saddle}
\end{prop}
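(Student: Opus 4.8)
The plan is to mimic the saddle point analysis carried out for the twisted Whitehead links in Section~2, which in turn follows \cite{O1,O2,OY}. Abbreviate the exponent appearing in \eqref{eq:integral} by $\Xi_{p,r}(\alpha,\beta) = -4\pi^2\alpha - 4\pi^2\beta + \Phi_{D_{p,r}}(\alpha,\beta)$, so the integrand of \eqref{eq:integral} is $e^{\frac{N}{2\pi i}\Xi_{p,r}(\alpha,\beta)}$. First I would locate the saddle point: taking exponentials of the system \eqref{eq:saddleD} turns it, under the substitution $u=e^{2\pi i\alpha}$, $v=e^{2\pi i\beta}$, into an algebraic system equivalent to \eqref{eq:uv}, and the relevant solution $(\alpha_0,\beta_0)$ is the one for which $e^{2\pi i\alpha_0}=u$, $e^{2\pi i\beta_0}=v$ are the eigenvalues of $\rho(g_{23})$, $\rho(g_{12})$ picked out by the completeness conditions \eqref{eq:complete}. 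Then, exactly as in the proof of Theorem~3 but now deforming the Borromean rings complement along the \emph{two} surgery cusps simultaneously, I would compare $\Phi_{D_{p,r}}$ with the Neumann--Zagier potential of $B$ (or $B_1$, $B_{1,1}$) of \cite{NZ,MYo} and use \eqref{eq:NZ} to identify $\tfrac{1}{i}\,\Xi_{p,r}(\alpha_0,\beta_0)$ with $\Vol(S^3\setminus D_{p,r}) + i\,\CS(S^3\setminus D_{p,r})$; in particular $\re\tfrac{1}{2\pi i}\Xi_{p,r}(\alpha_0,\beta_0) = \tfrac{1}{2\pi}\Vol(S^3\setminus D_{p,r})$.

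The analytic heart of the proof is the boundary estimate. I would show that there is $\delta>0$ with $\re\tfrac{1}{2\pi i}\Xi_{p,r}(\alpha,\beta) < \tfrac{1}{2\pi}\Vol(S^3\setminus D_{p,r}) - \delta$ on a neighbourhood of $\partial D$ inside $D=[0,1]^2$. On $\partial D$ the dilogarithm part $\Psi_D$ is controlled via its values at the corners of the square, and the dominant contribution near the boundary comes from the quadratic terms produced by the framing factor $q^{p(x-\frac{N-1}{2})^2 - r(y-\frac{N-1}{2})^2}$, namely $-2\pi^2 p(\alpha-\tfrac12)^2 + 2\pi^2 r(\beta-\tfrac12)^2$: the hypotheses $p,r\geq 2$ with $p+r\geq 8$ (resp. $p,-r\geq 3$ with $p-r\geq 9$) are precisely what make these terms large enough to beat $\re\Psi_D$ along $\partial D$ while still leaving $D_{p,r}$ hyperbolic. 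Once this is established, I would deform the two-real-dimensional cycle $D$ inside $\mathbb{C}^2$, keeping $\partial D$ fixed, to a cycle $D'$ through $(\alpha_0,\beta_0)$ along which $\re\tfrac{1}{2\pi i}\Xi_{p,r}$ attains its maximum only at the saddle point; Cauchy's theorem shows the integrals over $D$ and $D'$ agree, and the boundary tube contributes at most $e^{N(\Vol(S^3\setminus D_{p,r})/(2\pi)-\delta)}$ times a polynomial. The contour graphs in Figures~\ref{fig:contourW} and~\ref{fig:contourW520} illustrate the same phenomenon in the one-variable twisted Whitehead case, and the two-variable picture is analogous.

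Finally, on $D'$ the standard two-dimensional saddle point estimate applies: the complex Hessian of $\Xi_{p,r}$ at $(\alpha_0,\beta_0)$ is, by the Neumann--Zagier identification, the nondegenerate Hessian governing the hyperbolic Dehn surgery deformation of $S^3\setminus B$ along the two cusps, so the Gaussian integral evaluates to $e^{\frac{N}{2\pi i}\Xi_{p,r}(\alpha_0,\beta_0)}$ up to a factor of order $N^{-1}$, which is the asserted asymptotics. The main obstacle is the boundary estimate of the second paragraph: one must bound $\re\Psi_D$ uniformly along $\partial D$ and verify the quadratic domination for \emph{all} admissible pairs $(p,r)$ at once, and then check that the contour deformation from $D$ to $D'$ can be carried out without $\re\tfrac{1}{2\pi i}\Xi_{p,r}$ ever exceeding $\tfrac{1}{2\pi}\Vol(S^3\setminus D_{p,r})$. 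This is the ``the integral range must be wide enough to surround the saddle point'' difficulty flagged in the introduction, and it becomes tractable here only because the quantum $6j$-symbol expression of $J_{N-1}(D_{p,r})$ keeps the summation region the simple polytope $D$.
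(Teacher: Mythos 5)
Your outline matches the paper's overall strategy --- exponentiate \eqref{eq:saddleD} to locate $(\alpha_0,\beta_0)$ via \eqref{eq:uv} and \eqref{eq:complete}, identify the critical value with the complex volume through the Neumann--Zagier function, and then move the two-dimensional integration cycle onto the saddle point --- but the step you yourself flag as ``the main obstacle'' is exactly the content of the proposition, and your proposal does not actually supply it. You assert that near $\partial D$ the quadratic terms $-2\pi^2 p(\alpha-\tfrac12)^2+2\pi^2 r(\beta-\tfrac12)^2$ dominate $\re\Psi_D$ and that the hypotheses $p,r\geq 2$, $p+r\geq 8$ (resp.\ $p,-r\geq 3$, $p-r\geq 9$) are ``precisely'' what makes this work, but no such uniform boundary estimate is proved, and it is doubtful that pure quadratic domination can be pushed down to the smallest admissible cases such as $D_{6,2}$ or $D_{4,4}$, where $p$ and $r$ are not large. (The thresholds in the hypotheses are also partly there to exclude the low-crossing knots already settled in \cite{O1}, \cite{OY}, \cite{O2}, not solely the output of an analytic inequality.)

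The paper closes this gap differently, in two stages. First, for the representative cases $D_{6,2}$, $D_{5,3}$, $D_{4,4}$, $D_{6,-3}$, $D_{5,-4}$ it exhibits explicit chains of translated cycles $\alpha=x-ci$, $\beta=y-c'i$ (Figures \ref{fig:saddle} and \ref{fig:deform}) whose boundaries stay inside the region where $\mathrm{Im}\big(-4\pi^2\alpha-4\pi^2\beta+\Phi_{D_{p,r}}(\alpha,\beta)\big)$ is below $v_{D_{p,r}}$, so the cycle can be pushed onto the saddle point by inspection of the contour plots. Second, Appendix E proves (Lemma E.1 and Proposition E.1) that the gradient of $f_{p,r}(\alpha,\beta)=-4\pi^2\alpha-4\pi^2\beta+\Phi_{D_{p,r}}(\alpha,\beta)$ does not vanish on the boundary of a fixed box $E$ for all $p,r\geq 3$, so by stability of the index of the gradient field there is exactly one critical point in $E$ for every such $(p,r)$; this is what licenses deforming the verified cycle for $D_{3,3}$ (and the analogous boxes $E'$, $E''$ for negative $r$ and for the $r=2$ family) continuously in the parameters $p$ and $r$ to reach every admissible knot. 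To complete your version you must either prove your uniform boundary inequality for all admissible $(p,r)$ including the extremal ones, or replace it with a continuation argument of this kind; as written, the deformation of $D$ to $D'$ is unjustified.
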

\begin{proof}
Let $v_{D_{p,r}}$ be the hyperbolic volume of the complement of $D_{p,r}$.  
Then we can push the integral region inside the contour of 
$\mathrm{Im}\big(- 4\pi^2 \alpha_0 - 4 \pi^2 \beta_0 +\Phi_{D_{p, r}}(\alpha_0, \beta_0)\big) = v_{D_{p,r}}$ 
to the saddle point as in Figure \ref{fig:saddle} for $D_{6,2}$ and Figure \ref{fig:deform} for $D_{5,3}$, $D{4, 4}$, $D_{6, -3}$, and $D_{5, -4}$.
\begin{figure}[htb]
{\normalsize
\[
\hspace{-5mm}
\begin{matrix}
\includegraphics[scale=0.7]{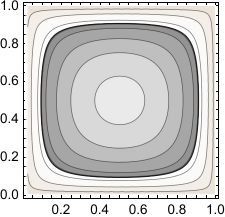}
\\
\alpha = x - i 0 \\
\beta = y - i 0 
\end{matrix}
\qquad
\begin{matrix}
\includegraphics[scale=0.7]{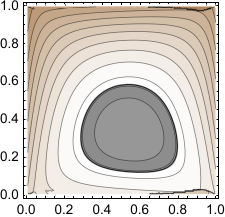}
\\
\alpha = x - 0.08i \\
\beta = y - 0.006 i 
\end{matrix}
\qquad
\begin{matrix}
\includegraphics[scale=0.7]{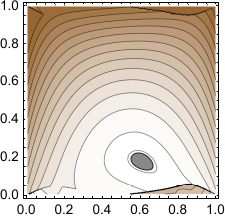}
\\
\alpha = x - 0.15i \\
\beta = y - 0.01  i 
\end{matrix}
\qquad
\begin{matrix}
\includegraphics[scale=0.7]{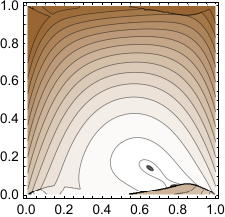}
\\
\alpha = x - 0.012i  \\    
\beta = y - 0.157i  
\end{matrix}
\]
}
\normalsize
\qquad
$D_{6,2}\qquad
\mathbb{C}^2\ \ 
\raisebox{0mm}
{\ \ $\begin{matrix}
\uparrow 
\\\mathbb{R}^2&\\
\ \ \   \searrow
\end{matrix}$}
\begin{matrix}
\includegraphics[scale=0.8]{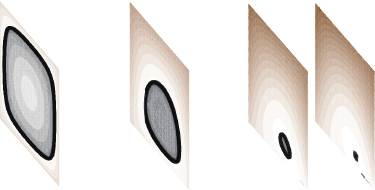}
\end{matrix}
$
\\[-2mm] \qquad\qquad\qquad
 \raisebox{0mm}{
 $
\longleftarrow (i \, \mathbb{R})^2$}
\caption{Push the integral region for $D_{6,2}$ to the imaginary direction.}
\label{fig:saddle}
\end{figure}
The contours os the boundary of the gray regions show the level indicating the hyperbolic volume of $S^3\setminus D_{p, r}$.  
Therefore, we can apply the saddle point method.  
In the figures, we see the contours of the function at  planes parallel to the real plane including the original integral region.  
In the function $- 4\pi^2 \alpha - 4 \pi^2 \beta +\Phi_{D_{p, r}}(\alpha, \beta)$, we can deform the parameters $p$ and $r$ continuously.  
For detail, see Appendix E.   
Therefore, we can also deform the integral region continuously from small $p$, $|r|$ to large $p$, $|r|$, where the saddle point converges to $\alpha = \beta = 1/2$ as in Figure \ref{fig:saddlepoints}.   
\end{proof}
\begin{figure}[htb]
\[
i\, \mathbb{R}\uparrow  \ 
\begin{matrix}
\includegraphics[scale=1.4, alt={saddle points in the complex plane}]{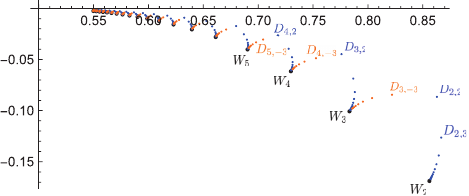}
\end{matrix}
\raisebox{2.5cm}{$\mathbb{R}$}
\]
\caption{Saddle points $\alpha$ for $D_{p, r}$ and $W_p$ with positive $p$.
Blue points are for $D_{p, r}$ with positive $r$, orange points are for negative $r$ and black points are for $W_p$ up to $p=20$.  }
\label{fig:saddlepoints}
\end{figure}
\begin{small}
\begin{figure}[htb]
\[
\begin{matrix}
D_{5,3} & 
\begin{matrix}
\includegraphics[scale=0.65]{contour0}
\\
\alpha = x - 0i \\
\beta = y - 0  i 
\end{matrix}
&
\begin{matrix}
\includegraphics[scale=0.65]{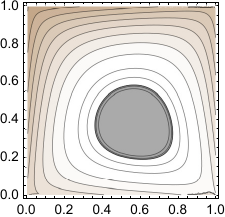}
\\
\alpha = x - 0.0125i  \\    
\beta = y - 0.04i  
\end{matrix}
&
\begin{matrix}
\includegraphics[scale=0.65]{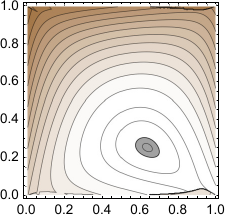}
\\
\alpha = x - 0.02i  \\    
\beta = y - 0.08i  
\end{matrix}
&
\begin{matrix}
\includegraphics[scale=0.65]{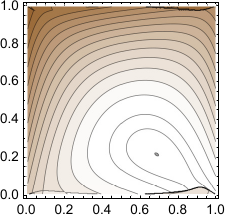}
\\
\alpha = x - 0.025i  \\    
\beta = y - 0.088i  
\end{matrix}
\\{}\\
D_{4,4}
& 
\begin{matrix}
\includegraphics[scale=0.65]{contour0}
\\
\alpha = x - 0i \\
\beta = y - 0  i 
\end{matrix}
&
\begin{matrix}
\includegraphics[scale=0.65]{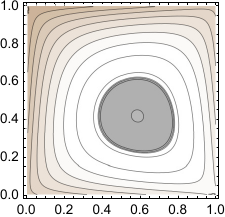}
\\
\alpha = x - 0.02i  \\    
\beta = y - 0.02i  
\end{matrix}
&
\begin{matrix}
\includegraphics[scale=0.65]{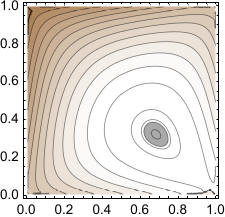}
\\
\alpha = x - 0.04i  \\    
\beta = y - 0.04i  
\end{matrix}
&
\begin{matrix}
\includegraphics[scale=0.65]{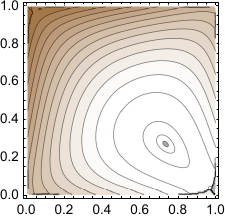}
\\
\alpha = x - 0.0477i  \\    
\beta = y - 0.0477i  
\end{matrix}
\\{}\\
D_{6, -3}
& 
\begin{matrix}
\includegraphics[scale=0.65]{contour0}
\\
\alpha = x - 0i \\
\beta = y - 0  i 
\end{matrix}
&
\begin{matrix}
\includegraphics[scale=0.65]{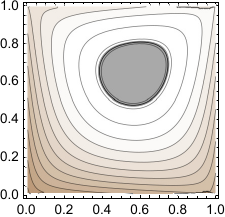}
\\
\alpha = x - 0.01i  \\    
\beta = y - 0.05i  
\end{matrix}
&
\begin{matrix}
\includegraphics[scale=0.65]{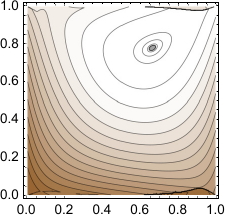}
\\
\alpha = x - 0.02i  \\    
\beta = y - 0.09i  
\end{matrix}
&
\begin{matrix}
\includegraphics[scale=0.65]{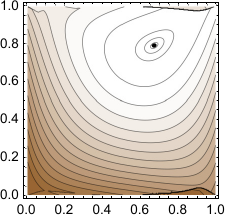}
\\
\alpha = x - 0.0206i  \\    
\beta = y - 0.0935i  
\end{matrix}
\\{}\\
D_{5, -4}
& 
\begin{matrix}
\includegraphics[scale=0.65]{contour0}
\\
\alpha = x - 0i \\
\beta = y - 0  i 
\end{matrix}
&
\begin{matrix}
\includegraphics[scale=0.65]{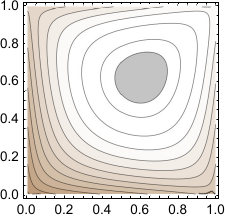}
\\
\alpha = x - 0.02i  \\    
\beta = y - 0.03i  
\end{matrix}
&
\begin{matrix}
\includegraphics[scale=0.65]{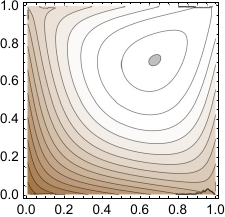}
\\
\alpha = x - 0.03i  \\    
\beta = y - 0.05i  
\end{matrix}
&
\begin{matrix}
\includegraphics[scale=0.65]{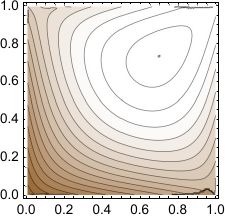}
\\
\alpha = x - 0.033i  \\    
\beta = y - 0.055i  
\end{matrix}
\end{matrix}
\]
\caption{Push the integral region for $D_{p,r}$ to the imaginary direction.}
\label{fig:deform}
\end{figure}
\end{small}
\subsection{Volume of the complement}
The potential function $\Phi_{D_{p, r}}(\frac{x}{2\pi i}, \frac{y}{2\pi i})$ satisfies 
\[
\exp\left(\frac{\partial}{\partial x}\Phi_{D_{p, r}}\Big(\frac{x}{2\pi i}, \frac{y}{2\pi i}\Big)\right) = u^p\,p_3
\]
and
\[
\exp\left(\frac{\partial}{\partial y}\Phi_{D_{p, r}}\Big(\frac{x}{2\pi i}, \frac{y}{2\pi i}\Big)\right) = v^{-r}\,{p'_3}
\]
for $p_2$, $p'_4$ in \eqref{eq:p},  \eqref{eq:pprime} and
$u = e^x$, $v=e^y$, since the actual computation shows that
\[
\exp\left(\frac{\partial}{\partial x}\Psi_B\Big(\frac{x}{2\pi i}, \frac{y}{2\pi i}\Big)\right) = p_3, 
\qquad
\exp\left(\frac{\partial}{\partial y}\Psi_B\Big(\frac{x}{2\pi i}, \frac{y}{2\pi i}\Big)\right) = p'_3.
\]
By comparing $\Phi_{D_{p, r}}\big(\frac{x}{2\pi i}, \frac{y}{2\pi i}\big)$ with the Neumann-Zagier function as in the case of the twisted Whitehead link, we get 
\begin{multline*}
\frac{1}{i}\left(\Phi_{D_{p, r}}(\alpha_0, \beta_0)- 2\pi i \Big(2 \pi i (\alpha_0 - \frac{1}{2})+2 \pi i (\beta_{0} - \frac{1}{2})\Big)\right) =
\\
 \mathrm{Vol}(S^3\setminus D_{p, r}) + i \, \mathrm{CS}(S^3\setminus D_{p, r}).
\end{multline*}
Therefore, the volume conjecture holds for $D_{p, r}$.  
The volume conjecture for the double twist knots $D_{p, r}$ with the integers $p$, $r$ excluded in Proposition \ref{prop:saddle} is already proved in  \cite{O1}, \cite{OY}, \cite{O2}.  
%
%
%
\bigskip
\par
\begin{center}
\large\bf{Appendices}
\end{center}
%
%
\setcounter{section}{0}
\renewcommand{\thesection}{Appendix \Alph{section}}
\section{ADO invariants for colored knotted graphs} 
\renewcommand{\thesection}{\Alph{section}}
%
 Here we recall two quantum invariants defined for colored knotted graphs,  which is also known as the quantum spin network.  
 The first one is the Kirillov-Reshetikhin invariant introduced in \cite{KR}, which is a generalization of the colored Jones polynomial, and the second one is the ADO invariant, which is also related to quantum $sl_2$ as the colored Jones polynomial, but this invariant is defined for the case that the quantum parameter $q$ is a root of unity.  
 The ADO invariant was introduced in \cite{ADO} for knots and links, and generalized to colored knotted graphs in \cite{CM}.  
 The colored Jones invariant $J_{N-1}(K)$ is equal to $(-1)^{N-1}\ADO_N(K)$, and is equal to $\ADO_N(K)$ for odd $N$, where all the components of $K$ are colored by $(N-1)/2$.
Here we compute $\ADO_{N}(K)$ instead of $J_{N-1}(K)$ to get the desired form of the invariant which fits to the investigation of the asymptitics of the invariant.  
\subsection{ADO invariant for colored knots and links}
We use the following notations.  
\begin{align*}
q^a &= \exp\big(\frac{\pi i a}{N}\big)  \ \ (a \in \mathbb{C}),
\qquad
\{a\} = q^a - q^{-a}, \ \ \{a, k\} = \prod_{j=0}^{k-1} \{a-j\},
\\
\left[\,\begin{matrix} a \\ b\end{matrix}\,\right]
&= \prod_{j=0}^{a-b-1}\frac{\{a - j\}}{\{a-b-j\}} \ \ (a-b \in \{0, 1, \ldots, N-1\}),
\\
t_a &= a(a+1-N) = (a-\frac{N-1}{2})^2 - \frac{(N-1)^2}{4}.  
\end{align*}
Let $\mathcal{U}_q(sl_2)$ be the quantum $sl_2$ at the $2N$-th root of unity $q$ and let $V_a$ be the highest weight irreducible module with the highest weight $q^a$.  
For $a \in (\mathbb{C}\setminus \mathbb{Z}/2)\cup (N\mathbb{Z}-1)/2$, $ \dim V_a = N$.   
\par
Let $K=K_1\cup K_2 \cup \cdots \cup K_\ell$ be a $\ell$ component oriented link diagram whose components are labeled by $c_1$, $\cdots$, $c_\ell$ where $c_i \in (\mathbb{C}\setminus \mathbb{Z}/2) \cup (N\mathbb{Z}-1)/2$.
The label $c_i$ is called the {\it color} of the $i$-th component $K_i$.  
Let $T_K$ be a $(1,1)$ tangle obtained by cutting the $j$-th component of $K$.  
Then, by assigning the quantum $R$ matrix to the crossings, evaluation map to the maximal points and coevaluation map to the minimal points given in \cite{CM}, we get a scalar matrix of size $N$.  
This scalar depends on the color $c_j$ for the $j$-th component, and by multiplying 
$\left[\begin{matrix} 2c_j + N \\ 2c_j + 1\end{matrix}\right]^{-1}$, 
we get the ADO invariant $\ADO_N(K^{c_1, \cdots, c_\ell})$ corresponding to the blackboard framing of $K$.  
Especially, the framings of a link diagram $K$ are all zero, them $\ADO_N(K^{c_1, \cdots, c_\ell})$ is a link invariant of $K$.  
\subsection{ADO invariant for colored knotted graphs}
\label{sec:ADO}
By introducing  operators corresponding to trivalent vertices, the ADO invariant is generalized to colored knotted graphs as in \cite{CM}. 
The ADO invariant is defined for a root of unity $q = e^{2\pi i/N}$ and the colors assigned to edges must contained in $\left(\mathbb{C} \setminus \mathbb{Z}/2\right) \cup N \mathbb{Z}/2$.  
In the following, we sometimes consider colors in $\mathbb{Z}/2$, and in such case, the corresponding invariant is considered to be a limit of the invariants with non-half-integer colors.  
Usually, such limit diverges, but sometimes it converges.  
\begin{dfn}
A coloring of a knotted graph is {\it admissible} if
the three colors $a$, $b$, $c$ of three edges around a vertex must satisfy the following condition.  
\begin{align*}
\begin{matrix}
\includegraphics[scale=0.8]{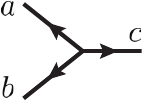}
\end{matrix}
&  \qquad
a + b+c = -2N+2, -2N+3,  \cdots, -N+1, 
\\
\begin{matrix}
\includegraphics[scale=0.8]{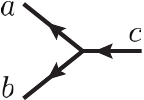}
\end{matrix}
 & \qquad
a+b-c = -N+1, -N+2, \cdots, 0, 
\\
\begin{matrix}
\includegraphics[scale=0.8]{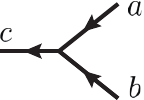}
\end{matrix}
& \qquad
a + b-c = 0, 1,  \cdots, N-1,
\\
\begin{matrix}
\includegraphics[scale=0.8]{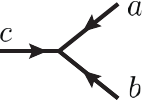}
\end{matrix}
&\qquad
a+b+c = N-1, N, \cdots, 2N-2. 
\end{align*}
\end{dfn}
In the rest, we only consider admissible colorings.  
\par
The ADO invariant for knotted graphs satisfies the following relations.  
\begin{align} 
&\mathrm{ADO}_N(\bigcirc^a) = 
\left[\begin{matrix} 2a+N\\2a+1\end{matrix}\right]^{-1} , 
\label{eq:ADOtrivial}
\\
&\mathrm{ADO}_N\left(\begin{matrix}
 \includegraphics[scale=0.8]{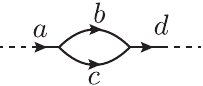}
\end{matrix}\right)
= 
 \delta_{ad}
 \left[\begin{matrix} 2a+N\\2a+1\end{matrix}\right] \ 
 \mathrm{ADO}_N\left(\begin{matrix} \includegraphics[scale=0.8]{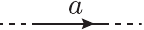}
\end{matrix}\right),
\label{eq:ADOtheta}
\\
&\mathrm{ADO}_N\left(\begin{matrix}
 \includegraphics[scale=0.8]{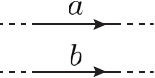}
\end{matrix}\right)
= 
\label{eq:ADOparallel}
\\
&\qquad\qquad
\sum_{a+b-c = 0, 1, \cdots, N-1}
\left[\begin{matrix} 2c+N\\2c+1\end{matrix}\right]^{-1} \,
\mathrm{ADO}_N\left(\begin{matrix}
 \includegraphics[scale=0.8]{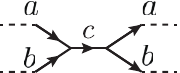}
\end{matrix}\right), 
\notag
\\
&\mathrm{ADO}_N\left(\begin{matrix}
 \includegraphics[scale=0.8]{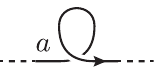}
\end{matrix}\right)
= q^{2t_a}\,
\mathrm{ADO}_N\left(\begin{matrix}
 \includegraphics[scale=0.8]{bubble0}
\end{matrix}\right)
, 
\label{eq:ADOtwistp}
\\
&\mathrm{ADO}_N\left(\begin{matrix}
 \includegraphics[scale=0.8]{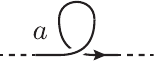}
\end{matrix}\right)
 = q^{-2t_a}
 \mathrm{ADO}_N\left(\begin{matrix}
 \includegraphics[scale=0.8]{bubble0}
\end{matrix}\right),
\label{eq:ADOtwistn}
\\
&\mathrm{ADO}_N\left(\begin{matrix}
 \includegraphics[scale=0.8]{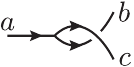}
\end{matrix}\right)
= q^{t_a - t_b -t_c}
\mathrm{ADO}_N\left(\begin{matrix}
 \includegraphics[scale=0.8]{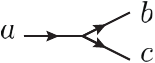}
\end{matrix}\right)
, 
\label{eq:ADOtwist3p}
\\
&\mathrm{ADO}_N\left(\begin{matrix}
 \includegraphics[scale=0.8]{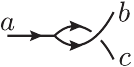}
\end{matrix}\right)
= q^{-(t_a-t_b-t_c)}
\mathrm{ADO}_N\left(\begin{matrix}
 \includegraphics[scale=0.8]{twist0}
\end{matrix}\right)
,
\label{eq:ADOtwist3n}
\\
&\mathrm{ADO}_N\left(\begin{matrix}
 \includegraphics[scale=0.8]{bubble0}
\end{matrix}\right)
= 
\mathrm{ADO}_N\left(\begin{matrix}
 \includegraphics[scale=0.8]{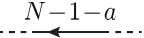}
\end{matrix}\right)  \quad
\text{(dual representation)}.
\label{eq:ADOinverse}
\end{align}
By using the above relations, we get the following relation.    
\begin{lem}
We can remove a circle around an edge as follows.
\begin{multline}
\mathrm{ADO}_N\left(\begin{matrix}
 \includegraphics[scale=0.8]{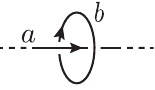}
\end{matrix}\right)
= 
\\
i^{N-1} q^{(2a+1-N)(2b+1-N)} \{ 2a+N, N-1\}\,
\mathrm{ADO}_N\left(\begin{matrix}
 \includegraphics[scale=0.8]{bubble0}
\end{matrix}\right).
\label{eq:hopf}
\end{multline}
\end{lem}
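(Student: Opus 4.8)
The plan is to reduce the left-hand diagram of \eqref{eq:hopf} --- an $a$-colored edge with a $b$-colored meridian loop clasped around it --- to a scalar multiple of a bare $a$-colored arc (the ``bubble0'' diagram) using only the graphical relations \eqref{eq:ADOtrivial}--\eqref{eq:ADOinverse} together with the $R$-matrix evaluation of crossings from \cite{CM}. Conceptually the outcome is forced to be a scalar multiple: encircling by the $b$-colored loop is an endomorphism of the module $V_a$ carried by the edge, and $V_a$ is simple for the admissible colors in play, so Schur's lemma gives a scalar; the content of the lemma is the explicit value of that scalar, and the colors in $\mathbb{Z}/2$ are handled by the limiting convention of Section~\ref{sec:ADO}.

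Concretely, I would first isotope the $b$-loop so that one of its arcs runs parallel to the $a$-edge along a short band, and apply the fusion relation \eqref{eq:ADOparallel}: this rewrites the configuration as $\sum_{c}\left[\begin{matrix}2c+N\\2c+1\end{matrix}\right]^{-1}$ times a diagram in which the $a$-edge and that arc merge into an internal $c$-colored edge through two trivalent vertices colored $(a,b,c)$, with $a+b-c\in\{0,\dots,N-1\}$. In each summand the remaining arc of the $b$-loop now winds once around the internal $c$-edge; I would unwind it using the twisted-vertex relations \eqref{eq:ADOtwist3p}, \eqref{eq:ADOtwist3n} at the two vertices, together with the curl relations \eqref{eq:ADOtwistp}, \eqref{eq:ADOtwistn} for the leftover framing, which trades the winding for powers of $q$ of the shape $q^{\pm(t_a-t_b-t_c)}$ and $q^{\pm 2t_\bullet}$ (with $t_x=(x-\tfrac{N-1}{2})^2-\tfrac{(N-1)^2}{4}$) and leaves a plain bigon between the $c$- and $b$-edges sitting on the $a$-edge. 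Collapsing that bigon with \eqref{eq:ADOtheta} (using \eqref{eq:ADOinverse} to match duals) isolates the relevant internal color and contributes a factor $\left[\begin{matrix}2a+N\\2a+1\end{matrix}\right]^{\pm 1}$, while \eqref{eq:ADOtrivial} disposes of any residual closed $b$-component.

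What remains is bookkeeping: collect the fusion coefficients, the ribbon/twist eigenvalues produced by the unwinding, the bigon-collapse factor, and the normalizations of the cup/cap maps, and then carry out the sum over the admissible internal color. Here it is essential that $q$ is a $2N$-th root of unity: the quadratic $q$-powers organize into a finite Gauss-type sum whose evaluation produces the prefactor $i^{N-1}$ and removes the internal-color dependence, and the identity $\{2a+N,N-1\}=\left[\begin{matrix}2a+N\\2a+1\end{matrix}\right]\{N-1\}!$ together with $q^{N}=-1$ repackages the surviving factors into $i^{N-1}q^{(2a+1-N)(2b+1-N)}\{2a+N,N-1\}$. I expect this last step --- tracking every $q$-power and phase through the unwinding and pinning down the Gauss-sum evaluation --- to be the main obstacle; the topological reductions themselves are routine once the clasp has been resolved into twisted vertices and curls.
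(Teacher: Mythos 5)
Your plan follows essentially the same route as the paper's proof: fuse the $b$-loop onto the $a$-edge via \eqref{eq:ADOparallel}, extract the twist factors $q^{2(t_{a+b-k}-t_a-t_b)}$ via \eqref{eq:ADOtwist3p}, collapse the resulting theta via \eqref{eq:ADOtheta}, and then evaluate the sum over the internal color. The only cosmetic difference is that the key sum $\sum_{k=0}^{N-1}\{2(a+b-k)+1\}\,q^{t_{a+b-k}}$ is handled in the paper by completing the square so that it telescopes to $q^{(\cdots)}\{2N(a+b)\}$, with the factor $i^{N-1}$ then arising from the identity $\{2(a+b)+N,N\}=-i^{N-1}\{2N(a+b)\}$ rather than from a Gauss-sum evaluation per se.
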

\begin{proof}
The lefthand side of the formula is computed as follows.  
\begin{align*}
&\mathrm{ADO}_N\left(\begin{matrix}
 \includegraphics[scale=0.8]{hopfab}
\end{matrix}\right)
\underset{\eqref{eq:ADOparallel}}{=} 
\\
&\sum_{a+b-c = 0, 1, \cdots, N-1}
\left[\begin{matrix} 2c+N\\2c+1\end{matrix}\right]^{-1} \,
\mathrm{ADO}_N\left(\begin{matrix}
 \includegraphics[scale=0.8]{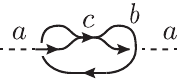}
 \end{matrix}\right)
\\
\underset{\eqref{eq:ADOtwist3p}}{=} &
\sum_{k = 0}^{N-1}
q^{2(t_{a+b-k} - t_a - t_b)}
\left[\begin{matrix} 2(a+b-k)+N\\2(a+b-k)+1\end{matrix}\right]^{-1} \,
\mathrm{ADO}_N\left(\begin{matrix}
 \includegraphics[scale=0.8]{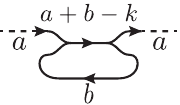}
\end{matrix}\right)\hfill
\\
=&
\frac{\{N-1\}! \, q^{-t_a-t_b}}{\{2(a+b)+N\}\cdots\{2(a+b)+1\}}
\times\hfill
\\& \qquad\qquad
\sum_{k = 0}^{N-1}
\{2(a+b-k)+1\} \, q^{t_{a+b-k}}
\mathrm{ADO}_N\left(\begin{matrix}
 \includegraphics[scale=0.8]{hopfab2}
\end{matrix}\right)
\\
\underset{\eqref{eq:ADOtheta}}{=} &
\frac{\{N-1\}! \, q^{-t_a-t_b}}{\{2(a+b)+N\}\cdots\{2(a+b)+1\}}
\left[\begin{matrix} 2a+N\\2a+1\end{matrix}\right] 
\times\hfill
\\&\qquad\qquad
\sum_{k = 0}^{N-1}
\{2(a+b-k)+1\} \, q^{t_{a+b-k}}
\mathrm{ADO}_N\left(\begin{matrix}
 \includegraphics[scale=0.8]{bubble0}
\end{matrix}\right) 
\\
{=} &
\frac{q^{-t_a-t_b}\, \{2a+N, N-1\}}{\{2(a+b)+N, N\}}\,
\times\hfill
\\ &\qquad
\sum_{k = 0}^{N-1}
\{2(a+b-k)+1\} \, q^{t_{a+b-k}}
\mathrm{ADO}_N\left(\begin{matrix}
 \includegraphics[scale=0.8]{bubble0}
\end{matrix}\right).  
\end{align*}
Now we compute 
\begin{align*}
&\sum_{k = 0}^{N-1}
\{2(a+b-k)+1\} \, q^{t_{a+b-k}}
\\
&=
\sum_{k = 0}^{N-1}
(q^{2(a+b-k)+1} - q^{-2(a+b-k)-1}) \, q^{2\left((a+b-k-\frac{N-1}{2})^2-\frac{(N-1)^2}{4}\right)}
\\
&=
-q^{-\frac{(N-1)^2}{2}}
\sum_{k = 0}^{N-1}
(q^{2(a+b-k)+1-N} - q^{-2(a+b-k)-1+N})  \, q^{\frac{1}{2}(2(a+b-k)+1-N)^2} \hfill
\\
&=
-q^{-\frac{(N-1)^2}{2}}
\sum_{k = 0}^{N-1}
 \left(q^{\frac{1}{2}(2(a+b-k)+2-N)^2-\frac{1}{2}}-
q^{\frac{1}{2}(2(a+b-k)-N)^2-\frac{1}{2}}\right)\hfill
\\
&=
-q^{-\frac{(N-1)^2+1}{2}}
\left(q^{2(a+b+1-N+\frac{N}{2})^2}-
q^{2(a+b+1-N-\frac{N}{2})^2}\right)\hfill
\\
&=
-q^{-\frac{(N-1)^2+1}{2}}
\,\times
\\
&\qquad
\left(q^{2\left((a+b+1-N)^2 + N(a+b+1-N)+\frac{N^2}{4}\right)}-
q^{2\left((a+b+1-N)^2 - N(a+b+1-N)+\frac{N^2}{4}\right)}
\right)
\\
&=
q^{2a^2+2b^2+4ab+4a+4b+1-4Na-4Nb}
\{2N(a+b)\}
.  
\end{align*}
Since 
\begin{multline*}
2a^2+2b^2+4ab+4a+4b+1-4Na-4Nb
- 2t_a -2t_b
=
\\
(2a +1-N)(2b+1-N) -N^2
\end{multline*}
and
\[
\{2(a+b)+N, N\} 
=
-i^{N-1}\{2N(a+b)\}, 
\]
we have
\begin{multline*}
\frac{q^{-t_a-t_b}\, \{2a+N, N-1\}}{\{2(a+b)+N, N\}}
\sum_{k = 0}^{N-1}
\{2(a+b-k)+1\} \, q^{t_{a+b-k}}
\mathrm{ADO}_N\left(\begin{matrix}
 \includegraphics[scale=0.8]{bubble0}
\end{matrix}\right)
\\
=
q^{(2a+1-N)(2b+1-N) - N^2} 
\{2a+N, N-1\} 
\frac{\{2N(a+b)\}}{-i^{N-1}\{2N(a+b)\}}
\\
=
\frac{(-1)^{N-1}}{i^{N-1}}
q^{(2a+1-N)(2b+1-N)} 
\{2a+N, N-1\} 
\\
=
i^{N-1}
q^{(2a+1-N)(2b+1-N)} 
\{2a+N, N-1\}, 
\end{multline*}
and we get \eqref{eq:hopf}.  
\end{proof}
\subsection{Quantum $6j$ symbol}
The quantum $6j$ symbol of the ADO invariant is 
the ADO invariant for the tetrahedral graph labeled as in Figure \ref{fig:ADOtet}.  
\begin{figure}[htb]
\includegraphics[scale=0.8, alt={tetrahedral graph}]{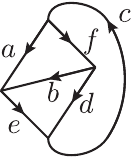}
\caption{The oriented tetrahedral graph labeled by $a$, $b$, $c$, $d$, $e$, $f$.}
\label{fig:ADOtet}
\end{figure}
The quantum $6j$ symbol $\left\{
\begin{matrix} a & b & e \\ d & c & f
\end{matrix}\right\}_q$ is given in \cite{CM} as follows.  
Let
\[
A_{xyz} = x+ y + z, \qquad B_{xyz} = x + y - z.  
\]
Then
\begin{multline}
\left\{
\begin{matrix} a & b & e \\ d & c & f
\end{matrix}\right\}_q
=
(-1)^{N-1}
\frac{\{B_{dec}\}!\{B_{abe}\}!}{\{B_{bdf}\}!\{B_{afc}\}!}
\left[\,\begin{matrix} 2e \\ A_{abe}+1-N\end{matrix}
\right]
\left[\,\begin{matrix} 2e \\ B_{ced}\end{matrix}
\,\right]^{-1}\,\times
\\
\sum_{s = \max(0, -B_{bdf} + B_{dec})}^{\min(B_{dec}, B_{afc})}
\left[\,\begin{matrix} A_{acf}+1-N \\ 2c+s+1-N\end{matrix}
\right]
\left[\,\begin{matrix} B_{acf}+s \\ B_{acf}\end{matrix}
\,\right]
\,\times
\\
\hfill
\left[\,\begin{matrix} B_{bfd}+B_{dec}-s \\ B_{bfd}\end{matrix}
\,\right]
\left[\,\begin{matrix} B_{cde}+s \\ B_{dfb}\end{matrix}
\,\right].  
\label{eq:ADO6j}
\end{multline}
\begin{lem}
By using the quantum $6j$ symbol, we can remove a triangle in the colored knotted graph as follows.  
\begin{equation}
\ADO_N\left(
\begin{matrix}
\includegraphics[scale=0.8]{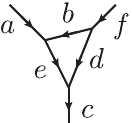}
\end{matrix}
\right)
=
\left\{
\begin{matrix} a & b & e \\ d & c & f
\end{matrix}\right\}_q
\ADO_N\left(
\begin{matrix}
\includegraphics[scale=0.8]{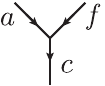}
\end{matrix}
\right).
\label{eq:removetriangle}
\end{equation}
\begin{equation}
\ADO_N\left(
\begin{matrix}
\includegraphics[scale=0.8]{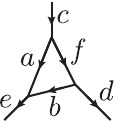}
\end{matrix}
\right)
=
\left\{
\begin{matrix} a & b & e \\ d & c & f
\end{matrix}\right\}_q
\ADO_N\left(
\begin{matrix}
\includegraphics[scale=0.8]{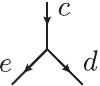}
\end{matrix}
\right).
\label{eq:removetriangle2}
\end{equation}
\end{lem}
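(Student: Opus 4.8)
The plan is to read \eqref{eq:removetriangle} as a local recoupling move and to reduce it to the definition of the quantum $6j$ symbol. First I would fix the local picture: the triangular subgraph on the left has three outer legs and three inner edges carrying the remaining colors, while the diagram on the right has those same three outer legs meeting at a single trivalent vertex. The ADO functor of \cite{CM} sends each of these two pieces to a morphism between the same tensor product of the $N$-dimensional modules $V_a$, and for an admissible triple the corresponding trivalent-vertex Hom space is one-dimensional; this is precisely what the admissibility condition in the Definition guarantees. Hence the morphism produced by the triangle is a scalar $\lambda=\lambda(a,b,c,d,e,f)$ times the morphism produced by the vertex, and $\lambda$ does not depend on how the piece is embedded in a larger colored graph. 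This already yields \eqref{eq:removetriangle} with a universal constant $\lambda$, and, after applying \eqref{eq:ADOinverse} to the inner edges, \eqref{eq:removetriangle2} with the same constant.

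It remains to identify $\lambda$, which I would do by closing the three outer legs. Capping them with one further trivalent vertex turns the left-hand graph into the oriented tetrahedral graph of Figure \ref{fig:ADOtet} and the right-hand graph into a theta (bubble) graph. By the defining property of the quantum $6j$ symbol as the ADO invariant of that tetrahedral graph --- equivalently, by the explicit formula \eqref{eq:ADO6j} --- the left side equals $\left\{\begin{matrix} a & b & e \\ d & c & f\end{matrix}\right\}_q$ times the normalizing theta factor, while by \eqref{eq:ADOtheta} the right side equals $\lambda$ times the same theta factor. Cancelling the common factor gives $\lambda=\left\{\begin{matrix} a & b & e \\ d & c & f\end{matrix}\right\}_q$, which is \eqref{eq:removetriangle}; \eqref{eq:removetriangle2} then follows from it together with the tetrahedral symmetry of the $6j$ symbol (or by reflecting the diagram and using \eqref{eq:ADOinverse}).

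A more computational route, which avoids the appeal to one-dimensionality, is to collapse the triangle directly by the same diagrammatic moves used to prove \eqref{eq:hopf}: apply the fusion relation \eqref{eq:ADOparallel} to a pair of strands of the triangle, straighten the resulting crossings with the trivalent-twist relations \eqref{eq:ADOtwist3p}, \eqref{eq:ADOtwist3n}, and collapse the leftover bubble with \eqref{eq:ADOtheta}; the summation index of \eqref{eq:ADOparallel} becomes the index $s$ of \eqref{eq:ADO6j}, and collecting the quantum-factorial and binomial factors reproduces \eqref{eq:ADO6j}. The step I expect to be the main obstacle in either route is the normalization bookkeeping: one must verify that the trivalent-vertex morphism built into the ADO functor is normalized so that contracting the triangle produces \eqref{eq:ADO6j} exactly, i.e.\ that ``$\ADO_N$ of the tetrahedral graph'' and the explicit sum \eqref{eq:ADO6j} coincide once the $\left[\begin{smallmatrix}2e\\ \cdot\end{smallmatrix}\right]$-type prefactors are accounted for, and one must justify the boundary cases in which some color lies in $\mathbb{Z}/2$ by taking limits as in \S\ref{sec:ADO}. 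Once the conventions are pinned down, both arguments are routine.
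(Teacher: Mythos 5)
Your proposal is correct and follows essentially the same route as the paper: the paper's (very terse) proof also rests on the two facts that $\ADO_N$ of the closed-up tetrahedral graph is by definition the quantum $6j$ symbol and that $\ADO_N$ of the theta graph is normalized to $1$ (via \eqref{eq:ADOtrivial} and \eqref{eq:ADOtheta}), with the one-dimensionality of the trivalent-vertex morphism space left implicit. Your write-up just makes that scalar-identification step explicit.
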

\begin{proof}
The above two relations comes from the following  formulas.  
\[
\ADO_N\left(
\begin{matrix}
\includegraphics[scale=0.8]{alexandertet}
\end{matrix}
\right)
=
\left\{
\begin{matrix} a & b & e \\ d & c & f
\end{matrix}\right\}_q,
\]
\[
\ADO_N\left(
\begin{matrix}
\includegraphics[scale=0.8]{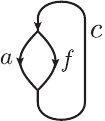}
\end{matrix}
\right)
=
\ADO_N\left(
\begin{matrix}
\includegraphics[scale=0.8]{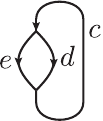}
\end{matrix}
\right)
=1.  
\]
The second formula comes from \eqref{eq:ADOtrivial} and \eqref{eq:ADOtheta}.  
\end{proof}
\begin{lem}
The ADO invariant of  the colored tetrahedral graph given in Figure \ref{fig:ADOtet} with colors
\[
a = \frac{N-1}{2}, \ 
b = \frac{N-1}{2} , \ 
c = \frac{N-1}{2}+\varepsilon ,\ 
d = \frac{N-1}{2} +\varepsilon, \ 
e = l + \varepsilon , \ 
f = k +\varepsilon
\] 
is the following.  
\begin{multline}
\left\{\begin{matrix} 
\frac{N-1}{2} &  \frac{N-1}{2}  & l
\\  
\frac{N-1}{2}+\varepsilon  &  \frac{N-1}{2}+\varepsilon & k+\varepsilon
\end{matrix}\right\}_q
=
\frac{\{N-1 + 2\varepsilon, N-1\}}{\{N-1\}!}
\, \times
\\
\sum_{s = \max(k,l)}^{\min(k+l, N-1)}
\!\!\!\!
\text{\footnotesize$\frac{ \{s\}!^2}
{\{s-k\}!\{s-l\}!^2\{k+l-s\}!
\{s-k-2\varepsilon, s-k\}
\{k\!+\!l\!-\!s\!+\!2\varepsilon, k\!+\!l\!-\!s\}}$}
.  
\label{eq:ADO6j1}
\end{multline}
Especially, if $\varepsilon= 0$, then we have
\begin{equation}
\left\{\begin{matrix} 
\frac{N-1}{2} &  \frac{N-1}{2}  & l
\\  
\frac{N-1}{2}  &  \frac{N-1}{2} & k
\end{matrix}\right\}_q
=
\sum_{s = \max(0, l+k-N+1)}^{\min(k, l)}
\frac{\{s\}!^2}
{\{s-k\}!^2 \{s-l\}!^2 \{k+l-s\}!^2}.  
\label{eq:ADO6j2}
\end{equation}
Moreover, we have the following.  
\begin{multline}
\left\{\begin{matrix} 
\frac{N-1}{2}+\delta &  \frac{N-1}{2}+ \varepsilon  & l+ \varepsilon+\delta
\\  
\frac{N-1}{2}- \delta  &  \frac{N-1}{2}+\varepsilon & k+\varepsilon - \delta
\end{matrix}\right\}_q
=
\\\hfill
\frac{ \{N-1-2\delta, N-1\}} {\{N-1\}!}
 \sum_{s = \max(k,l)}^{\min(k+l, N-1)}
\frac{ \{s\}!}
{\{s-k\}!\{s-l\}!\{k+l-s\}!}
\, \times \hfill
\\
\frac{\{s+2\varepsilon, s\}}{\{s-k+2\delta, s-k\}\{s-l-2\delta, s-l\}
\{k+l-s+2\varepsilon, k+l-s\}}
,  
\label{eq:ADO6j3}
\end{multline}
\begin{align}
\left\{\begin{matrix} 
l  &  \frac{N-1}{2}  & \frac{N-1}{2}
\\[4pt]
\varepsilon  &   \frac{N-1}{2}+\varepsilon &  \frac{N-1}{2}+\varepsilon 
\end{matrix}\right\}_q
&\!\!=
\frac{\{l+2\varepsilon, l\}}{\{l\}!}, 
\label{eq:ADO6j4}
\\
%
\left\{\begin{matrix} 
l  &  \frac{N-1}{2}-\frac{\varepsilon}{2}   & \frac{N-1}{2}-\frac{\varepsilon}{2} 
\\[4pt]
\varepsilon  &   \frac{N-1}{2}+\frac{\varepsilon}{2} &  \frac{N-1}{2}+\frac{\varepsilon}{2} 
\end{matrix}\right\}_q
&\!\!=
\frac{\{N-1-\varepsilon, N-1\}}{\{N-1\}!},
\label{eq:ADO6j6}
\\
%
\left\{\begin{matrix} 
-\varepsilon  &  \frac{N-1}{2}  & \frac{N-1}{2}\!-\! \varepsilon
\\[4pt]
N\!-\!1\!-\!l\! +\! \varepsilon  &   \frac{N-1}{2} &  \frac{N-1}{2}\!+\!\varepsilon 
\end{matrix}\right\}_q
&\!\!=
\frac{\{l-2\varepsilon, l\}}{\{l\}!},\qquad\quad
\label{eq:ADO6j5}
\\
%
\left\{\begin{matrix} 
\frac{N-1}{2}\!-\!\frac{\varepsilon+\delta}{2}  &  \frac{N-1}{2} \!+\!\frac{\varepsilon-\delta}{2}  & -\delta 
\\[4pt]
\frac{N-1}{2}\!+\! \frac{\varepsilon+\delta}{2}  &   \frac{N-1}{2}\!+\! \frac{\varepsilon-\delta}{2} &  k\!+\!\varepsilon 
\end{matrix}\right\}_q
&\!\!=
\text{\footnotesize$\frac{\{k+\varepsilon-\delta, k\}\{N-1+\varepsilon+\delta, N-1\}}{\{k+\varepsilon+\delta, k\}!\{N-1\}!}$},  
\label{eq:ADO6j7}
\\
%
\left\{\begin{matrix} 
l\!+\!\delta  &  \frac{N-1}{2}\! -\!\frac{\varepsilon+\delta}{2}  & \frac{N-1}{2}\! -\!\frac{\varepsilon-\delta}{2} 
\\[4pt]
\varepsilon  &   \frac{N-1}{2}\!+\! \frac{\varepsilon+\delta}{2} &   \frac{N-1}{2}\!+\! \frac{\varepsilon-\delta}{2}
\end{matrix}\right\}_q
&\!\!=
\frac{\{l+\varepsilon+\delta, l\}}{\{l-\varepsilon+\delta, l\}}.
\qquad\quad
\label{eq:ADO6j8}
\end{align}
\end{lem}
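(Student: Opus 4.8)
The plan is to derive all eight identities by direct substitution into the closed formula \eqref{eq:ADO6j} for the quantum $6j$ symbol, followed by elementary manipulations of the symbols $\{a\}$, $\{a\}!$ and $\{a,k\}$. Two observations carry the argument. First, the definition of the quantum binomial gives $\left[\begin{smallmatrix} a\\ b\end{smallmatrix}\right]=\{a,a-b\}/\{a-b\}!$, which replaces every binomial by a ratio of partial products and factorials. Second, since $q=\exp(\pi i/N)$ and $q^{N}=-1$, one has $\{a+N\}=-\{a\}$; hence products such as $\{a+N,N-1\}$ can be collapsed to $\pm$ a short product, and a partial product $\{a,k\}$ becomes an ordinary factorial $\{k\}!$ precisely when $a=k$.

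\emph{The main identity \eqref{eq:ADO6j1}.} First I would substitute $a=b=\tfrac{N-1}{2}$, $c=d=\tfrac{N-1}{2}+\varepsilon$, $e=l+\varepsilon$, $f=k+\varepsilon$ into \eqref{eq:ADO6j} and evaluate the relevant combinations $A_{\bullet}$, $B_{\bullet}$ (for instance $B_{dec}=B_{ced}=l+\varepsilon$, $B_{bdf}=N-1-k$, $B_{afc}=k$, $A_{abe}=N-1+l+\varepsilon$). Rewriting the factorial ratio and the two $2e$-binomials that form the prefactor, everything cancels down to $\{N-1+2\varepsilon,N-1\}/\{N-1\}!$. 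Then I would expand the four binomials under the summation and apply the index substitution $s\mapsto k+l-s$, which is exactly what carries the ``narrow'' range $\max(0,-B_{bdf}+B_{dec})\le s\le\min(B_{dec},B_{afc})$ of \eqref{eq:ADO6j} to the range $\max(k,l)\le s\le\min(k+l,N-1)$ of \eqref{eq:ADO6j1}; the surviving partial products then appear as $\{s-k-2\varepsilon,s-k\}$ and $\{k+l-s+2\varepsilon,k+l-s\}$, together with $\{s\}!^{2}$, $\{s-l\}!^{2}$, $\{s-k\}!$, $\{k+l-s\}!$ as stated.

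\emph{The special cases.} Putting $\varepsilon=0$ in \eqref{eq:ADO6j1} turns the prefactor into $1$ and both partial products into ordinary factorials, and reindexing once more gives the form \eqref{eq:ADO6j2} (the same manipulation matches it with the inner sum of \eqref{eq:borromeanJ}). Identity \eqref{eq:ADO6j3} is obtained by the identical computation, now with the two-parameter colors $a=\tfrac{N-1}{2}+\delta$, $b=\tfrac{N-1}{2}+\varepsilon$, $d=\tfrac{N-1}{2}-\delta$, $c=\tfrac{N-1}{2}+\varepsilon$, $e=l+\varepsilon+\delta$, $f=k+\varepsilon-\delta$. For \eqref{eq:ADO6j4}, \eqref{eq:ADO6j6}, \eqref{eq:ADO6j5}, \eqref{eq:ADO6j7}, \eqref{eq:ADO6j8} the chosen colors force the summation bounds $\max(0,-B_{bdf}+B_{dec})$ and $\min(B_{dec},B_{afc})$ in \eqref{eq:ADO6j} to coincide, so the sum collapses to a single term whose binomials reduce directly to the displayed closed products; alternatively some of these follow from others through the tetrahedral symmetries of the $6j$ symbol (e.g.\ \eqref{eq:ADO6j5} from \eqref{eq:ADO6j4}, and \eqref{eq:ADO6j8} as the $\delta$-deformed version of \eqref{eq:ADO6j4}), reducing the number of direct computations.

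\emph{Main obstacle.} The computations are elementary but heavy in bookkeeping, and that is where the difficulty lies: one must keep track of which $\{\cdot\}$-products are full factorials and which are genuine partial products, control the signs produced by $\{a+N\}=-\{a\}$ whenever an argument passes $N$, and verify that the index shift/reflection of $s$ matches the summation endpoints exactly in every case. The subtler issue is the limit interpretation of the degenerate colorings in \eqref{eq:ADO6j4}--\eqref{eq:ADO6j8}, where entries like $\pm\varepsilon$, $\pm\delta$ or $\tfrac{N-1}{2}\pm\tfrac{\varepsilon}{2}$ lie in $\tfrac12\mathbb{Z}$: there the symbol is read as a limit of the non-degenerate ones, and one has to confirm that the apparently divergent factors $\{0\}$ and $1/\{0\}!$ cancel before the limit is taken, i.e.\ that the ``usually divergent but sometimes convergent'' limit of \S\ref{sec:ADO} is of the convergent kind.
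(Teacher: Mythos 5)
Your plan is correct and follows essentially the same route as the paper's proof: direct substitution of the given colors into the closed formula \eqref{eq:ADO6j}, cancellation of the two $2e$-binomials in the prefactor, expansion of the remaining binomials into $\{\cdot\}$-products, and the reindexing $s\mapsto k+l-s$ to pass to the range $\max(k,l)\le s\le\min(k+l,N-1)$, with the degenerate cases \eqref{eq:ADO6j4}--\eqref{eq:ADO6j8} collapsing to a single term. The only cosmetic difference is that you flag the limit interpretation of the half-integer colorings explicitly, which the paper leaves implicit.
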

\begin{proof}
First we prove \eqref{eq:ADO6j1}. 
We have
$B_{dec} = l$, $B_{abe} =N-1-l$, $ B_{bdf}  = N-1-k$,
 $B_{afc} = k$ and
\begin{multline*}
\left\{\begin{matrix} 
\frac{N-1}{2} &  \frac{N-1}{2}  & l
\\  
\frac{N-1}{2}+\varepsilon  &  \frac{N-1}{2}+\varepsilon & k+\varepsilon
\end{matrix}\right\}_q
=
\\
(-1)^{N-1}
\frac{\{{l}\}!\{{N-1-l}\}!}{\{ {N-1-k}\}!\{{k}\}!}
\left[\,\begin{matrix} 2l  \\ l\end{matrix}
\right]
\left[\,\begin{matrix} 2l  \\ l\end{matrix}
\,\right]^{-1}\,\times
\\
\sum_{s = \max(0, l+k-N+1)}^{\min(k, l)}
\left[\,\begin{matrix} k+2\varepsilon \\ 
s+2\varepsilon\end{matrix}
\,\right]
\left[\,\begin{matrix} N-1-k+s  \\ 
N-1-k
\end{matrix}
\,\right]
\left[\,\begin{matrix} k+l-s  \\ k\end{matrix}
\,\right]
\, \times
\\
\hfill
\left[\,\begin{matrix} N-1-l+s+ 2 \varepsilon \\ k+ 2 \varepsilon
\end{matrix}
\,\right]
\\
=
\sum_{s = \max(0, l+k-N+1)}^{\min(k, l)}
\left[\,\begin{matrix} k+2\varepsilon \\ 
s+2\varepsilon\end{matrix}
\,\right]
\left[\,\begin{matrix} N-1-k+s  \\ 
N-1-k
\end{matrix}
\,\right]
\left[\,\begin{matrix} k+l-s  \\ k\end{matrix}
\,\right]
\, \times\hfill
\\
\hfill
\left[\,\begin{matrix} N-1-l+s+ 2 \varepsilon \\ k+ 2 \varepsilon
\end{matrix}
\,\right]
.  
\end{multline*}
By replacing 
$s$ to $k+l-s$, we get
\begin{multline*}
\left\{\begin{matrix} 
\frac{N-1}{2} &  \frac{N-1}{2}  & l
\\  
\frac{N-1}{2}+\varepsilon  &  \frac{N-1}{2}+\varepsilon & k+\varepsilon
\end{matrix}\right\}_q
=
\\
\sum_{s = \max(k,l)}^{\min(k+l, N-1)}
\left[\,\begin{matrix} k+2\varepsilon  \\ 
k+l -s+2\varepsilon\end{matrix}
\,\right]
\left[\,\begin{matrix} N-1+l  -s\\ 
N-1-k
\end{matrix}
\,\right]
\left[\,\begin{matrix} s \\ k\end{matrix}
\,\right]
\left[\,\begin{matrix} N-1+k -s+2\varepsilon \\ 
k+2\varepsilon
\end{matrix}
\,\right]
\\[6pt]
=
\frac{\{k+2\varepsilon, k\}}
{\{N-1-k\}! \{k\}! \{k+2\varepsilon, k\}} \, \times\hfill
\\
\sum_{s = \max(k,l)}^{\min(k+l, N-1)}
\!\!\!\!
\text{\small$\frac{\{N-1+l-s\}! \{s\}!
\{N-1+k-s+ 2 \varepsilon, N-1+k-s\}}
{\{s-l\}!\{k+l-s+2\varepsilon, k+l-s\} \{k+l-s\}!\{s-k\}!\{N-1-s\}!}$}
\\[6pt]
=
\frac{\{N-1 + 2\varepsilon, N-1\}}{\{N-1\}!}
 \sum_{s = \max(k,l)}^{\min(k+l, N-1)}
\frac{ \{s\}!^2}
{ \{s-k\}!\{s-l\}!^2\{k+l-s\}!}
\, \times \hfill
\\
\frac{1}
{\{s-k-2\varepsilon, s-k\}
\{k+l-s+2\varepsilon, k+l-s\}}
.  
\end{multline*}
%
%
\par
Next, we prove \eqref{eq:ADO6j3}. 
We have
$B_{dec} = l$, $B_{abe} =N-1-l$, $ B_{bdf}  = N-1-k$,
 $B_{afc} = k$ and
\begin{multline*}
\left\{\begin{matrix} 
\frac{N-1}{2}+\delta &  \frac{N-1}{2}+ \varepsilon  & l + \varepsilon+\delta
\\  
\frac{N-1}{2}-\delta  &  \frac{N-1}{2}+\varepsilon & k+\varepsilon- \delta
\end{matrix}\right\}_q
=
\\
(-1)^{N-1}
\frac{\{{l}\}!\{{N-1-l}\}!}{\{ {N-1-k}\}!\{{k}\}!}
\left[\,\begin{matrix} 2l + 2\varepsilon+2\delta \\ l+2\varepsilon + 2\delta\end{matrix}
\right]
\left[\,\begin{matrix} 2l + 2\varepsilon+2\delta \\ l+2\varepsilon+ 2\delta\end{matrix}
\,\right]^{-1}\,\times
\\
\sum_{s = \max(0, l+k-N+1)}^{\min(k, l)}
\left[\,\begin{matrix} k+2\varepsilon \\ 
s+2\varepsilon\end{matrix}
\,\right]
\left[\,\begin{matrix} N-1-k+s + 2 \delta \\ 
N-1-k+ 2 \delta
\end{matrix}
\,\right]
\left[\,\begin{matrix} k+l-s+2\varepsilon  \\ k+2\varepsilon\end{matrix}
\,\right]
\, \times\hfill
\\
\hfill
\left[\,\begin{matrix} N-1-l+s- 2 \delta \\ k- 2 \delta
\end{matrix}
\,\right]
\\
=
\sum_{s = \max(0, l+k-N+1)}^{\min(k, l)}
\left[\,\begin{matrix} k+2\varepsilon \\ 
s+2\varepsilon\end{matrix}
\,\right]
\left[\,\begin{matrix} N-1-k+s+ 2 \delta \\ 
N-1-k+ 2 \delta
\end{matrix}
\,\right]
\left[\,\begin{matrix} k+l-s +2\varepsilon \\ k +2\varepsilon \end{matrix}
\,\right]
\, \times\hfill
\\
\hfill
\left[\,\begin{matrix} N-1-l+s- 2 \delta \\ 
k- 2 \delta
\end{matrix}
\,\right]
.  
\end{multline*}
By replacing 
$s$ to $k+l-s$, we get
\begin{multline*}
\left\{\begin{matrix} 
\frac{N-1}{2}+\delta &  \frac{N-1}{2}+ \varepsilon  & l + \varepsilon+\delta
\\  
\frac{N-1}{2}-\delta  &  \frac{N-1}{2}+\varepsilon & k+\varepsilon- \delta
\end{matrix}\right\}_q
=
\sum_{s = \max(k,l)}^{\min(k+l, N-1)}
\left[\,\begin{matrix} k+2\varepsilon  \\ 
k+l -s+2\varepsilon\end{matrix}
\,\right]
\, \times
\\
\hfill
\left[\,\begin{matrix} N-1+l  -s+2\delta\\ 
N-1-k +2\delta
\end{matrix}
\,\right]
\left[\,\begin{matrix} s+ 2\varepsilon \\ k+ 2\varepsilon\end{matrix}
\,\right]
\left[\,\begin{matrix} N-1+k -s-2\delta \\ 
k-2\delta
\end{matrix}
\,\right]
\\[6pt]
=
\frac{\{k+2\varepsilon, k\}}
{\{N-1-k+2\delta, N-1-k\}\{k + 2 \varepsilon, k\}\{k-2\delta, k\}} \, \times\hfill
\\
\sum_{s = \max(k,l)}^{\min(k+l, N-1)}
{\text {$\frac{\{N-1+l-s+2\delta, N-1+l-s\}\{s+2\varepsilon, s\}}
{\{s-l\}!\{k+l-s+2\varepsilon, k+l-s\} \{k+l-s\}!}$
}}\,\times \hfill
\\
\hfill
{\text {$\frac{
\{N-1+k-s- 2 \delta, N-1+k-s\}}
{\{s-k\}!\{N-1-s\}!}$
}}
\\[6pt]
=
\frac{ \{N-1-2\delta, N-1\}} {\{N-1\}!}
 \sum_{s = \max(k,l)}^{\min(k+l, N-1)}
\frac{ \{s\}!}
{\{N-1\}! \{s-k\}!\{s-l\}!\{k+l-s\}!}
\, \times \hfill
\\
\frac{\{s + 2\varepsilon, s\}}
{\{s-k+2\delta, s-k\}\{s-l-2\delta, s-l\}
\{k+l-s+2\varepsilon, k+l-s\}}
.  
\end{multline*}
The relations \eqref{eq:ADO6j4}, \eqref{eq:ADO6j6}, \eqref{eq:ADO6j5}, \eqref{eq:ADO6j7} and \eqref{eq:ADO6j8} are proved as follows.  
\begin{multline*}
\left\{\begin{matrix} 
l  &  \frac{N-1}{2}  & \frac{N-1}{2}
\\[4pt]
\varepsilon  &   \frac{N-1}{2}+\varepsilon &  \frac{N-1}{2}+\varepsilon 
\end{matrix}\right\}_q
=
\\
\frac{\{0\}!\{l\}!}{\{0\}!\{l\}!}
\left[\begin{matrix}
N-1 \\ l
\end{matrix}\right]
\left[\begin{matrix}
N-1 \\ N-1
\end{matrix}\right]^{-1}
\left[\begin{matrix}
l + 2\varepsilon \\ 2\varepsilon
\end{matrix}\right]
\left[\begin{matrix}
N-1 \\ N-1
\end{matrix}\right]
\left[\begin{matrix}
2\varepsilon \\ 2\varepsilon
\end{matrix}\right]
=
\frac{\{l+2\varepsilon,l\}}{\{l\}!},
\end{multline*}
\begin{multline*}
\left\{\begin{matrix} 
l  &  \frac{N-1}{2}-\frac{\varepsilon}{2}  & \frac{N-1}{2}-\frac{\varepsilon}{2}
\\[4pt]
\varepsilon  &   \frac{N-1}{2}+\frac{\varepsilon}{2} &  \frac{N-1}{2}+\frac{\varepsilon}{2} 
\end{matrix}\right\}_q
=
\\
\frac{\{0\}!\{l\}!}{\{0\}!\{l\}!}
\left[\begin{matrix}
N-1-\varepsilon \\ l-\varepsilon
\end{matrix}\right]
\left[\begin{matrix}
N-1-\varepsilon \\ N-1-\varepsilon
\end{matrix}\right]^{-1}
\left[\begin{matrix}
l + \varepsilon \\ \varepsilon
\end{matrix}\right]
\left[\begin{matrix}
N-1-\varepsilon \\ N-1-\varepsilon
\end{matrix}\right]
\left[\begin{matrix}
2\varepsilon \\ 2\varepsilon
\end{matrix}\right]
\\
=
\frac{\{N-1-\varepsilon, N-1\}}{\{N-1\}!},
\end{multline*}
\begin{multline*}
\left\{\begin{matrix} 
-\varepsilon  &  \frac{N-1}{2}  & \frac{N-1}{2}-\varepsilon
\\[4pt]
N-1-l+\varepsilon  &   \frac{N-1}{2} &  \frac{N-1}{2}+\varepsilon 
\end{matrix}\right\}_q
=
\frac{\{N-1-l\}!\{0\}!}{\{N-1-l\}!\{0\}!}
\, \times
\\
\left[\begin{matrix}
N-1-2\varepsilon \\ -2\varepsilon
\end{matrix}\right]
\left[\begin{matrix}
N-1-2\varepsilon \\ l-2\varepsilon
\end{matrix}\right]^{-1}
\left[\begin{matrix}
0 \\ 0
\end{matrix}\right]
\left[\begin{matrix}
N-1 \\ l
\end{matrix}\right]
\left[\begin{matrix}
N-1-l+2\varepsilon \\ N-1-l+2\varepsilon
\end{matrix}\right]
\\
=
\frac{\{l-2\varepsilon, l\}}{\{l\}!},
\end{multline*}
\begin{multline*}
\left\{\begin{matrix} 
\frac{N-1}{2}-\frac{\varepsilon+\delta}{2}  &  \frac{N-1}{2} +\frac{\varepsilon-\delta}{2}  & -\delta 
\\[4pt]
\frac{N-1}{2}+ \frac{\varepsilon+\delta}{2}  &   \frac{N-1}{2}+ \frac{\varepsilon-\delta}{2} &  k+\varepsilon 
\end{matrix}\right\}_q
=
\\
\frac{\{0\}!\{N-1\}!}{\{N-1-k\}!\{k\}!}
\left[\begin{matrix} -2\delta \\ -2\delta\end{matrix}
\right]
\left[\begin{matrix} -2\delta \\ -2\delta\end{matrix}
\right]^{-1}
\left[\begin{matrix} k+\varepsilon-\delta \\ 
\varepsilon-\delta \end{matrix} \right]
\left[\begin{matrix} N-1+\varepsilon+\delta \\ 
k+\varepsilon+\delta\end{matrix}
\right]
\\
=
\frac{\{k+\varepsilon-\delta, k\}\{N-1+\varepsilon+\delta, N-1\}}{\{k+\varepsilon+\delta, k\}\{N-1\}!}.
\end{multline*}
\begin{multline*}
\left\{\begin{matrix} 
l+\delta  &  \frac{N-1}{2} -\frac{\varepsilon+\delta}{2}  & \frac{N-1}{2} -\frac{\varepsilon-\delta}{2} 
\\[4pt]
\varepsilon  &   \frac{N-1}{2}+ \frac{\varepsilon+\delta}{2} &   \frac{N-1}{2}+ \frac{\varepsilon-\delta}{2}
\end{matrix}\right\}_q
=
\\
\frac{\{0\}!\{l\}!}{\{0\}!\{l\}!}
\left[\begin{matrix} N-1-\varepsilon+\delta \\ 
l-\varepsilon+\delta \end{matrix}
\right]
\left[\begin{matrix} N-1-\varepsilon+\delta \\ 
-\varepsilon+\delta\end{matrix}
\right]^{-1}
\left[\begin{matrix} l+\varepsilon+\delta \\ 
\varepsilon+\delta \end{matrix} \right]
\left[\begin{matrix} 2\varepsilon \\ 
2\varepsilon\end{matrix}
\right]
\\
=
\frac{\{l+\varepsilon+\delta, l\}}{\{l-\varepsilon+\delta, l\}}.
\end{multline*}
\end{proof}
\subsection{Symmetry}
Here we introduce the notion of symmetry for a function defined on the set $\{0, 1, 2, \cdots, N-1\}$.  
\begin{dfn}
A function $f$  defined on $\{0, 1, 2, \cdots, N-1\}$ is 
called {\it symmetric} if $f(k) = f(N-1-k)$, and is called {\it anti-symmetric} if $f(k) = - f(N-1-k)$.  
\end{dfn}
\begin{lem}
Let 
\begin{equation}
\xi_N(k, l, s) = 
\dfrac{  \{s\}!^2}
{\{s-k\}!^2 \, \{s-l\}!^2 \, \{k+l-s\}!^2 }.
\label{eq:xi}
\end{equation}
Then it satisfies
\begin{multline}
\xi_N(k, l, s) 
= 
\xi_N(N-1-k, l, N-1-s+l) 
= 
\xi_N(k, N-1-l, N-1-s+k)
\\
=
\xi_N(N-1-k, N-1-l, N-1-k-l+s).
\label{eq:klsymmetry}
\end{multline}
\label{lem:klsymmetry}
\end{lem}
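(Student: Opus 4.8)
The plan is to derive all four equalities in \eqref{eq:klsymmetry} from the single elementary fact that $q=\exp(\pi i/N)$ satisfies $q^{N}=q^{-N}=-1$. The first step is to record its consequence
\[
\{N-1-m\}=q^{N-1-m}-q^{m+1-N}=-q^{-m-1}+q^{m+1}=\{m+1\}\qquad(m\in\mathbb{Z}),
\]
and to promote this to the form actually used: for integers $s$ and $t\ge 0$,
\[
\frac{\{N-1-s+t\}!}{\{N-1-s\}!}=\prod_{j=1}^{t}\{N-1-s+j\}=\prod_{j=1}^{t}\{s-j+1\}=\frac{\{s\}!}{\{s-t\}!},
\]
the middle equality being the preceding display with $m=s-j$. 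For the index values that occur, namely $\max(k,l)\le s\le\min(k+l,N-1)$, every argument in sight — $s$, $s-k$, $s-l$, $k+l-s$, $N-1-s+l$, $N-1-s+k$, $N-1-k$, $N-1-l$ — lies in $\{0,1,\dots,N-1\}$, where $\{j\}=2i\sin(\pi j/N)\ne 0$ for $1\le j\le N-1$, so all quantum factorials are nonzero and these ratios are legitimate.

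Next I would prove the first equality by direct substitution. Put $k'=N-1-k$, $l'=l$, $s'=N-1-s+l$; a short computation of the four factorial arguments of $\xi_N(k',l',s')$ gives $s'-k'=k+l-s$, $s'-l'=N-1-s$, $k'+l'-s'=s-k$, and $s'=N-1-s+l$, whence
\[
\xi_N(N-1-k,\,l,\,N-1-s+l)=\frac{\{N-1-s+l\}!^{2}}{\{k+l-s\}!^{2}\,\{N-1-s\}!^{2}\,\{s-k\}!^{2}}.
\]
Canceling $\{s-k\}!^{2}\{k+l-s\}!^{2}$ against the denominator of $\xi_N(k,l,s)$, the identity reduces precisely to $\{N-1-s+l\}!/\{N-1-s\}!=\{s\}!/\{s-l\}!$, which is the ratio identity above with $t=l$.

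For the second equality I would use that $\xi_N$ is manifestly symmetric in its first two arguments, so
\[
\xi_N(k,l,s)=\xi_N(l,k,s)=\xi_N(N-1-l,\,k,\,N-1-s+k)=\xi_N(k,\,N-1-l,\,N-1-s+k),
\]
applying the first equality in the middle and the $k\leftrightarrow l$ symmetry at the two ends. The third equality is the composition of the first two: applying the $l$-reflection $(k,l,s)\mapsto(k,N-1-l,N-1-s+k)$ to the triple $(N-1-k,\,l,\,N-1-s+l)$ produced by the first reflection gives $\bigl(N-1-k,\,N-1-l,\,(N-1)-(N-1-s+l)+(N-1-k)\bigr)=(N-1-k,\,N-1-l,\,N-1-k-l+s)$, the last term of \eqref{eq:klsymmetry}.

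There is no conceptual difficulty in this argument, so the step I would take most care over is the index bookkeeping: checking that after each reflection the new triple $(k',l',s')$ still satisfies $\max(k',l')\le s'\le\min(k'+l',N-1)$, so that the chain of equalities is a genuine identity of nonzero complex numbers rather than a merely formal manipulation. This is routine — each required inequality translates, under the substitutions above, into one of the hypotheses $k\le s$, $l\le s$, $s\le k+l$, $s\le N-1$ on the original triple.
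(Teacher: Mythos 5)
Your proof is correct and follows essentially the same route as the paper: direct substitution into $\xi_N$, reduction to the identity $\{N-1-m\}=\{m+1\}$ (from $q^N=-1$), and composition of the first two reflections to obtain the third. Your extra care with the index ranges and the use of the $k\leftrightarrow l$ symmetry to deduce the second equality are minor refinements of the same argument.
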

\begin{proof}
We have
\begin{multline*}
\xi_N(N-k, l, N-1-s+l)
=
\dfrac{\{N-1-s+l\}!^2}
{  \{k+l-s\}!^2 \{N-1-s\}!^2 \{s-k\}!^2 }
\\
=
\dfrac{\{s\}!^2}
{ \{s-l\}!^2 \, \{k+l-s\}!^2 \, \{s-k\}!^2 }
=
\xi_N(k, l, s).
\end{multline*}
Similarly, we have
\begin{multline*}
\xi_N(k, N-l, N-1-s+k)
=
\dfrac{\{N-1-s+k\}!^2}
{\{N-1-s\}!^2  \{k+l-s\}!^2  \{s-l\}!^2 }
\\
=
\dfrac{\{s\}!^2}
{\{s-k\}!^2 \, \{k+l-s\}!^2 \, \{s-l\}!^2 }
=
\xi_N(k, l, s).
\end{multline*}
Combining these two, we get the last equality.  
\end{proof}
These relations imply the following symmetry of the quantum $6j$ symbols. 
\begin{prop}
The quantum $6j$ symbol defined by the ADO invariant satisfies the following symmetry.  
\begin{multline}
\left\{\begin{matrix} 
\frac{N-1}{2} &  \frac{N-1}{2}  & l
\\  
\frac{N-1}{2}  &  \frac{N-1}{2} & k
\end{matrix}\right\}_q
=
\left\{\begin{matrix} 
\frac{N-1}{2} &  \frac{N-1}{2}  & l
\\  
\frac{N-1}{2}  &  \frac{N-1}{2} & N-1-k
\end{matrix}\right\}_q
=
\\
\left\{\begin{matrix} 
\frac{N-1}{2} &  \frac{N-1}{2}  & N-1-l
\\  
\frac{N-1}{2}  &  \frac{N-1}{2} & k
\end{matrix}\right\}_q
=
\left\{\begin{matrix} 
\frac{N-1}{2} &  \frac{N-1}{2}  & N-1-l
\\  
\frac{N-1}{2}  &  \frac{N-1}{2} & N-1-k
\end{matrix}\right\}_q.
\label{eq:6jsymmetry}
\end{multline}
In other words, 
$\left\{\begin{matrix} 
\frac{N-1}{2} &  \frac{N-1}{2}  & l
\\  
\frac{N-1}{2}  &  \frac{N-1}{2} & k
\end{matrix}\right\}_q$
is symmetric with respect to $k$ and $l$.  
\end{prop}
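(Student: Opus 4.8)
The plan is to reduce the proposition to the explicit summation formula for this family of quantum $6j$ symbols and then invoke the symmetries of the summand $\xi_N$ recorded in Lemma~\ref{lem:klsymmetry}. First I would put the $6j$ symbol into one convenient shape: specializing \eqref{eq:ADO6j1} to $\varepsilon=0$, where the prefactor $\{N-1,N-1\}/\{N-1\}!$ equals $1$ and each symbol $\{s-k,s-k\}$, $\{k+l-s,k+l-s\}$ collapses to $\{s-k\}!$, $\{k+l-s\}!$, gives
\[
\left\{\begin{matrix} \frac{N-1}{2} & \frac{N-1}{2} & l \\ \frac{N-1}{2} & \frac{N-1}{2} & k \end{matrix}\right\}_q
=\sum_{s=\max(k,l)}^{\min(k+l,\,N-1)}\xi_N(k,l,s),
\]
with $\xi_N$ as in \eqref{eq:xi}. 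From this the symmetry under $k\leftrightarrow l$ is immediate, since $\xi_N(k,l,s)=\xi_N(l,k,s)$ by \eqref{eq:xi} and the index set $\{\,s:\max(k,l)\le s\le\min(k+l,N-1)\,\}$ is itself symmetric in $k$ and $l$.

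For the three equalities displayed in \eqref{eq:6jsymmetry} I would reindex the sum above using \eqref{eq:klsymmetry}. To obtain invariance under $k\mapsto N-1-k$, I would substitute $s=N-1-s'+l$ and apply $\xi_N(k,l,s')=\xi_N(N-1-k,\,l,\,N-1-s'+l)$; one then has to verify that as $s'$ runs over $[\max(k,l),\min(k+l,N-1)]$ the new variable $s$ runs over exactly $[\max(N-1-k,l),\min((N-1-k)+l,N-1)]$, which is precisely the summation range attached to the $6j$ symbol with $k$ replaced by $N-1-k$; reading the reindexed sum that way gives the second equality. Exchanging the roles of $k$ and $l$ and using the second identity of \eqref{eq:klsymmetry} handles $l\mapsto N-1-l$, and composing the two substitutions (equivalently, substituting $s=N-1-k-l+s'$ and using the last identity of \eqref{eq:klsymmetry}) gives the simultaneous replacement, i.e.\ the last equality.

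The one genuinely fiddly point, and the part I would write out in full, is this range-transformation check: tracking how the $\max$ and $\min$ in the summation bounds behave under the affine substitutions of $s$. It reduces to the elementary cases $k\le l$ versus $k\ge l$ and $k+l\le N-1$ versus $k+l\ge N-1$ — for example $s'=\min(k+l,N-1)$ gives $s=N-1-k$ when $k+l\le N-1$ and $s=l$ when $k+l\ge N-1$, so $s=\max(N-1-k,l)$ in both cases — and once these are in hand each equality in \eqref{eq:6jsymmetry} follows by a direct change of summation variable, so I do not expect any conceptual obstacle.
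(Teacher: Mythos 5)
Your proposal is correct and follows essentially the same route as the paper: write the symbol as $\sum_{s=\max(k,l)}^{\min(k+l,N-1)}\xi_N(k,l,s)$, apply the identities of Lemma~\ref{lem:klsymmetry}, and reindex the sum, checking that the affine substitution in $s$ carries the old summation range onto the range attached to the transformed symbol. The paper writes out only the first equality (with exactly your substitution $s\mapsto N-1+l-s$) and declares the others similar, so your explicit verification of the $\max/\min$ endpoint transformation is the same argument with the routine details filled in.
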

\begin{proof}
We prove  the first equality.  
\begin{multline*}
\left\{\begin{matrix} 
\frac{N-1}{2} &  \frac{N-1}{2}  & l
\\  
\frac{N-1}{2}  &  \frac{N-1}{2} & k
\end{matrix}\right\}_q
=
\sum_{s=\max(k,l)}^{\min(N-1, k+l)}
\xi_N(k, l, s)
\underset{\eqref{eq:klsymmetry}}{=}
\\
\sum_{s=\max(k,l)}^{\min(N-1, k+l)}
\xi_N(N-1-k, l, N-1+l-s)
=
\sum_{s=\max(N-1-k,l)}^{\min(N-1, N-1-k+l)}
\xi_N(N-1-k, l, s)
\\
=
\left\{\begin{matrix} 
\frac{N-1}{2} &  \frac{N-1}{2}  & l
\\  
\frac{N-1}{2}  &  \frac{N-1}{2} & N-1-k
\end{matrix}\right\}_q.
\end{multline*}
The other equalities are proved similarly.  
\end{proof}
\renewcommand{\thesection}{Appendix \Alph{section}}
\section{Colored Jones invariants of some links}
\renewcommand{\thesection}{ \Alph{section}}
Here we compute the colored Jones invariant $J_{N-1}(K)$ for $K = B$, $B_1$, $B_{1,1}$, $W$, $W_P$, $T_p$ and $D_{p,r}$ given in Figure \ref{fig:borromean}.
\subsection{Colored Jones invariants and ADO invariants}
We compute $J_{N-1}(K)$ by using the ADO invariant.  
\begin{prop}
For a framed link $K$, the following holds.
\[
J_{N-1}(K)= (-1)^{N-1}\ADO_N(K^{\frac{N-1}{2}, \cdots, \frac{N-1}{2}}).
\]
\end{prop}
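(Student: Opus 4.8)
The plan is to identify both sides as the scalar produced by one and the same Reshetikhin--Turaev functor applied to a $(1,1)$-tangle, and then to absorb the difference of normalizations into a single computation on the unknot. Concretely, I would write $K$ with its given (blackboard) framing, cut one component to obtain a $(1,1)$-tangle $T_K$, and decorate every strand by the $N$-dimensional highest weight $\mathcal U_q(sl_2)$-module at the $2N$-th root of unity $q=e^{\pi i/N}$. For $J_{N-1}$ this is the module indexed by the color $N-1$, of highest weight $q^{N-1}$ in the convention defining $V_{N-1}$; for $\ADO_N$ it is the module $V_{(N-1)/2}$ of Appendix A, of highest weight $q^{(N-1)/2}$. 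After the usual $q\leftrightarrow q^{1/2}$ (equivalently $\mathbf K=q^H$ versus $q^{H/2}$) reconciliation of the two conventions these are the same simple module, and the braiding, the ribbon twist and the cup/cap morphisms entering the two constructions are given by the same universal formulae; hence the $\mathcal U_q(sl_2)$-linear endomorphism of $V_{(N-1)/2}$ assigned to $T_K$ is the same scalar in both theories, say $\lambda_K$. For a knot one may instead work at generic $q$, where $T_K$ acts by the scalar $\lambda_K(q)$, which is precisely the normalized colored Jones polynomial; the $R$-matrix and (co)evaluation morphisms of the $N$-dimensional irreducible have no pole at $q=e^{\pi i/N}$, so $\lambda_K(q)$ specializes to the root-of-unity scalar.

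Next I would compare the two normalizations. By definition $J_{N-1}(K)=V_{N-1}(K)/V_{N-1}(\bigcirc)$; in the functorial picture $V_{N-1}(K)$ is the quantum trace of $T_K$, that is $\lambda_K$ times the quantum dimension of $V_{(N-1)/2}$, and $V_{N-1}(\bigcirc)$ is that same quantum dimension, so the cancellation (carried out before specializing $q$, so that no $0/0$ arises at $q=e^{\pi i/N}$, where the quantum dimension vanishes) gives $J_{N-1}(K)=\lambda_K$. On the other side, by the construction recalled in Appendix A one has $\ADO_N(K^{(N-1)/2,\dots,(N-1)/2})=\lambda_K\cdot\left[\begin{matrix}2a+N\\ 2a+1\end{matrix}\right]^{-1}$ with $a=(N-1)/2$, and by \eqref{eq:ADOtrivial} the factor is $\ADO_N(\bigcirc^{(N-1)/2})$. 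Thus everything reduces to evaluating $\left[\begin{matrix}2N-1\\ N\end{matrix}\right]$ at $q=e^{\pi i/N}$.

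That step is short: since $q^{2N}=1$ one has $\{2N-1-j\}=q^{-(j+1)}-q^{j+1}=-\{j+1\}$ for $0\le j\le N-2$, so
\[
\left[\begin{matrix}2N-1\\ N\end{matrix}\right]
=\prod_{j=0}^{N-2}\frac{\{2N-1-j\}}{\{N-1-j\}}
=(-1)^{N-1}\,\frac{\{1\}\{2\}\cdots\{N-1\}}{\{N-1\}\{N-2\}\cdots\{1\}}
=(-1)^{N-1},
\]
hence $\ADO_N(\bigcirc^{(N-1)/2})=(-1)^{N-1}$, and therefore $\ADO_N(K^{(N-1)/2,\dots,(N-1)/2})=(-1)^{N-1}\lambda_K=(-1)^{N-1}J_{N-1}(K)$, which rearranges to the asserted identity.

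The hard part will be the first step: one must check that the conventions of \cite{CM} for the ADO functor — the normalization of the $R$-matrix, the choice of ribbon element, and the left/right (co)evaluation morphisms on $V_{(N-1)/2}$ — match, after the $q\leftrightarrow q^{1/2}$ identification, the standard data used to define $V_{N-1}$, and that the resulting $(1,1)$-tangle scalar does not depend on which component is cut or where along it. Any residual discrepancy in the braiding or pivotal structure could only change $\lambda_K$ by a power of $q$ and a sign that are already detected on the unknot, so in the worst case the $(-1)^{N-1}$ above is replaced by some fixed bookkeeping factor; the normalization $J_{N-1}(\bigcirc)=1$ pins that factor down in any event.
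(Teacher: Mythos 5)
Your argument is correct and is essentially the paper's own proof: both identify the two invariants as the scalar of the same $(1,1)$-tangle operator built from the same $R$-matrix, and then evaluate the normalizing factor $\left[\begin{smallmatrix}2N-1\\ N\end{smallmatrix}\right]=(-1)^{N-1}$ at $q=e^{\pi i/N}$ exactly as you do. The extra care you take about reconciling conventions and about cancelling the vanishing quantum dimension before specializing is a reasonable elaboration of what the paper leaves implicit, but it does not change the route.
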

\begin{proof}
The invariants $J_{N-1}(K)$ and  $\ADO_N(K^{\frac{N-1}{2}, \cdots, \frac{N-1}{2}})$ are constructed from the same $R$ matrix since $J_{N-1}(K)$ is the colored Jones invariant corresponding to the $N$ dimensional representation $V^{(N)}$  of $\mathcal{U}_q(sl_2)$ at $q = e^{\pi i/N}$.  
Let $T$ be a $(1,1)$ tangle whose closure is isotopic to  $K$, then $T$ determines a scalar operator $\alpha\, \mathrm{id} : V^{(N)} \to V^{(N)}$ by assigning the $R$ matrix to each crossing of $T$ and the factor for the minimal and maximal points.  
Then $J_{N-1}(K) = \alpha$.  
On the other hand, 
\[
\ADO_{(N)}(K) = 
\left[\begin{matrix}2N-1\\N\end{matrix}\right]^{-1}\!\!\!\!\!\alpha 
=
\frac{\{2N-1\}\{2N-2\}\cdots\{N+1\}}{\{N-1\}\{N-2\}\cdots\{1\}}= (-1)^{N-1} \alpha.
\]  
Hence we have 
$
J_{N-1}(K) =(-1)^{N-1}\ADO_{N}(K)$
.
\end{proof}
In this paper, $N$ is assumed to be odd and we have
\begin{equation}
J_{N-1}(K) =ADO_{N}(K).  
\label{eq:AJ}
\end{equation}
\begin{rmk}
The knots treated in this paper is all colored by $N-1$ and their colored Jones polynomial $J_{N-1}$ and their ADO invariant $\ADO_N$ are not depend on the framings of them.  
\end{rmk}
\subsection{Borromean rings and their variants}
Here we compute the ADO invariants of the Borromean rings $B$ and its variants $B_1$, $B_{1,1}$.  
\begin{prop}
The ADO invariants of the Borromean rings $B$ and its variants $B_1$, $B_{1,1}$ are given as follows. 
\begin{equation}
J_{N-1}(B) = 
N^2\sum_{k,l =0}^{N-1}
\sum_{s=\max(k,l)}^{\min(k+l, N-1)}
\dfrac{  \{s\}!^2}
{\{s-k\}!^2 \, \{s-l\}!^2 \, \{k+l-s\}!^2 },
\label{eq:BJ}
\end{equation}
\begin{equation}
J_{N-1}(B_1) = 
N^2q^{-\frac{(N-1)^2}{2}}
\sum_{k,l=0}^{N-1}
\sum_{s=\max(k,l)}^{\min(k+l, N-1)}
\dfrac{ q^{\left(k-\frac{N-1}{2}\right)^2} \{s\}!^2}
{\{s-k\}!^2 \, \{s-l\}!^2 \, \{k+l-s\}!^2 },
\label{eq:JonesB1}
\end{equation}
\begin{equation}
J_{N-1}(B_{1,1}) = 
N^2
q^{-\frac{(N-1)^2}{2}}
\sum_{k,l=0}^{N-1}
\sum_{s=\max(k,l)}^{\min(k+l, N-1)}
\dfrac{ q^{\left(k-\frac{N-1}{2}\right)^2} 
q^{\left(l-\frac{N-1}{2}\right)^2} \{s\}!^2}
{\{s-k\}!^2 \, \{s-l\}!^2 \, \{k+l-s\}!^2 }.
\label{eq:JonesB11}
\end{equation}
\end{prop}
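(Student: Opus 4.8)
The plan is to prove all three formulas at once through the graphical calculus of the ADO invariant set up in Appendix~A. By \eqref{eq:AJ} (recall $N$ is odd) it suffices to evaluate $\ADO_N(K^{\frac{N-1}{2},\dots,\frac{N-1}{2}})$ for $K=B$, $B_1$, $B_{1,1}$. First I would present $B$ by the diagram ``Another expression of $B$'' of Figure~\ref{fig:borromean}, reading off how the two components whose meridians are $h_1$, $h_2$ in Figure~\ref{fig:borromeangenerator} sit as unknots encircling a pair of parallel strands of the remaining component; the variants $B_1$ and $B_{1,1}$ are obtained from this diagram by inserting one, respectively two, positive twist regions on those parallel strands. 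The ADO invariant of this colored graph is then reduced by the relations already established: the parallel‑fusion relation \eqref{eq:ADOparallel} fuses each pair of parallel strands (introducing at each of the two spots a summation index running over $\{0,\dots,N-1\}$, call them $k$ and $l$), the circle‑removal lemma \eqref{eq:hopf} disengages the two encircling components, and the relations \eqref{eq:ADOtrivial}--\eqref{eq:ADOinverse}, \eqref{eq:removetriangle}, \eqref{eq:removetriangle2} together with the fact that $\ADO_N$ of the tetrahedral graph of Figure~\ref{fig:ADOtet} is the quantum $6j$ symbol collapse the remaining planar graph. The upshot is that $\ADO_N(B^{\dots})$ becomes a scalar times $\sum_{k,l=0}^{N-1}\bigl\{\frac{N-1}{2},\frac{N-1}{2},l;\frac{N-1}{2},\frac{N-1}{2},k\bigr\}_q$.

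Next I would evaluate the accumulated scalar and the twist contributions. In \eqref{eq:hopf} both circles link an edge colored $a=b=\frac{N-1}{2}$, so $2a+1-N=0$ and the Hopf factor is $i^{N-1}\{2N-1,N-1\}$; using $\{N+m\}=-\{m\}$ for $m=1,\dots,N-1$ and $\prod_{m=1}^{N-1}\sin(\pi m/N)=N\,2^{1-N}$ gives $\{2N-1,N-1\}=\{N-1\}!=i^{N-1}N$, so each application contributes $i^{2(N-1)}N=N$ for odd $N$, while the $\bigl[\begin{smallmatrix}2c+N\\2c+1\end{smallmatrix}\bigr]^{-1}$ factors from \eqref{eq:ADOparallel} cancel against the $\bigl[\begin{smallmatrix}2c+N\\2c+1\end{smallmatrix}\bigr]$ factors from the bubble removals \eqref{eq:ADOtheta}; the scalar therefore collapses to $N^2$. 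For $B_1$ and $B_{1,1}$, the inserted twist regions are removed by the twist relations \eqref{eq:ADOtwistp}--\eqref{eq:ADOtwist3n}, which pull out the factor $q^{(k-\frac{N-1}{2})^2}$ (respectively also $q^{(l-\frac{N-1}{2})^2}$) together with the $N$‑dependent overall power of $q$ recorded in \eqref{eq:JonesB1}, \eqref{eq:JonesB11}. Finally, setting $\varepsilon=0$ in \eqref{eq:ADO6j1} turns each $6j$ symbol into $\sum_{s=\max(k,l)}^{\min(k+l,N-1)}\xi_N(k,l,s)$ with $\xi_N$ as in \eqref{eq:xi} (equivalently one may use \eqref{eq:ADO6j2} and Lemma~\ref{lem:klsymmetry} to rewrite the $s$‑range), and assembling the pieces yields exactly \eqref{eq:BJ}, \eqref{eq:JonesB1}, \eqref{eq:JonesB11}.

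The main obstacle is the very first step: fixing a concrete colored‑graph presentation of $B$ (and of $B_1$, $B_{1,1}$) in which the two applications of \eqref{eq:hopf} each really land on a single $\frac{N-1}{2}$‑colored edge after fusion, and then tracking the framings, orientations, and signs appearing in \eqref{eq:hopf}--\eqref{eq:ADO6j1} carefully enough that the extracted powers of $q$ and the overall sign come out as stated; once the reduction path is fixed, everything downstream is a mechanical application of relations proved in Appendix~A. A secondary point that must be addressed explicitly is that the colors $\frac{N-1}{2}$ and the internal colors $k$, $l$ are integers, hence lie outside the strict admissibility locus $(\mathbb{C}\setminus\frac12\mathbb{Z})\cup\frac N2\mathbb{Z}$ on which the ADO $6j$ symbol is a priori defined; one therefore reads \eqref{eq:ADO6j1} at $\varepsilon=0$ as the limit of the non‑half‑integer colored invariants in the sense of Section~\ref{sec:ADO}, the existence of this limit being witnessed precisely by the explicit finite sum on its right‑hand side.
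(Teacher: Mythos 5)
Your proposal follows essentially the same route as the paper: pass to $\ADO_N$ via \eqref{eq:AJ}, fuse the two parallel strands with \eqref{eq:ADOparallel} to create the indices $k,l$, strip the two encircling components with \eqref{eq:hopf}, evaluate the residual tetrahedral graph as the $6j$ symbol \eqref{eq:ADO6j2}, and handle $B_1$, $B_{1,1}$ by the twist relations. One bookkeeping detail in your scalar accounting is off, though: in \eqref{eq:hopf} the factor $\{2a+N,N-1\}$ is governed by the color $a$ of the edge that survives, which after fusion is $k$ (resp.\ $l$), not $\tfrac{N-1}{2}$; it is the circle that carries color $\tfrac{N-1}{2}$, which is what kills the exponent $(2a+1-N)(2b+1-N)$. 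Consequently the fusion coefficients $\left[\begin{smallmatrix} 2k+N\\ 2k+1\end{smallmatrix}\right]^{-1}$ are cancelled by these $k$-dependent Hopf factors, via $\left[\begin{smallmatrix} 2k+N\\ 2k+1\end{smallmatrix}\right]^{-1}\{2k+N,N-1\}=\{N-1\}!=i^{N-1}N$, rather than by separate bubble removals \eqref{eq:ADOtheta}; as literally written your cancellation scheme would leave uncancelled $k,l$-dependent binomial factors. With that correction the two applications give $i^{2(N-1)}\{N-1\}!^2=N^2$ for odd $N$ and the rest of your argument matches the paper's proof.
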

\begin{proof}
We compute the ADO invariants instead of the colored Jones invariant.  
For the Borromean rings $B$, $\ADO^{(N)}(B)$ is computed as follows.  
\begin{align*}
&\ADO^{(N)}(B) 
=
\ADO_N\left(
\begin{matrix}
\includegraphics[scale=0.8]{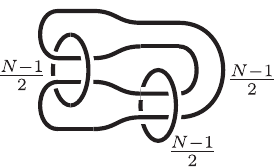}
\end{matrix}
\right)
\\
&= 
\sum_{k,l=0}^{N-1}
\left[\begin{matrix} 2k+N\\2k+1\end{matrix}\right]^{-1}
\left[\begin{matrix} 2l+N\\2l+1\end{matrix}\right]^{-1}
\ADO_N \left(
\begin{matrix}
\includegraphics[scale=0.8]{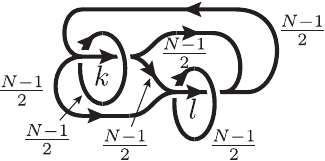}
\end{matrix}
\right)
\\
&= \ 
\sum_{k,l=0}^{N-1}
\left[\begin{matrix} 2k+N\\2k+1\end{matrix}\right]^{-1}
\left[\begin{matrix} 2l+N\\2l+1\end{matrix}\right]^{-1}
\, \times 
\\
&\qquad\qquad\qquad\qquad
\left<
\begin{matrix}
\includegraphics[scale=0.8]{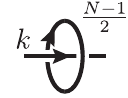}
\end{matrix}
\right>
\left<
\begin{matrix}
\includegraphics[scale=0.8]{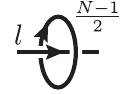}
\end{matrix}
\right>
\left<
\begin{matrix}
\includegraphics[scale=0.8]{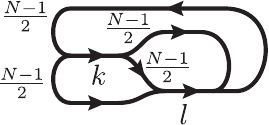}
\end{matrix}
\right>
\\
&= \sum_{k=0}^{N-1}
\left[\begin{matrix} 2k+N\\2k+1\end{matrix}\right]^{-1}
i^{N-1}  \{ 2k+N, N-1\}\, \times
\\
& \qquad\qquad\qquad\qquad
\sum_{l=0}^{N-1}
\left[\begin{matrix} 2l+N\\2l+1\end{matrix}\right]^{-1}
i^{N-1}  \{ 2l+N, N-1\} \,
\left\{\begin{matrix} 
\frac{N-1}{2}  &  \frac{N-1}{2}  & l 
\\[4pt]
\frac{N-1}{2}  &  \frac{N-1}{2}  & k 
\end{matrix}\right\}_q
\\
&=
\sum_{k,l=0}^{N-1}
\{N-1\}!^2
\sum_{s=m}^M
\dfrac{\{s\}!^2}
{\{s-k\}!^2 \, \{s-l\}!^2 \, \{k+l-s\}!^2 }
\\
&\qquad\qquad\qquad\qquad\qquad
\big(m = \max(k,l), \ \ M = \min(k+l, N-1)\big)
\\
&=
(-1)^{N-1} N^2
\sum_{k,l=0}^{N-1}
\sum_{s=m}^M
\dfrac{\{s\}!^2}
{\{s-k\}!^2 \, \{s-l\}!^2 \, \{k+l-s\}!^2 }.  
\end{align*}
%
%
\par
For $B_1$, $\ADO(B_{1})$ is computed as follows.  
%
\begin{multline*}
\ADO^{(N)}(B_1)
=
\\
\sum_{k,l=0}^{N-1}
q^{(k-\frac{N-1}{2})^2 - \frac{(N-1)^2}{4}}
\left[\begin{matrix} 2k+N\\2k+1\end{matrix}\right]^{-1}
\left[\begin{matrix} 2l+N\\2l+1\end{matrix}\right]^{-1}
\times \hfill
\\[-23pt]
\hfill
\ADO_N\left(
\begin{matrix}
\includegraphics[scale=0.7]{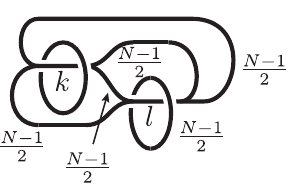}
\end{matrix}
\right)
\\=
(-1)^{N-1} N^2q^{-\frac{(N-1)^2}{4}}
\sum_{k,l=0}^{N-1}
\sum_{s=\max(k,l)}^{\min(k+l, N-1)}
\dfrac{ q^{\left(k-\frac{N-1}{2}\right)^2} \{s\}!^2}
{\{s-k\}!^2 \, \{s-l\}!^2 \, \{k+l-s\}!^2 }.  
\end{multline*}
%
%
%
%
\par
For $B_{1, 1}$, similar computation leads to \eqref{eq:JonesB11}.
%
\end{proof}
\subsection{Twisted Whitehead link}
For the twisted Whitehead link $W_p$, the ADO invariant $\mathrm{ADO}^{(N)}(W_p)$ is computed as follows.  
\begin{multline}
\mathrm{ADO}^{(N)}(W_p) = 
\mathrm{ADO}_{N}\left(
\begin{matrix}
\includegraphics[scale=0.6]{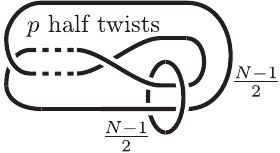}
\end{matrix}
\right) 
=
\\
\sum_{k,l=0}^{N-1}
q^{p(k-\frac{N-1}{2})^2-p\frac{(N-1)^2}{4}}
\left[\begin{matrix} 2k+N\\2k+1\end{matrix}\right]^{-1}
\left[\begin{matrix} 2l+N\\2l+1\end{matrix}\right]^{-1}
\!\!
\mathrm{ADO}_{N}\left(
\begin{matrix}
\includegraphics[scale=0.6]{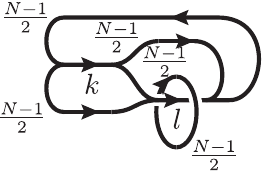}
\end{matrix}
\right)
\\
=
q^{-p\frac{(N-1)^2}{4}}
\sum_{k,l=0}^{N-1}
q^{p(k-\frac{N-1}{2})^2}
\left[\begin{matrix} 2k+N\\2k+1\end{matrix}\right]^{-1}
\left[\begin{matrix} 2l+N\\2l+1\end{matrix}\right]^{-1}
\hfill
\\
\hfill
i^{N-1}  \{ 2l+N, N-1\}\,
\left\{\begin{matrix} 
\frac{N-1}{2}  &  \frac{N-1}{2}  & l 
\\[4pt]
\frac{N-1}{2}  &  \frac{N-1}{2}  & k 
\end{matrix}\right\}_q
\\
=
q^{-p\frac{(N-1)^2}{4}}
\sum_{k,l=0}^{N-1}
q^{p(k-\frac{N-1}{2})^2}
\frac{\{N-1\}!^2\{2k+1\}}{\{2k+N, N\}}i^{N-1}
\hfill
\\
\hfill
\sum_{s=m}^M
\dfrac{\{s\}!^2}
{\{s-k\}!^2 \, \{s-l\}!^2 \, \{k+l-s\}!^2 }
\, \times\hfill
\\
\hfill\big(m = \max(k,l), \ \ M = \min(k+l, N-1)\big)
\\
=
-q^{-p\frac{(N-1)^2}{4}}
\sum_{k,l=0}^{N-1}
q^{p(k-\frac{N-1}{2})^2}
\, N^2\,\frac{\{2k+1\}}{ \{2Nk\}}\,
\times
\hfill
\\
\hfill
\sum_{s=m}^M
\dfrac{\{s\}!^2}
{\{s-k\}!^2 \, \{s-l\}!^2 \,  \{k+l-s\}!^2 }
.
\label{eq:wlimit}
\end{multline}
%
The denominator $\{2Nk\}$ of this formula is zero for integer $k$, but the numerator is also equal to zero and it must be well-defined since $J_{N-1}(W_p)$ is well-defined.  
Here we reformulate \eqref{eq:wlimit} as a limit of certain colored knotted graph.  
We prepare a lemma to treat such perturbation of colors of a knotted graph.  
\begin{lem}
For  $\varepsilon\in \mathbb{C}$ near $0$, 
the following holds. 
\begin{equation}
\lim_{\varepsilon\to 0}
\ADO_N\left(
\begin{matrix}
\includegraphics[scale=0.8]{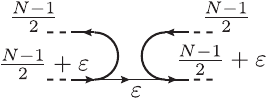}
\end{matrix}
\right)
=
\ADO_N\left(
\begin{matrix}
\includegraphics[scale=0.8]{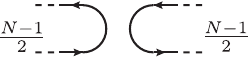}
\end{matrix}
\right).
\label{eq:small}
\end{equation}
\label{lem:small}
\end{lem}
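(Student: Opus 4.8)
The plan is to reduce the limit statement to a statement about the quantum $6j$ symbol and the behaviour of the factors $\left[\begin{matrix}2c+N\\2c+1\end{matrix}\right]^{\pm 1}$ as a color $c$ is perturbed by $\varepsilon$. The left-hand graph in \eqref{eq:small} differs from the right-hand one only in a small local piece where an edge is colored $\frac{N-1}{2}+\varepsilon$ (or similar) instead of the integral/half-integral value; by the locality of the ADO invariant (it is built vertex-by-vertex and crossing-by-crossing from the operators recalled in Section~\ref{sec:ADO}), it suffices to analyze that local tangle as a function of $\varepsilon$ and check that the limit $\varepsilon\to 0$ exists and equals the value obtained by inserting the degenerate color directly. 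So the first step is to expand both sides using \eqref{eq:ADOparallel}, \eqref{eq:ADOtheta}, and \eqref{eq:removetriangle}/\eqref{eq:removetriangle2}, writing each side as a finite sum over an internal color $s$ of a product of $q$-factorials, a power of $q$, and one of the closed-form $6j$ evaluations \eqref{eq:ADO6j1}--\eqref{eq:ADO6j8}.

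Next I would isolate exactly which factors become singular as $\varepsilon\to 0$. The danger is a factor $\left[\begin{matrix}2c+N\\2c+1\end{matrix}\right]^{-1}$ with $c\to\frac{N-1}{2}$: its reciprocal involves $\{2Nk\}$-type quantities that vanish, exactly the phenomenon flagged right after \eqref{eq:wlimit}. The key observation is that in \eqref{eq:small} such a vanishing denominator is always matched by a vanishing numerator coming from a companion factor $\{2c+N,N-1\}$ or from a quantum factorial $\{\,\cdot\,\}!$ that also degenerates, so that the ratio has a finite, explicitly computable limit — and that limit is precisely the value of the corresponding $6j$ symbol at $\varepsilon=0$ as given by \eqref{eq:ADO6j2}. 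Concretely, I would use the identities $\{a+N\}=-\{a\}$ and $\{a,k\}$-product manipulations (already exploited in the proof of \eqref{eq:hopf}) to cancel the offending $\{2N(\cdot)\}$ between numerator and denominator before taking the limit; what remains is a sum whose summand is continuous in $\varepsilon$ near $0$, so the limit passes inside the finite sum termwise.

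The main obstacle, and the step I expect to require the most care, is the bookkeeping of the degenerate colors at the trivalent vertices: when a color hits $\frac{N-1}{2}$ the admissibility ranges in the Definition of admissible coloring collapse, so the summation range of $s$ in the relevant $6j$ evaluation must be tracked carefully to make sure no term is silently dropped or doubled in the limit, and to make sure the $0/0$ cancellations occur termwise rather than only after summation. I would handle this by keeping $\varepsilon$ generic (non-half-integer) throughout, performing all cancellations at the level of individual summands using the closed forms \eqref{eq:ADO6j1} and \eqref{eq:ADO6j3} which are valid for generic $\varepsilon$, and only at the very end specializing $\varepsilon\to 0$ to land on \eqref{eq:ADO6j2}; uniform boundedness of each summand on a punctured neighborhood of $\varepsilon=0$ then justifies exchanging limit and sum, giving \eqref{eq:small}.
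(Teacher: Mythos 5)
There is a genuine gap, on two counts. First, the mechanism you rely on --- termwise $0/0$ cancellation between the factors $\left[\begin{smallmatrix}2c+N\\2c+1\end{smallmatrix}\right]^{-1}$ and companion quantum factorials, followed by exchanging the limit with the finite sum because ``the summand is continuous in $\varepsilon$ near $0$'' --- does not hold for the diagrams to which \eqref{eq:small} is applied. In the expansion of the left-hand side for $W_p$ or $D_{p,r}$ one is left with an uncancelled factor $\{2k+2\varepsilon+1\}/\{2N(k+\varepsilon)\}=\{2k+2\varepsilon+1\}/\{2N\varepsilon\}$, which has a simple pole at $\varepsilon=0$ for every $k\neq\frac{N-1}{2}$; there is no matching vanishing numerator in those terms. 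The singularity is removable only after summing over $k$ (by the anti-symmetry of the summand), which is precisely why the final formulas \eqref{eq:whitehead1} and \eqref{eq:double1} involve derivatives $\frac{d}{d\varepsilon}$ rather than plain evaluations at $\varepsilon=0$. So the step ``the limit passes inside the finite sum termwise'' fails, and with it your route to the identity.

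Second, and more fundamentally, the proposal never engages with what the lemma actually asserts. ``Inserting the degenerate color directly'' is not well defined, because $V_0=\lim_{\varepsilon\to0}V_\varepsilon$ is reducible, $V_0=V^{(0)}\oplus V'$, so the $\varepsilon=0$ specialization of the local operators is not the operator assigned to the right-hand diagram. The content of \eqref{eq:small} is that the inclusion $V_{\frac{N-1}{2}+\varepsilon}\to V_\varepsilon\otimes V_{\frac{N-1}{2}}$ and the projection back, in the limit, factor through the summand $V^{(0)}\otimes V_{\frac{N-1}{2}}\cong V_{\frac{N-1}{2}}$, hence compose to a scalar on $V_{\frac{N-1}{2}}$, and that this scalar is $1$; the paper gets the scalar by closing the two local pictures into a theta graph and an unknot and applying \eqref{eq:ADOtheta}, obtaining $\left[\begin{smallmatrix}2N+2\varepsilon-1\\N\end{smallmatrix}\right]\to(-1)^{N-1}=1$. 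Without this representation-theoretic step you cannot identify the limit of the left-hand side with the right-hand side at all: at best you could verify the equality link by link by computing both sides independently, which is circular here, since the lemma exists precisely so that the perturbed expansion can be used to compute the right-hand side. A correct write-up needs (i) the decomposition of $V_0$ and the resulting scalar statement, and (ii) the explicit evaluation of that scalar.
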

\begin{proof}
Recall that the ADO invariant is defined by using the quantum $R$ matrix associated with the non-integral highest weight representation of $\mathcal{U}_q(sl_2)$ where $q$ is a root of unity.  
Let $V_a$ is the highest weight representation with the highest weight $a$.  
Then $\dim V_a = N$  if the weight $a$ is in $(\mathbb{C}\setminus\mathbb{Z}/2) \cup (N \mathbb{Z}-1)/2$.  
The left trivalent vertex in the lefthand side of \eqref{eq:small} represents the inclusion operator $ V_{\frac{N-1}{2}+\varepsilon} \to V_\varepsilon \otimes V_{\frac{N-1}{2}}$, and the right trivalent vertex represents the projection operator $V_\varepsilon \otimes V_{\frac{N-1}{2}} \to V_{\frac{N-1}{2}+\varepsilon}$.  
The limit 
\[
\lim_{\varepsilon\to 0} V_\varepsilon
=
V_0 = 
V^{(0)} \oplus V'
\]
where $V^{(0)}$ is the trivial $1$-dimensional representation and $V'$ is the $N-1$ dimensional representation with the highest weight $-1$.  
Then
\[
\lim_{\varepsilon\to 0} V_\varepsilon \otimes V_{\frac{N-1}{2}}
=
(V^{(0)} \oplus V') \otimes V_{\frac{N-1}{2}}
=
V_{\frac{N-1}{2}} \oplus (V' \otimes V_{\frac{N-1}{2}}),  
\]
and the above inclusion operator sends $V_{\frac{N-1}{2}}$ to $V_{\frac{N-1}{2}}$ part of $V_{\frac{N-1}{2}}\oplus (V' \otimes V_{\frac{N-1}{2}})$.  
Similarly, 
the projection operator corresponding to the right vertex picks up $V_{\frac{N-1}{2}}$ part of $V_{\frac{N-1}{2}}\oplus (V' \otimes V_{\frac{N-1}{2}})$, and discards $V' \otimes V_{\frac{N-1}{2}}$ part.  
These inclusion and projection restricted to $V_{\frac{N-1}{2}}$ are scalar operators.   
Hence, in the limiting case,  we can replace the representation $V_\varepsilon$ on the thin line by the trivial representation $V^{(0)}$, and the left diagram of \eqref{eq:small} is a scalar multiple of the right diagram.  
\par
Now we compute the scalar.  
By closing the diagrams of \eqref{eq:small} as in Figure \ref{fig:smallclose}, the lefthand side diagram is the righthand side diagram times $\left[\begin{matrix}
2N+2\varepsilon-1 \\ N\end{matrix}\right]$ by \eqref{eq:ADOtheta}, which converges to $(-1)^{N-1} = 1$ as $\varepsilon$ goes to $0$.  
Therefore, the scalar we wanted is $1$.  
\begin{figure}[htb]
\[
\begin{matrix}
\begin{matrix}
\includegraphics[scale=0.7]{small2}
\end{matrix}
& \to & 
\begin{matrix}
\includegraphics[scale=0.7]{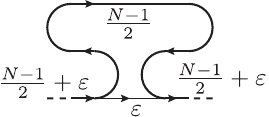}
\end{matrix}
& = & 
\text{\footnotesize$\left[\begin{matrix}
2N+2\varepsilon-1 \\ N\end{matrix}\right]
$}
\begin{matrix}
\includegraphics[scale=0.7]{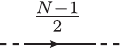}
\end{matrix}
\\
\begin{matrix}
\includegraphics[scale=0.7]{small3}
\end{matrix}
& \to & 
\begin{matrix}
\includegraphics[scale=0.7]{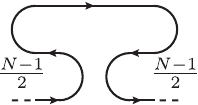}
\end{matrix}
& = & 
\begin{matrix}
\includegraphics[scale=0.8]{small30}
\end{matrix}
\end{matrix}
\]
\caption{Close the diagrams in \eqref{eq:small}.}
\label{fig:smallclose}
\end{figure}
\end{proof}
Now we compute $J_{N-1}(W_p)$.  
\begin{prop}
For the twisted whitehead link $W_p$, $J_{N-1}(W_p)$ is given as follows.  
\begin{multline}
J_{N-1}(W_p)
=
N \frac{q^{p\frac{(N-1)^2}{4}}
}{4\pi i}
 \sum_{l=0}^{N-1}
 \frac{d}{dx}
 \left(\sum_{k=0}^{N-1}
q^{p(x-\frac{N-1}{2})^2}
\{2x+1\}
 \right.\, \times \hfill
\\
\hfill
\left.\left.
\sum_{s - \frac{x-k}{2} = \max(k,l)}^{\min(k+l, N-1)}
\!
\text{\small$\frac{ \{s, s- \frac{x-k}{2}\}^2}
{\{s\!-\!x,\! s\!-\! \frac{x+k}{2}\}^2
\{s\!-\!l, \!s\!-\!l\!-\! \frac{x-k}{2}\}^2
\{x\!+\!l\!-\!s, \!\frac{x+k}{2}\!+\!l\!-\!s_1\}^2
}$}
\right)\right|_{x=k}.
\label{eq:whiteheadADO}
\end{multline}
\end{prop}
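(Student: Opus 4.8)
The plan is to reformulate the closed expression \eqref{eq:wlimit} for $\mathrm{ADO}^{(N)}(W_p)$, which equals $J_{N-1}(W_p)$ by \eqref{eq:AJ}, so as to make honest sense of the factor $\{2k+1\}/\{2Nk\}$ appearing there: since $q = e^{\pi i/N}$ forces $\{2Nk\} = q^{2Nk} - q^{-2Nk} = 0$ for every integer $k$, this factor must be read as a limit of a ratio obtained by pushing the colors of the underlying tetrahedral graph off the half-integers. Concretely, I would insert on the strand carrying the $k$-summation a thin edge of color $\varepsilon$; Lemma \ref{lem:small} guarantees that the ADO invariant of the resulting honest colored graph is finite and converges to $\mathrm{ADO}^{(N)}(W_p)$ as $\varepsilon \to 0$. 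Expanding that graph with the bubble-removal relation \eqref{eq:ADOtheta}, the triangle-removal relations \eqref{eq:removetriangle} and \eqref{eq:removetriangle2}, the twist relations \eqref{eq:ADOtwistp} through \eqref{eq:ADOtwist3n}, the Hopf-circle relation \eqref{eq:hopf}, and the perturbed $6j$-symbol evaluations \eqref{eq:ADO6j1} and \eqref{eq:ADO6j3}, produces a finite sum in which the entire $\varepsilon$-dependence lives inside quantum factorials $\{\,\cdot\,,\,\cdot\,\}$ whose arguments are shifted by multiples of $\varepsilon$, inside the twist factor $q^{p\,t}$, and inside the bracket $\{2(\,\cdot\,)+1\}$ attached to the twisted strand.

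I would then record the $\varepsilon$-dependence through a single continuous variable $x$, arranged so that $\tfrac{x-k}{2}$ is precisely the shift appearing in \eqref{eq:whiteheadADO}, and take $\varepsilon \to 0$, i.e. $x \to k$. Written in the form coming out of the diagrammatic expansion, the perturbed invariant is a sum of terms each equal to a prefactor times a ratio whose denominator is $\{2Nx\}$; since $\{2Nx\}$ vanishes at $x = k$ while (by Lemma \ref{lem:small}) the whole expression stays finite, each such ratio is of $0/0$ type. Resolving it by a first-order Taylor expansion of numerator and denominator, using $\tfrac{d}{dx}\{2Nx\}\big|_{x=k} = 2\pi i\,(q^{2Nk}+q^{-2Nk}) = 4\pi i$, replaces the ratio by $\tfrac{1}{4\pi i}\tfrac{d}{dx}(\,\cdot\,)\big|_{x=k}$ applied to the product $q^{p(x-\frac{N-1}{2})^2}\{2x+1\}$ times the $s$-sum, exactly as in \eqref{eq:whiteheadADO}; the sum over $l$ and the inner sum over $s$, whose range $\max(k,l) \le s - \tfrac{x-k}{2} \le \min(k+l,N-1)$ specializes at $x = k$ to that of \eqref{eq:wlimit}, pass through the limit unchanged.

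Finally I would check the specialization at $x = k$: the shifted factorials collapse as $\{s, s-\tfrac{x-k}{2}\} = \{s, s\} = \{s\}!$, $\{s-x, s-\tfrac{x+k}{2}\} = \{s-k, s-k\} = \{s-k\}!$, $\{s-l, s-l-\tfrac{x-k}{2}\} = \{s-l\}!$, and $\{x+l-s, \tfrac{x+k}{2}+l-s\} = \{k+l-s\}!$, so that the summand becomes $\xi_N(k,l,s)$ and the quantity inside $\tfrac{d}{dx}$ is the natural analytic continuation of the summand of \eqref{eq:wlimit}; assembling the polynomially growing prefactors and simplifying the powers of $q$ and $N$ (using $\{N-1\}! = i^{N-1}N$ for odd $N$ and $q^{2N} = 1$) then yields \eqref{eq:whiteheadADO}.

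I expect the main obstacle to be the middle step, the exact bookkeeping of how the perturbation $\varepsilon$ threads through the chain of Hopf-circle and $6j$-symbol evaluations: one must decide which of \eqref{eq:ADO6j1}--\eqref{eq:ADO6j8} governs the perturbed trivalent vertex, track all signs and $q$-powers, and confirm that the numerator and denominator of each ratio vanish to matching first order so that l'Hospital's rule introduces no spurious constant. A secondary technical point is to fix, by analytic continuation of the quantum factorial, the meaning of $\{a, b\}$ for non-integer $b$, so that the differentiated right-hand side of \eqref{eq:whiteheadADO} is well-posed to begin with.
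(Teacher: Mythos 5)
Your overall strategy is the paper's own: perturb the colors by $\varepsilon$ so that Lemma \ref{lem:small} identifies $J_{N-1}(W_p)$ with a limit of honest ADO invariants of a colored knotted graph, expand that graph through \eqref{eq:ADOparallel}, \eqref{eq:hopf}, \eqref{eq:ADO6j6} and \eqref{eq:ADO6j3}, and convert the resulting indeterminacy into the derivative $\frac{1}{4\pi i}\frac{d}{dx}(\,\cdot\,)\big|_{x=k}$ using $\frac{d}{dx}\{2Nx\}\big|_{x=k}=4\pi i$; the specialization of the shifted factorials at $x=k$ to $\xi_N(k,l,s)$ and the substitutions $s\mapsto s_1+\frac{k+l}{2}$, $k+\varepsilon\mapsto x$ are likewise as in the paper (the squares in \eqref{eq:whiteheadADO} come from the identity $\frac{d}{d\varepsilon}f(x)f(x+2\varepsilon)=\frac{d}{d\varepsilon}f(x+\varepsilon)^2$, which is part of the bookkeeping you defer).

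There is, however, one concrete gap, located exactly at the step you flag as the main obstacle. Your assertion that \emph{each} ratio with denominator $\{2Nx\}$ is of $0/0$ type is false, and it does not follow from the finiteness of the whole invariant: for fixed $k$ and $l$ the numerator $q^{p(k+\varepsilon-\frac{N-1}{2})^2}\{2k+2\varepsilon+1\}$ times the corresponding quantum $6j$ symbol does \emph{not} vanish at $\varepsilon=0$, so applying l'Hospital term by term would produce divergent individual contributions. What saves the argument is that the denominators are all equal, $\{2N(k+\varepsilon)\}=\{2N\varepsilon\}$ for every integer $k$, so the factor $1/\{2N\varepsilon\}$ is pulled out of the entire sum over $k$, and it is the \emph{summed} numerator that vanishes at $\varepsilon=0$. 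That vanishing is not automatic: it is the content of the symmetry \eqref{eq:6jsymmetry} (proved via Lemma \ref{lem:klsymmetry}), combined with the anti-symmetry of $\{2k+1\}$ and the symmetry of $q^{p(k-\frac{N-1}{2})^2}$ under $k\mapsto N-1-k$, which forces the sum over $k=0,\dots,N-1$ of $q^{p(k-\frac{N-1}{2})^2}\{2k+1\}$ times the $6j$ symbol to be zero. The same identity is needed a second time to kill the product-rule cross term coming from the $\varepsilon$-dependent prefactor $q^{-p\varepsilon^2}\{N-1-\varepsilon,N-1\}/\{N-1\}!$, so that only the derivative of the inner sum survives in \eqref{eq:whiteheadADO}. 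Without this symmetry input your plan cannot be completed as stated.
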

\begin{proof}
We first compute $\ADO_N(W_p)$ for even $p$, which is the limit of the knotted graph in Figure \ref{fig:weven} at $\varepsilon\to 0$.    
\begin{figure}[htb]
\[
\begin{matrix}
\includegraphics[alt={twisted Whitehead link}]{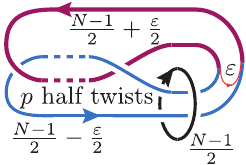}
\end{matrix}
\]
\caption{The colored knotted graph whose limit at $\varepsilon\to 0$ is $W_p$ colored by $\frac{N-1}{2}$.}
\label{fig:weven}
\end{figure}
\begin{multline*}
J_{N-1}(W_p) = \ADO^{(N)}(W_p)
\underset{\eqref{eq:small}}{=}
\lim_{\varepsilon\to 0}
\ADO_N\left(
\begin{matrix}
\includegraphics[scale=0.7]{whiteheadADOe}
\end{matrix}
\right)
 \\
 \underset{\eqref{eq:ADOparallel}, \eqref{eq:ADOinverse}}{=} 
 \lim_{\varepsilon\to 0}
\sum_{k,l=0}^{N-1}
q^{p(k+\varepsilon-\frac{N-1}{2})^2-p\varepsilon^2+p\frac{(N-1)^2}{4}}
\left[\begin{matrix} 2k+2\varepsilon+N\\2k+2\varepsilon+1\end{matrix}\right]^{-1}
\left[\begin{matrix} 2l+N\\2l+1\end{matrix}\right]^{-1}\, \times\hfill
\\ \hfill
\mathrm{ADO}_{N}\left(
\begin{matrix}
\includegraphics[scale=0.7]{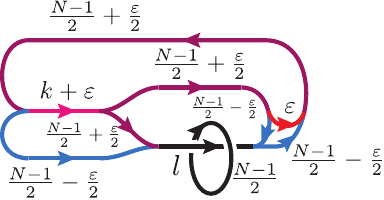}
\end{matrix}
\right) 
 \\
\underset{\eqref{eq:removetriangle2}}{= } 
 \lim_{\varepsilon\to 0}
\sum_{k,l=0}^{N-1}
q^{p(k+\varepsilon-\frac{N-1}{2})^2-p\varepsilon^2+p\frac{(N-1)^2}{4}}
\left[\begin{matrix} 2k+2\varepsilon+N\\2k+2\varepsilon+1\end{matrix}\right]^{-1}
\left[\begin{matrix} 2l+N\\2l+1\end{matrix}\right]^{-1}\, \times\hfill
\\ \hfill
\left\{\begin{matrix} 
l  &  \frac{N-1}{2}-\frac{\varepsilon}{2}  & \frac{N-1}{2} - \frac{\varepsilon}{2}
\\[4pt]
\varepsilon  &   \frac{N-1}{2}+\frac{\varepsilon}{2} &  \frac{N-1}{2}+\frac{\varepsilon}{2} 
\end{matrix}\right\}_q
\mathrm{ADO}_{N}\left(
\begin{matrix}
\includegraphics[scale=0.7]{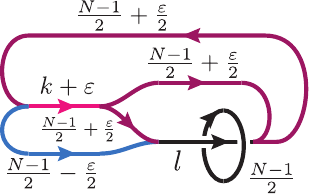}
\end{matrix}
\right) 
 \\
\underset{\eqref{eq:hopf}, \eqref{eq:ADO6j6}}{=}
q^{p\frac{(N-1)^2}{4}}
 \lim_{\varepsilon\to 0}
\sum_{k,l=0}^{N-1}
q^{p(k+\varepsilon-\frac{N-1}{2})^2-p\varepsilon^2}
\frac{\{N-1\}!^2\{2k+2\varepsilon+1\}}
{\{2k+2\varepsilon+N, N\}}i^{N-1}
\, \times\hfill
\\ \hfill
\frac{\{N-1-\varepsilon, N-1\}}{\{N-1\}!}
\left\{\begin{matrix} 
\frac{N-1}{2}-\frac{\varepsilon}{2}  &  \frac{N-1}{2}+\frac{\varepsilon}{2}  & l
\\[4pt]
\frac{N-1}{2}+\frac{\varepsilon}{2}  &   \frac{N-1}{2}+\frac{\varepsilon}{2} &  k+\varepsilon
\end{matrix}\right\}_q
 \\
\underset{\eqref{eq:ADO6j3}}{=}
N \, q^{p\frac{(N-1)^2}{4}}
 \lim_{\varepsilon\to 0}
 \sum_{l=0}^{N-1}
\frac{\{N-1-\varepsilon, N-1\}}{\{N-1\}!}
\, \times\hfill
\\ \hfill
\sum_{k=0}^{N-1}
q^{p(k+\varepsilon-\frac{N-1}{2})^2-p\varepsilon^2}
\frac{\{2k+2\varepsilon+1\}}
{\{2N(k+\varepsilon)\}}
\left\{\begin{matrix} 
\frac{N-1}{2}-\frac{\varepsilon}{2}  &  \frac{N-1}{2}+\frac{\varepsilon}{2}  & l
\\[4pt]
\frac{N-1}{2}+\frac{\varepsilon}{2}  &   \frac{N-1}{2}+\frac{\varepsilon}{2} &  k+\varepsilon
\end{matrix}\right\}_q
 \\
=
N \, q^{p\frac{(N-1)^2}{4}}
 \lim_{\varepsilon\to 0}
 \frac{1}{\{2N\varepsilon\}}
 \sum_{l=0}^{N-1}
\frac{\{N-1-\varepsilon, N-1\}}{\{N-1\}!}
\, \times\hfill
\\ \hfill
\sum_{k=0}^{N-1}
q^{p(k+\varepsilon-\frac{N-1}{2})^2-p\varepsilon^2}
\{2k+2\varepsilon+1\}
\left\{\begin{matrix} 
\frac{N-1}{2}-\frac{\varepsilon}{2}  &  \frac{N-1}{2}+\frac{\varepsilon}{2}  & l
\\[4pt]
\frac{N-1}{2}+\frac{\varepsilon}{2}  &   \frac{N-1}{2}+\frac{\varepsilon}{2} &  k+\varepsilon
\end{matrix}\right\}_q
 \\
=
N \,  \frac{q^{p\frac{(N-1)^2}{4}}}{4\pi i}
\, \times \hfill
\\
\hfill
 \frac{d}{d\varepsilon}
\left(
\frac{q^{-p\varepsilon^2}\{N-1-\varepsilon, N-1\}}{\{N-1\}!}
\right.
\, \times\hfill
\\ \hfill
\left.\left.
\sum_{k, l=0}^{N-1}
q^{p(k+\varepsilon-\frac{N-1}{2})^2}
\{2k+2\varepsilon+1\}
\left\{\begin{matrix} 
\frac{N-1}{2}-\frac{\varepsilon}{2}  &  \frac{N-1}{2}+\frac{\varepsilon}{2}  & l
\\[4pt]
\frac{N-1}{2}+\frac{\varepsilon}{2}  &   \frac{N-1}{2}+\frac{\varepsilon}{2} &  k+\varepsilon
\end{matrix}\right\}_q
\right)\right|_{\varepsilon=0}
 \\
=
N \,  \frac{q^{p\frac{(N-1)^2}{4}}
}{4\pi i}
\, \times \hfill
\\ \hfill
\left( \frac{d}{d\varepsilon}
\frac{q^{-p\varepsilon^2}\{N-1-\varepsilon, N-1\}}{\{N-1\}!}
\right)
\sum_{k,l=0}^{N-1}
q^{p(k-\frac{N-1}{2})^2}
\{2k+1\}
\left\{\begin{matrix} 
\frac{N-1}{2}  &  \frac{N-1}{2}  & l
\\[4pt]
\frac{N-1}{2}  &   \frac{N-1}{2} &  k
\end{matrix}\right\}_q \hfill
 \\
+
N  \frac{q^{p\frac{(N-1)^2}{4}}
}{4\pi i}
\, \times
\\
\hfill
\left.
 \frac{d}{d\varepsilon}
 \left(\sum_{k,l=0}^{N-1}
q^{p(k+\varepsilon-\frac{N-1}{2})^2}
\{2k+2\varepsilon+1\}
\left\{\begin{matrix} 
\frac{N-1}{2}-\frac{\varepsilon}{2}  &  \frac{N-1}{2}+\frac{\varepsilon}{2}  & l
\\[4pt]
\frac{N-1}{2}+\frac{\varepsilon}{2}  &   \frac{N-1}{2}+\frac{\varepsilon}{2} &  k+\varepsilon
\end{matrix}\right\}_q
\right)\right|_{\varepsilon=0}
\\
\underset{\eqref{eq:6jsymmetry}}{=}
N \ \frac{q^{p\frac{(N-1)^2}{4}}
}{4\pi i}\, \times \hfill
\\ 
 \sum_{l=0}^{N-1}
\left.\!\!
 \frac{d}{d\varepsilon}\!\!
 \left(\!\sum_{k=0}^{N-1}
q^{p(k+\varepsilon-\frac{N-1}{2})^2}\!
\{2k+2\varepsilon+1\}\!
\left\{\begin{matrix} 
\frac{N-1}{2}-\frac{\varepsilon}{2}  &  \frac{N-1}{2}+\frac{\varepsilon}{2}  & l
\\[4pt]
\frac{N-1}{2}+\frac{\varepsilon}{2}  &   \frac{N-1}{2}+\frac{\varepsilon}{2} &  k+\varepsilon
\end{matrix}\right\}_q
\right)\!\right|_{\varepsilon=0}
\\
\underset{\eqref{eq:ADO6j3}}{=}
N \frac{q^{p\frac{(N-1)^2}{4}}
}{4\pi i}\, \times \hfill
\\\hfill
 \sum_{l=0}^{N-1}
 \frac{d}{d\varepsilon}
 \left(\sum_{k=0}^{N-1}
\{2k+2\varepsilon+1\}
\sum_{s = \max(k,l)}^{\min(k+l, N-1)}
\frac{ q^{p(k+\varepsilon-\frac{N-1}{2})^2}
\{s\}!}{\{s-k\}!\{s-l\}!\{k+l-s\}!}
 \right.\times \hfill
\\
\hfill
\left.\left.
\frac{ \{s +\varepsilon, s\}}
{\{s-k-\varepsilon, s-k\}\{s-l+\varepsilon, s-l\}\{k+\varepsilon+l-s, k+l-s\}
}
\right)\right|_{\varepsilon=0}.
\end{multline*}
Now we replace $s$ by $s_1 + \frac{k+l}{2}$ and then use 
$\frac{d}{d\varepsilon}f(x)f(x+2\varepsilon) = \frac{d}{d\varepsilon}f(x+ \varepsilon)^2$, 
we get
\begin{multline*}
N \frac{q^{p\frac{(N-1)^2}{4}}
}{4\pi i}\!
 \sum_{l=0}^{N-1}\!
 \frac{d}{d\varepsilon}\!\!
 \left(\sum_{k=0}^{N-1}
\{2k+2\varepsilon+1\} \right.\,\times
\\
\sum_{s_1 + \frac{k+l}{2} = \max(k,l)}^{\min(k+l, N-1)}
\!\!
\frac{ q^{p(k+\varepsilon-\frac{N-1}{2})^2}
\{s_1 + \frac{k+l}{2}\}!}{\{s_1 + \frac{-k+l}{2}\}!\{s_1 + \frac{k-l}{2}\}!\{ \frac{k+l}{2}-s_1 \}!}
\times
\\
\hfill
\left.\left.
\text{\small$\frac{ \{s_1 + \frac{k+2\varepsilon+l}{2}, s_1 + \frac{k+l}{2}\}}
{\{s_1 + \frac{-k-2\varepsilon+l}{2}, s_1 + \frac{-k+l}{2}\}
\{s_1 + \frac{k + 2\varepsilon-l}{2}, s_1 + \frac{k-l}{2}\}
\{\frac{k+2\varepsilon+l}{2}-s_1, \frac{k+l}{2}-s_1\}
}$}
\right)\right|_{\varepsilon=0}
\\
=
N \frac{q^{p\frac{(N-1)^2}{4}}
}{4\pi i}
 \sum_{l=0}^{N-1}
 \frac{d}{d\varepsilon}
 \left(\sum_{k=0}^{N-1}
q^{p(k+\varepsilon-\frac{N-1}{2})^2}
\{2k+2\varepsilon+1\}
 \right.\, \times \hfill
\\
\left.\left.
\sum_{s_1 + \frac{k+l}{2} = \max(k,l)}^{\min(k+l, N-1)}
\!\!\!\!\!\!\!\!
\text{\scriptsize$\frac{ \{s_1 + \frac{k+\varepsilon+l}{2}, s_1 + \frac{k+l}{2}\}^2}
{\{s_1 \!+\! \frac{-k-\varepsilon+l}{2},\!s_1\! +\! \frac{-k+l}{2}\}^2
\{s_1\! +\! \frac{k + \varepsilon-l}{2},\!s_1\! +\! \frac{k-l}{2}\}^2
\{\frac{k+\varepsilon+l}{2}\!-\!s_1, \!\frac{k+l}{2}\!-\!s_1\}^2
}\!\!$}\!
\right)\!\right|_{\varepsilon=0}\!\!\!.
\end{multline*}
Then, by replacing $k+\varepsilon$ by $x$ and  $s_1$ by $s-\frac{x+l}{2}$,   we get
\begin{multline*}
J_{N-1}(W_p)
=
N \frac{q^{p\frac{(N-1)^2}{4}}
}{4\pi i}
 \sum_{l=0}^{N-1}
 \frac{d}{dx}
 \left(\sum_{k=0}^{N-1}
q^{p(k+\varepsilon-\frac{N-1}{2})^2}
\{2x+1\}
 \right.\, \times \hfill
\\
\hfill
\left.\left.
\sum_{s - \frac{x-k}{2} = \max(k,l)}^{\min(k+l, N-1)}
\text{\scriptsize$\frac{ \{s, s- \frac{x-k}{2}\}^2}
{\{s-x, s- \frac{x+k}{2}\}^2
\{s-l, s-l- \frac{x-k}{2}\}^2
\{x+l-s, \frac{x+k}{2}+l-s_1\}^2
}$}
\right)\right|_{x=k}.
\end{multline*}
Hence we obtained 
 \eqref{eq:whiteheadADO}.
\par
Next, we prove for odd case.  
For odd $p$, $W_p$ is considered as the limiting case of the knotted graph in Figure \ref{fig:wodd} at $\varepsilon=0$ by \eqref{eq:small}, and $J_{N-1}(W_p)$ is computed as follows.    
\begin{figure}[htb]
\[
\includegraphics[scale=1,alt={twisted Whitehead link}]{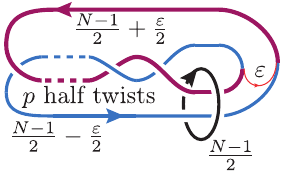}
\]
\caption{The colored knotted graph to compute $ANO_N(W_p)$ for odd $p$.}
\label{fig:wodd}
\end{figure}
\begin{multline*}
J_{N-1}(W_p) = \ADO_N(W_p)
\underset{\eqref{eq:small}}{=}
\lim_{\varepsilon\to 0}
\ADO_N\left(
\begin{matrix}
\includegraphics[scale=0.7]{whiteheadADOo}
\end{matrix}
\right)
 \underset{\eqref{eq:ADOparallel}}{=} 
 \\
 \text{\small$
 \lim_{\varepsilon\to 0}
\sum_{k,l=0}^{N-1}\!
q^{p(k+\varepsilon-\frac{N-1}{2})^2-p\varepsilon^2+p\frac{(N-1)^2}{4}}\!\!
\left[\begin{matrix} 2k+2\varepsilon+N\\2k+2\varepsilon+1\end{matrix}\right]^{-1}
\left[\begin{matrix} 2l+N\\2l+1\end{matrix}\right]^{-1}$}
\, \times\hfill
\\
\hfill
\mathrm{ADO}_{N}\!\left(
\begin{matrix}
\includegraphics[scale=0.7]{whiteheadADOo0}
\end{matrix}
\right) .
\end{multline*}
Then the rest of the computation is the same as the even $p$ case and we get \eqref{eq:whiteheadADO}.  
\end{proof}
\subsection{Twist knots and double twist knots}
Here we compute the colored Jones polynomial $J_N(D_{p, r})$ for the double twist knot $D_{p, r}$.  
Note that, if $p$ and $r$ are both odd, then $D_{p, r}$ is a two-component link.  
The following formula also holds for double twist links.  
\begin{prop}
For the double twist knot $D_{p, r}$, $J_{N-1}(D_{p, r})$ is given as follows.  
\begin{multline}
J_{N-1}(D_{p, r}) = \ADO_N(D_{p, r}) = 
\\
-\frac{N^2 \, q^{(p-r)\frac{(N-1)^2}{4}}}{16\pi^2}
\frac{\partial^2}{\partial x \partial y}
\sum_{k, l=0}^{N-1}
q^{p(x-\frac{N-1}{2})^2-r(y-\frac{N-1}{2})^2 }
\{2x+1\}\{2y+1\} \, \times
\\
\left.
 \sum_{s-\frac{x-k+y-l}{2} = \max(k,l)}^{\min(k+l, N-1)}
 \!\!\!\!\!\!\!\!\!\!\!
\text{\scriptsize$\frac{\{s, s - \frac{x-k+y-l}{2}\}^2}
{\{s -x, s - \frac{x+k+y-l}{2}\}^2
\{s -y, s - \frac{x-k+y+l}{2}\}^2
\{x\!+\!y-\!s,\! \frac{x+k+y+l}{2}\!-\!s\}^2}$}
\right|_{\text{$\begin{matrix}
x\! =\! k\\[-4pt]y\! =\! l\end{matrix}$}}\!\!\!\!.
\label{eq:double}
\end{multline}
\end{prop}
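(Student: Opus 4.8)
The plan is to reproduce, with two twist regions in place of the one twist region plus clasp of the preceding proposition, the derivation of the twisted-Whitehead formula \eqref{eq:whiteheadADO}. First I would invoke $J_{N-1}(D_{p,r})=\ADO_N(D_{p,r})$ from \eqref{eq:AJ} and present $D_{p,r}$, all strands coloured by $\tfrac{N-1}{2}$, as the $\varepsilon_1,\varepsilon_2\to 0$ limit of a colored knotted graph: insert a small circle coloured by $\varepsilon_1$ around the strand entering the first twist box and a small circle coloured by $\varepsilon_2$ around the strand entering the second twist box, so that Lemma \ref{lem:small} applies at both places and the two deformation parameters eventually produce the two partial derivatives in \eqref{eq:double}. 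The resulting graph is the same Borromean-rings-based tetrahedral graph carrying two twist boxes and two clasps, just as in the Whitehead case but symmetrically doubled.

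Next I would resolve this graph by the ADO skein calculus. Apply \eqref{eq:ADOparallel} at each twist box to expand $\ADO_N$ as a double sum over new edge colours $k$ and $l$; use the positive and negative twist relations \eqref{eq:ADOtwist3p}, \eqref{eq:ADOtwist3n} to pull the $p$ and $r$ half-twists out as the phase $q^{p(k+\varepsilon_1-\frac{N-1}{2})^2}q^{-r(l+\varepsilon_2-\frac{N-1}{2})^2}$ together with the collected constant $q^{(p-r)\frac{(N-1)^2}{4}}$, the asymmetry $p-r$ arising because the two twist boxes have opposite handedness in the chosen diagram; remove the two clasps by \eqref{eq:hopf}, producing factors of the shape $i^{N-1}\{2k+2\varepsilon_1+N,N-1\}$ and $i^{N-1}\{2l+2\varepsilon_2+N,N-1\}$; and convert the remaining triangular subgraphs to quantum $6j$ symbols by \eqref{eq:removetriangle}, \eqref{eq:removetriangle2}. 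After evaluating the collapsing $6j$ symbols appearing at the two ends by \eqref{eq:ADO6j6} and its companions, one is left with a single central $6j$ symbol deformed by $\varepsilon_1$ at one end and by $\varepsilon_2$ at the other, which \eqref{eq:ADO6j3} evaluates as an explicit sum over $s$ of quotients of quantum factorials.

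The analytic step is the double limit $\varepsilon_1,\varepsilon_2\to 0$. As in the Whitehead computation the prefactor carries $\dfrac{1}{\{2N\varepsilon_1\}\{2N\varepsilon_2\}}$ while the remaining sum vanishes to first order in each of $\varepsilon_1,\varepsilon_2$; since $\{2N\varepsilon\}=q^{2N\varepsilon}-q^{-2N\varepsilon}\sim 4\pi i\,\varepsilon$, the limit becomes $\dfrac{1}{(4\pi i)^2}\,\dfrac{\partial^2}{\partial\varepsilon_1\partial\varepsilon_2}\Big|_{\varepsilon_1=\varepsilon_2=0}=-\dfrac{1}{16\pi^2}\,\dfrac{\partial^2}{\partial\varepsilon_1\partial\varepsilon_2}\Big|_{0}$, which is the source of the constant $-\tfrac{1}{16\pi^2}$ in \eqref{eq:double}; renaming $k+\varepsilon_1\to x$ and $l+\varepsilon_2\to y$ turns this into $\tfrac{\partial^2}{\partial x\partial y}$ evaluated at $x=k$, $y=l$. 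Using the $k\leftrightarrow l$ symmetry \eqref{eq:6jsymmetry} of the central $6j$ symbol lets me discard the terms coming from differentiating the clasp factors, exactly as the Whitehead argument drops the analogous terms. Finally I would reindex the internal variable by $s\mapsto s-\tfrac{x-k+y-l}{2}$ and rewrite each $\{s-k\}!$, $\{s-l\}!$, $\{k+l-s\}!$ as the shifted products $\{s-x,\,s-\tfrac{x+k+y-l}{2}\}$, $\{s-y,\,s-\tfrac{x-k+y+l}{2}\}$, $\{x+y-s,\,\tfrac{x+k+y+l}{2}-s\}$, which produces the displayed summand of \eqref{eq:double}. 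I would then remark that all four parities of $(p,r)$ run through the same calculation, the only difference being which graph presentation of a twist box is used, after which everything collapses to the identical expression, and that the two-component case ($p$ and $r$ both odd) is covered verbatim because no step used that $D_{p,r}$ is a knot.

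The hard part will be the bookkeeping rather than any conceptual ingredient: carrying the half-integer shifts correctly through two clasps and two twist boxes so that the special evaluations \eqref{eq:ADO6j3} and \eqref{eq:ADO6j6} genuinely apply, nailing the overall sign and the powers of $i$ and of $q^{(N-1)^2/4}$, and checking that the summand really vanishes to first order in each $\varepsilon_j$, so that the mixed second derivative is the correct object and not an indeterminate form of the wrong order. This is precisely the step that fixes the constants in \eqref{eq:double}.
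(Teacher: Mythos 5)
Your overall strategy matches the paper's: write $\ADO_N(D_{p,r})$ as the $\varepsilon,\delta\to 0$ limit of a doubly perturbed colored graph, expand by \eqref{eq:ADOparallel}, evaluate the peripheral $6j$ symbols (the paper uses \eqref{eq:ADO6j7} and \eqref{eq:ADO6j8} here, not \eqref{eq:ADO6j6}), reduce the central one by \eqref{eq:ADO6j3}, and convert $\frac{1}{\{2N\varepsilon\}\{2N\delta\}}$ into $-\frac{1}{16\pi^2}\frac{\partial^2}{\partial\varepsilon\,\partial\delta}$. But there is a genuine gap at your final step. The evaluation \eqref{eq:ADO6j3} produces a summand containing \emph{first powers} of factors such as $\{s+\varepsilon+\delta,s\}$, $\{s-k-\varepsilon+\delta,s-k\}$, $\{s-l+\varepsilon-\delta,s-l\}$, with the perturbations entering through the three distinct combinations $\varepsilon+\delta$, $-\varepsilon+\delta$, $\varepsilon-\delta$, whereas the target \eqref{eq:double} contains \emph{squares} of symmetrically shifted factors. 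Your claim that a reindexing $s\mapsto s-\frac{x-k+y-l}{2}$ followed by "rewriting each $\{s-k\}!$ as a shifted product" produces the displayed summand is not a reindexing at all: it silently replaces the mixed second derivative of one function by that of a different function, and the one-variable identity $\frac{d}{d\varepsilon}f(x)f(x+2\varepsilon)\big|_{0}=\frac{d}{d\varepsilon}f(x+\varepsilon)^2\big|_{0}$ that justifies this in the Whitehead case has no pointwise two-variable analogue for $\frac{\partial^2}{\partial\varepsilon\,\partial\delta}$.

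The paper closes this gap with an argument you do not have: it introduces the two summands $f(k,l,\varepsilon,\delta)$ (first-power form) and $g(k,l,\varepsilon,\delta)$ (squared form), observes via Lemma \ref{lem:klsymmetry} that $f(k,l,0,0)$, $f(k,l,\varepsilon,0)$, $f(k,l,0,\delta)$ and the corresponding specializations of $g$ are anti-symmetric in $k$ or $l$ so that their sums over $0\le k,l\le N-1$ vanish, reduces each mixed second derivative of the double sum to $\lim\frac{1}{\varepsilon\delta}\sum_{k,l}f(k-\varepsilon,l-\delta,0,0)$ and $\lim\frac{1}{\varepsilon\delta}\sum_{k,l}g(k,l,-\varepsilon,-\delta)$ respectively, and then verifies the exact identity $f(k-\varepsilon,l-\delta,0,0)=g(k,l,-\varepsilon,-\delta)$. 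This anti-symmetry-plus-shift comparison is the heart of the proof and is what actually fixes the squared summand (and hence the constants) in \eqref{eq:double}; your appeal to \eqref{eq:6jsymmetry} to "discard the clasp-derivative terms" handles only the easier half of this. You would need to supply this two-variable argument, or an equivalent one, for the proof to go through.
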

\begin{proof}
First we prove for the case that $p$ and $r$ are both even.  
For this case, we compute the ADO invariant of $D_{p, r}$ as a limit $\varepsilon, \delta \to 0$ of the knotted graph in Figure \ref{fig:doubleee}.  
\begin{figure}[htb]
\[
\includegraphics[scale=0.9, alt={double twist knot}]{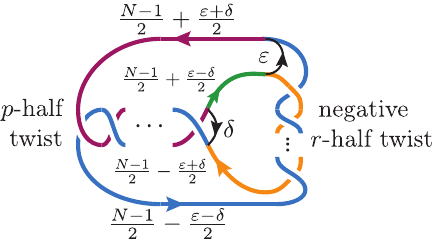}
\]
\caption{The knotted graph to compute $\ADO_{N}(D_{p, r})$ for even $p$, $r$.}
\label{fig:doubleee}
\end{figure}
\begin{multline*}
\ADO_N(D_{p,r})
=
\\
\lim_{\varepsilon, \delta\to 0}
\ADO_N\left(
\begin{matrix}
\includegraphics[scale=0.7]{doubletwistADOee}
\end{matrix}
\right)
=
q^{(p-r)\frac{(N-1)^2}{4}}
\, \times
\\
\lim_{\varepsilon, \delta\to 0}
\sum_{k, l=0}^{N-1}
q^{p(k+\varepsilon-\frac{N-1}{2})^2 - p(\frac{\varepsilon+\delta}{2})^2 - p(\frac{\varepsilon-\delta}{2})^2 }
q^{-r(l+\delta-\frac{N-1}{2})^2 + r(\frac{\varepsilon+\delta}{2})^2 + r(\frac{\varepsilon-\delta}{2})^2}
\, \times
\\
\left[\begin{matrix} 2k+2\varepsilon+N\\2k+2\varepsilon+1\end{matrix}\right]^{-1}
\left[\begin{matrix} 2l+2\delta+N\\2l+2\delta+1\end{matrix}\right]^{-1}
\left\{\begin{matrix} 
\frac{N-1}{2}-\frac{\varepsilon+\delta}{2}  &  \frac{N-1}{2} +\frac{\varepsilon-\delta}{2}  & -\delta 
\\[4pt]
\frac{N-1}{2}+ \frac{\varepsilon+\delta}{2}  &   \frac{N-1}{2}+ \frac{\varepsilon-\delta}{2} &  k+\varepsilon 
\end{matrix}\right\}_q
\, \times
\\ \hfill
\left\{\begin{matrix} 
l+\delta  &  \frac{N-1}{2} -\frac{\varepsilon+\delta}{2}  & \frac{N-1}{2} -\frac{\varepsilon-\delta}{2} 
\\[4pt]
\varepsilon  &   \frac{N-1}{2}+ \frac{\varepsilon+\delta}{2} &   \frac{N-1}{2}+ \frac{\varepsilon-\delta}{2}
\end{matrix}\right\}_q\ADO_N\left(
\begin{matrix}
\includegraphics[scale=0.7]{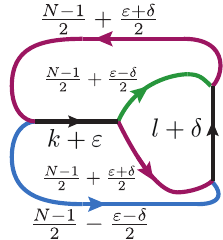}
\end{matrix}
\right)
\\
\underset{\eqref{eq:ADO6j7}, \eqref{eq:ADO6j8}}{=}
q^{(p-r)\frac{(N-1)^2}{4}}
\lim_{\varepsilon, \delta\to 0}
\sum_{k, l=0}^{N-1}
q^{p(k+\varepsilon-\frac{N-1}{2})^2-r(l+\delta-\frac{N-1}{2})^2 - (p-r)(\frac{\varepsilon^2+\delta^2}{2}) }
\, \times \hfill
\\
\text{\small$\frac{\{N-1\}!^2\{2k+2\varepsilon+1\}\{2l+2\delta+1\}}
{\{2k+2\varepsilon+N, N\}\{2l+2\delta+N, N\}}
\frac{\{k+\varepsilon-\delta, k\}\{N-1+\varepsilon+\delta, N-1\}}{\{k+\varepsilon+\delta, k\}\{N-1\}!}$}
\, \times
\\ \hfill
\frac{\{l+\varepsilon+\delta, l\}}{\{l-\varepsilon+\delta, l\}}
\ADO_N\left(
\begin{matrix}
\includegraphics[scale=0.7]{doubletwistADOee1}
\end{matrix}
\right)
\\
=
N^2\, q^{(p-r)\frac{(N-1)^2}{4}}
\lim_{\varepsilon, \delta\to 0}
\sum_{k, l=0}^{N-1}
q^{p(k+\varepsilon-\frac{N-1}{2})^2-r(l+\delta-\frac{N-1}{2})^2 - (p-r)( \frac{\varepsilon^2+\delta^2}{2})}
\, \times \hfill
\\
\frac{\{2k+2\varepsilon+1\}\{2l+2\delta+1\}}
{\{2N(k+\varepsilon)\}\{2N(l+\delta)\}}
\frac{\{k+\varepsilon-\delta, k\}\{N-1+\varepsilon+\delta, N-1\}}{\{k+\varepsilon+\delta, k\}\{N-1\}!}
\, \times
\\ \hfill
\frac{\{l+\varepsilon+\delta, l\}}{\{l-\varepsilon+\delta, l\}}
\ADO_N\left(
\begin{matrix}
\includegraphics[scale=0.7]{doubletwistADOee1}
\end{matrix}
\right)
\underset{\eqref{eq:ADO6j3}}{=}
\\
N^2 q^{(p-r)\frac{(N-1)^2}{4}}
\text{\small$
\frac{\partial^2}{\partial \varepsilon \partial \delta}
\frac{q^{ - (p-r)( \frac{\varepsilon^2+\delta^2}{2})}}
{\{2N\varepsilon\}\{2N\delta\}}
\frac{ \{N-1-2\delta, N-1\}} {\{N-1\}!} 
\frac{\{N-1+\varepsilon+\delta, N-1\}}
{\{N-1\}!}$}
\times 
\\ \hfill
\sum_{k, l=0}^{N-1}
q^{p(k+\varepsilon-\frac{N-1}{2})^2-r(l+\delta-\frac{N-1}{2})^2 }
\, \times \hfill
\\
\hfill
\frac{\{2k+2\varepsilon+1\}\{2l+2\delta+1\}}
{\{2N(k+\varepsilon)\}\{2N(l+\delta)\}}
\frac{\{k+\varepsilon-\delta, k\}}{\{k+\varepsilon+\delta, k\}}
\frac{\{l+\varepsilon+\delta, l\}}{\{l-\varepsilon+\delta, l\}}
\, \times
\\ \qquad
 \sum_{s = \max(k,l)}^{\min(k+l, N-1)}
\frac{ \{s\}!}
{\{s-k\}!\{s-l\}!\{k+l-s\}!}
\, \times \hfill
\\ \hfill
\left.\frac{\{s+\varepsilon+\delta, s\}}
{\{s\!-\!k\!-\!\varepsilon\!+\!\delta, s\!-\!k\}
\{s\!-\!l\!+\!\varepsilon\!-\!\delta, s\!-\!l\}
\{k\!+\!l\!-\!s\!+\!\varepsilon\!+\!\delta, k\!+\!l\!-\!s\}}\right|_{\varepsilon=\delta=0}
\\
=
\frac{N^2 \, q^{(p-r)\frac{(N-1)^2}{4}}}{-16\pi^2}
\, \times \hfill
\\
\hfill
\frac{\partial^2}{\partial \varepsilon \partial \delta}
q^{ - (p-r)(\frac{\varepsilon^2+\delta^2}{2}) }
\frac{\{N-1+\varepsilon+\delta, N-1\}}{\{N-1\}!}
\frac{ \{N-1-2\delta, N-1\}} {\{N-1\}!}
\, \times \hfill
\\
\sum_{k, l=0}^{N-1}
q^{p(k+\varepsilon-\frac{N-1}{2})^2-r(l+\delta-\frac{N-1}{2})^2  }
\, \times
\\
\hfill
\frac{\{2k+2\varepsilon+1\}\{2l+2\delta+1\}\{k+\varepsilon-\delta, k\}\{l+\varepsilon+\delta, l\}}{\{k+\varepsilon+\delta, k\}\{l-\varepsilon+\delta, l\}}
\, \times
\\
 \sum_{s = \max(k,l)}^{\min(k+l, N-1)}
\frac{ \{s\}!}
{\{s-k\}!\{s-l\}!\{k+l-s\}!}
\, \times \hfill
\\ \hfill
\left.\text{\small$\frac{\{s+\varepsilon+\delta, s\}}{\{s-k-\varepsilon+\delta, s-k\}\{s-l+\varepsilon-\delta, s-l\}
\{k+l-s+\varepsilon+\delta, k+l-s\}}$}\right|_{\varepsilon=\delta=0}
.
\end{multline*}
By substituting $s_1 + \frac{k+l}{2}$ intto $s$, we have
\begin{multline*}
\ADO_N(D_{p, r}) = 
\\
\frac{N^2 \, q^{(p-r)\frac{(N-1)^2}{4}}}{-16\pi^2}
\, \times \hfill
\\
\hfill
\frac{\partial^2}{\partial \varepsilon \partial \delta}
q^{(r-p)(\frac{\varepsilon^2+\delta^2}{2})}
\frac{\{N-1+\varepsilon+\delta, N-1\}}{\{N-1\}!}
\frac{ \{N-1-2\delta, N-1\}} {\{N-1\}!}
\, \times
\\
\sum_{k, l=0}^{N-1}
q^{p(k+\varepsilon-\frac{N-1}{2})^2-r(l+\delta-\frac{N-1}{2})^2 }
\,\times\hfill
\\ \hfill
\frac{\{2k+2\varepsilon+1\}\{2l+2\delta+1\}\{k+\varepsilon-\delta, k\}\{l+\varepsilon+\delta, l\}}{\{k+\varepsilon+\delta, k\}\{l-\varepsilon+\delta, l\}}
\, \times \hfill
\\
 \sum_{s_1+\frac{k+l}{2} = \max(k,l)}^{\min(k+l, N-1)}
\frac{ \{s_1 + \frac{k+l}{2}\}!}
{\{s_1 + \frac{-k+l}{2}\}!\{s_1 + \frac{k-l}{2}\}!\{\frac{k+l}{2}-s_1\}!}
\, \times \hfill
\\ \hfill
\left.\text{\scriptsize$\frac{\{s_1 + \frac{k+2\varepsilon+l+2\delta}{2}, s_1 + \frac{k+l}{2}\}}{\{s_1\! +\! \frac{-k-2\varepsilon+l+2\delta}{2}, s_1\! +\! \frac{-k+l}{2}\}\{s_1\! +\! \frac{k+2\varepsilon-l-2\delta}{2}, s_1\! +\! \frac{k-l}{2}\}
\{\frac{k+2\varepsilon+l+2\delta}{2}\!-\!s_1, \frac{k+l}{2}\!-\!s_1\}}$}\right|_{\varepsilon=\delta=0}
.
\end{multline*}
Let
\begin{multline*}
f(k, l, \varepsilon, \delta)
=
q^{p(k+\varepsilon-\frac{N-1}{2})^2-r(l+\delta-\frac{N-1}{2})^2}
\{2k+2\varepsilon+1\}\{2l+2\delta+1\} \, \times
\\
 \sum_{s_1+\frac{k+l}{2} = \max(k,l)}^{\min(k+l, N-1)}
\frac{ \{s_1 + \frac{k+l}{2}\}!}
{\{s_1 + \frac{-k+l}{2}\}!\{s_1 + \frac{k-l}{2}\}!\{\frac{k+l}{2}-s_1\}!}
\, \times \hfill
\\ \hfill
\text{\footnotesize$\frac{\{s_1 + \frac{k+2\varepsilon+l+2\delta}{2}, s_1 + \frac{k+l}{2}\}}{\{s_1 + \frac{-k-2\varepsilon+l+2\delta}{2},\! s_1 + \frac{-k+l}{2}\}\{s_1 + \frac{k+2\varepsilon-l-2\delta}{2},\! s_1 + \frac{k-l}{2}\}
\{\frac{k+2\varepsilon+l+2\delta}{2}-s_1,\! \frac{k+l}{2}-s_1\}}$}
.
\end{multline*}
Note that  $f(k, l, 0, 0)$ is anti-symmetric with respect to $k$ and $l$. 
Moreover, $f(k, l, 0, \delta)$ and $f(k, l, \varepsilon, 0)$ are anti-symmetric with respect to $k$ and $l$ respectively.  
Therefore, we have
\[
\ADO_N(D_{p, r})
=
\frac{N^2 \, q^{(p-r)\frac{(N-1)^2}{4}}}{-16\pi^2}
\frac{\partial^2}{\partial \varepsilon \partial \delta}
\left.\sum_{k, l=0}^{N-1}
f(k, l, \varepsilon, \delta)\right|_{\varepsilon=\delta=0}.
\]
By using the definition of the derivation, we have
\begin{multline*}
\frac{\partial^2}{\partial \varepsilon \partial \delta}
\left.\sum_{k, l=0}^{N-1}
f(k, l, \varepsilon, \delta)\right|_{\varepsilon=\delta=0}
=
\\
\lim_{\varepsilon, \delta\to 0}
\frac{1}{\varepsilon\delta}
\sum_{k, l=0}^{N-1} f(k, l, \varepsilon, \delta)
-
f(k, l, 0, \delta) - f(k, l, \varepsilon, 0)
+
f(k, l, 0, 0)
=
\\
\lim_{\varepsilon, \delta\to 0}
\frac{1}{\varepsilon\delta}
\sum_{k, l=0}^{N-1} f(k-\varepsilon, l-\delta, \varepsilon, \delta)
-
f(k-\varepsilon, l-\delta, 0, \delta)
\\ \hfill
 - f(k-\varepsilon, l-\delta, \varepsilon, 0)
+
f(k-\varepsilon, l-\delta, 0, 0)
\\[-5pt]
=
\lim_{\varepsilon, \delta\to 0}
\frac{1}{\varepsilon\delta}
\sum_{k, l=0}^{N-1} 
f(k-\varepsilon, l-\delta, 0, 0).
\end{multline*}
Here we use that $\frac{\partial^2}{\partial \varepsilon \partial \delta}f(k, l, \varepsilon, \delta)$ is continuous with respect to $\varepsilon$ and $\delta$ for the second equality, and use that $ f(k-\varepsilon, l-\delta, \varepsilon, \delta)$, $f(k-\varepsilon, l-\delta, 0, \delta)$ and $f(k-\varepsilon, l-\delta, \varepsilon, 0)$ are anti-symmetric with respect to $k$ or $l$ for the last equality.  
\par
On the other hand, let
\begin{multline*}
g(k, l, \varepsilon, \delta)
=
q^{p(k+\varepsilon-\frac{N-1}{2})^2-r(l+\delta-\frac{N-1}{2})^2 }
\{2k+2\varepsilon+1\}\{2l+2\delta+1\} \, \times
\\
 \sum_{s_1+\frac{k+l}{2} = \max(k,l)}^{\min(k+l, N-1)}
 \!\!\!\!\!\!\!\!
\text{\scriptsize$\frac{\{s_1\! +\! \frac{k+\varepsilon+l+\delta}{2}, s_1\! +\! \frac{k+l}{2}\}^2}
{\{s_1\! +\! \frac{-k-\varepsilon+l+\delta}{2},\! s_1\! +\! \frac{-k+l}{2}\}^2
\{s_1\! +\! \frac{k+\varepsilon-l-\delta}{2},\! s_1\! + \!\frac{k-l}{2}\}^2
\{\frac{k+\varepsilon+l+\delta}{2}\!\!-\!\!s_1,\! \frac{k+l}{2}\!\!-\!\!s_1\}^2}$}
.
\end{multline*}
Then $g(k, l, 0, 0)$, $g(k, l, \varepsilon, 0)$ and $g(k, l, 0, \delta)$ are anti-symmetric with respect to $k$ or $l$, we have
\begin{multline*}
\frac{\partial^2}{\partial \varepsilon \partial \delta}
\left.\sum_{k, l=0}^{N-1}
g(k, l, \varepsilon, \delta)\right|_{\varepsilon=\delta=0}
\\
=
\lim_{\varepsilon, \delta\to 0}
\frac{1}{\varepsilon\delta}
\sum_{k, l=0}^{N-1} g(k, l, 0, 0)
-
g(k, l, -\varepsilon, 0) - g(k, l, 0, -\delta)
+
g(k, l, -\varepsilon, -\delta)
\\[-5pt]
=
\lim_{\varepsilon, \delta\to 0}
\frac{1}{\varepsilon\delta}
\sum_{k, l=0}^{N-1} 
g(k, l, -\varepsilon, -\delta).
\end{multline*}
Now look at $f(k-\varepsilon, l-\delta, 0, 0)$ and $g(k, l, -\varepsilon, -\delta)$.  
We have
\begin{multline*}
f(k-\varepsilon, l-\delta, 0, 0) = g(k, l, -\varepsilon, -\delta) = 
\\
q^{p(k-\varepsilon-\frac{N-1}{2})^2-r(l-\delta-\frac{N-1}{2})^2 - (p-r)(\frac{\varepsilon^2+\delta^2}{2}) }
\{2k-2\varepsilon+1\}\{2l-2\delta+1\} \, \times
\\
 \sum_{s_1+\frac{k+l}{2} = \max(k,l)}^{\min(k+l, N-1)}
 \!\!\!\!\!\!\!\!\!\!\!\!
\text{\scriptsize$\frac{\{s_1 + \frac{k-\varepsilon+l-\delta}{2}, s_1 + \frac{k+l}{2}\}^2}
{\{s_1\! +\! \frac{-k+\varepsilon+l-\delta}{2},\! s_1\! +\! \frac{-k+l}{2}\}^2
\{s_1\! +\! \frac{k-\varepsilon-l+\delta}{2}, \!s_1\! +\! \frac{k-l}{2}\}^2
\{\frac{k-\varepsilon+l-\delta}{2}\!-\!s_1, \!\frac{k+l}{2}\!-\!s_1\}^2}$}.  
\end{multline*}
Therefore, 
\[
\left.\frac{\partial^2}{\partial \varepsilon \partial \delta}
\sum_{k, l=0}^{N-1}
f(k, l, \varepsilon, \delta)\right|_{\varepsilon=\delta=0}
=
\left.\frac{\partial^2}{\partial \varepsilon \partial \delta}
\sum_{k, l=0}^{N-1}
g(k, l, \varepsilon, \delta)\right|_{\varepsilon=\delta=0}
\]
and we have
\[
\ADO_N(D_{p, r})
=
\left.\frac{N^2 \, q^{(p-r)\frac{(N-1)^2}{4}}}{-16\pi^2}
\frac{\partial^2}{\partial \varepsilon \partial \delta}
\sum_{k, l=0}^{N-1}
g(k, l, \varepsilon, \delta)\right|_{\varepsilon=\delta=0}.
\]
In $g(k, l, \varepsilon, \delta)$, $k$ and $\varepsilon$ appear as $k + \varepsilon$, and $l$ and $\delta$ appear as $l+\delta$, by putting $x = k+\varepsilon$, $y = l + \delta$, we get
\begin{multline*}
J_{N-1}(D_{p, r}) = 
\\
-\frac{N^2 \, q^{(p-r)\frac{(N-1)^2}{4}}}{16\pi^2}
\frac{\partial^2}{\partial x \partial y}
\sum_{k, l=0}^{N-1}
q^{p(x-\frac{N-1}{2})^2-r(y-\frac{N-1}{2})^2 }
\{2x+1\}\{2y+1\} \, \times
\\
\left.
 \sum_{s_1+\frac{k+l}{2} = \max(k,l)}^{\min(k+l, N-1)}
 \!\!\!\!\!\!
\text{\scriptsize$\displaystyle\frac{\{s_1 + \frac{x+y}{2}, s_1 + \frac{k+l}{2}\}^2}
{\{s_1 + \frac{-x+y}{2}, s_1 + \frac{-k+l}{2}\}^2
\{s_1 + \frac{x-y}{2}, s_1 + \frac{k-l}{2}\}^2
\{\frac{x+y}{2}-s_1, \frac{k+l}{2}-s_1\}^2}$}\right|_{\text{\scriptsize$\begin{matrix}
x\! =\! k\\[-3pt]
 y\! =\! l\end{matrix}$}}\!\!\!.
\end{multline*}
By replacing $s_1$ by $s-\frac{x+y}{2}$, we get \eqref{eq:double}.    
\par
The case for even $p$ and odd $r$ is computed as the limit $\varepsilon, \delta \to 0$ of the  colored knotted graph in Figure \ref{fig:doubleeo}.  
\begin{figure}[htb]
\[
\includegraphics[scale=0.9, alt={double twist knot}]{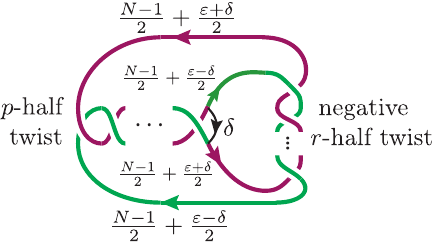}
\]
\caption{The knotted graph to compute $\ADO_{N}(D_{p, r})$ for even $p$ and odd $r$.}
\label{fig:doubleeo}
\end{figure}
\begin{multline*}
\ADO_N(D_{p,r})
=
\lim_{\varepsilon, \delta\to 0}
\ADO_N\left(
\begin{matrix}
\includegraphics[scale=0.7]{doubletwistADOeo}
\end{matrix}
\right)
\\
=
q^{(p-r)\frac{(N-1)^2}{4}}
\, \times
\\
\lim_{\varepsilon, \delta\to 0}
\sum_{k, l=0}^{N-1}
q^{p(k+\varepsilon-\frac{N-1}{2})^2 - p(\frac{\varepsilon+\delta}{2})^2 - p(\frac{\varepsilon-\delta}{2})^2 }
q^{-r(l+\delta-\frac{N-1}{2})^2 + r(\frac{\varepsilon+\delta}{2})^2 + r(\frac{\varepsilon-\delta}{2})^2}
\, \times
\\
\left[\begin{matrix} 2k+2\varepsilon+N\\2k+2\varepsilon+1\end{matrix}\right]^{-1}
\left[\begin{matrix} 2l+2\delta+N\\2l+2\delta+1\end{matrix}\right]^{-1}
\left\{\begin{matrix} 
\frac{N-1}{2}-\frac{\varepsilon+\delta}{2}  &  \frac{N-1}{2} +\frac{\varepsilon-\delta}{2}  & -\delta 
\\[4pt]
\frac{N-1}{2}+ \frac{\varepsilon+\delta}{2}  &   \frac{N-1}{2}+ \frac{\varepsilon-\delta}{2} &  k+\varepsilon 
\end{matrix}\right\}_q
\, \times
\\ \hfill
\ADO_N\left(
\begin{matrix}
\includegraphics[scale=0.7]{doubletwistADOee1}
\end{matrix}
\right)
\\
\underset{\eqref{eq:ADO6j7}, \eqref{eq:ADO6j8}}{=}
q^{(p-r)\frac{(N-1)^2}{4}}
\lim_{\varepsilon, \delta\to 0}
\sum_{k, l=0}^{N-1}
q^{p(k+\varepsilon-\frac{N-1}{2})^2-r(l+\delta-\frac{N-1}{2})^2 - (p-r)(\frac{\varepsilon^2+\delta^2}{2}) }
\, \times \hfill
\\
\frac{\{N-1\}!^2\{2k+2\varepsilon+1\}\{2l+2\delta+1\}}
{\{2k+2\varepsilon+N, N\}\{2l+2\delta+N, N\}}
\frac{\{k+\varepsilon-\delta, k\}\{N-1+\varepsilon+\delta\}}{\{k+\varepsilon+\delta, k\}\{N-1\}!}
\, \times
\\ \hfill
\ADO_N\!\!\left(
\begin{matrix}\!\!
\includegraphics[scale=0.65, alt={double twist link}]{doubletwistADOee1}
\end{matrix}\!
\right).
\end{multline*}
Then the rest of the computation to get \eqref{eq:double} is almost the same as even $p$, $r$ case.  
\par
The case for odd $p$ and $r$ is computed by the limit $\varepsilon, \delta \to 0$ of the ADO invariant of the colored knotted graph in Figure \ref{fig:doubleoo}.  
\begin{figure}[htb]
\[
\includegraphics[scale=0.9]{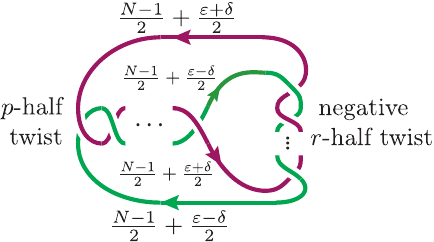}
\]
\caption{The knotted graph to compute $\ADO_{N}(D_{p, r})$ for odd $p$ and $r$.}
\label{fig:doubleoo}
\end{figure}
\begin{multline*}
\ADO_N(D_{p,r})
=
\lim_{\varepsilon, \delta\to 0}
\ADO_N\left(
\begin{matrix}
\includegraphics[scale=0.7]{doubletwistADOoo}
\end{matrix}
\right)
=
\\
q^{(p-r)\frac{(N-1)^2}{4}}\!\!
\lim_{\varepsilon, \delta\to 0}
\sum_{k, l=0}^{N-1}
q^{p(k+\varepsilon-\frac{N-1}{2})^2 - p(\frac{\varepsilon+\delta}{2})^2 - p(\frac{\varepsilon-\delta}{2})^2 }
\,\times \hfill
\\
\hfill
q^{-r(l+\delta-\frac{N-1}{2})^2 + r(\frac{\varepsilon+\delta}{2})^2 + r(\frac{\varepsilon-\delta}{2})^2}
\times\hfill
\\
\hfill
\left[\begin{matrix} 2k+2\varepsilon+N\\2k+2\varepsilon+1\end{matrix}\right]^{-1}
\left[\begin{matrix} 2l+2\delta+N\\2l+2\delta+1\end{matrix}\right]^{-1}
\ADO_N\left(
\begin{matrix}
\includegraphics[scale=0.7]{doubletwistADOee1}
\end{matrix}
\right)
\\
\underset{\eqref{eq:ADO6j7}, \eqref{eq:ADO6j8}}{=}
q^{(p-r)\frac{(N-1)^2}{4}}
\lim_{\varepsilon, \delta\to 0}
\sum_{k, l=0}^{N-1}
q^{p(k+\varepsilon-\frac{N-1}{2})^2-r(l+\delta-\frac{N-1}{2})^2 - (p-r)(\frac{\varepsilon^2+\delta^2}{2}) }
\, \times 
\\
\frac{\{N-1\}!^2\{2k+2\varepsilon+1\}\{2l+2\delta+1\}}
{\{2k+2\varepsilon+N, N\}\{2l+2\delta+N, N\}}
\ADO_N\left(
\begin{matrix}
\includegraphics[scale=0.7]{doubletwistADOee1}
\end{matrix}
\right).
\end{multline*}
Then the rest of the computation to get \eqref{eq:double} is almost same as even $p$, $r$ case.  
\end{proof}
%
%
%
%
%
%
%
The twist knot $T_p$ is equal to $D_{p, 2}$, so \eqref{eq:double} also gives a formula for $J_{N-1}(T_p)$.   
%
%
\section{Asymptotics}
Here we investigate the asymptotic behavior of the colored Jones invariant for large $N$.  
We also reformulate the sum over the parameter $s$ inside the quantum $6j$ symbol.  
\subsection{Quantum dilogarithm function}
As a continuous version of the quantum factorial $\{n\}!$, we introduce Fateev's quantum dilogarithm function $\varphi_N(x)$, which is the analytic continuation of the following function defined for $0 < x < 1$.    
\[
\varphi_N(x) = 
\int_{-\infty}^\infty \frac{e^{(2x-1)t}\, dt}{4t \sinh t \sinh(t/N)}.
\]
It is shown in \cite{F} that
\[
\varphi_N({\alpha+\frac{1}{2N}}) - \varphi_N({\alpha-\frac{1}{2N}})
=
-\log (1-e^{2 \pi i\alpha}).  
\]
This implies that
\begin{multline}
\{x, n\} = \{x\}\{x-1\}\cdots\{x-n+1\}
=
\\
(-1)^n q^{-\frac{(2x-n+1)n}{2}} (1-q^{2x})(1-q^{2x-2}) \cdots (1-q^{2x-2n+2})
=
\\
(-1)^n q^{-\frac{(2x-n+1)n}{2}} e^{\varphi(\frac{2x-2n+1}{2N}) - \varphi(\frac{2x+1}{2N})}.
\label{eq:qdilog}
\end{multline}
For fixed any sufficient small $\delta>$ and any $M > 0$, 
\[
\varphi_N(t)=\frac{N}{2\pi i}\Li(e^{2\pi i t})+ O(\frac{1}{N})
\]  
in the domain
\[
\{t \in \mathbb{C} \mid 
\delta < \mathrm{Re}\, t < 1-\delta, \ 
\left|\mathrm{Im}\,t\right| <M\}
\]
by Proposition A.1 of \cite{O1}.  
It is also shown by Lemma A of \cite{O1} that
\[
\varphi_N(\frac{1}{2N}) = \frac{N}{2\pi i}\frac{\pi^2}{6} + O(\log N), 
\quad
\varphi_N(1-\frac{1}{2N}) = \frac{N}{2\pi i}\frac{\pi^2}{6} + O(\log N).
\]
\subsection{Reformulation of the colored Jones polynomials}
Here we reformulate the colored Jones polynomials \eqref{eq:whiteheadADO} and \eqref{eq:double} by using the dilogarithm function.  
We first reformulate $J_{N-1}(W_p)$.  
Let
\begin{multline*}
\zeta_N(x, k, l, s) =
\\
 \frac{ \{s, s- \frac{x-k}{2}\}^2}
{\{s-x, s- \frac{x+k}{2}\}^2
\{s-l, s-l- \frac{x-k}{2}\}^2
\{x+l-s, \frac{x+k}{2}+l-s_1\}^2}.  
\end{multline*}
Then
\begin{multline*}
\left.
\frac{d}{dx}
\zeta_N(x, k, l, s) 
\right|_{x=k}
=
\frac{d}{dx}
q^{(4s^2 -8(l+x)s + 3x^2 + 2kx -k^2 + 4lx + 4l^2)/2}\, \times
\hfill\\
\exp\Big( - 2\varphi_N(\tfrac{2s+1}{2N})+2\varphi_N(\tfrac{2s-2x+1}{2N}) +
2\varphi_N(\tfrac{2s-2l+1}{2N})
\hfill\\
\left.
+2\varphi_N(\tfrac{2x+2l-2s+1}{2N}) 
- 2\varphi_N(\tfrac{x-k+1}{2N}) -2 \varphi_N(\tfrac{k-x+1}{2N})\Big)\right|_{x=k}
\\
=
\frac{d}{dx}
q^{2(s^2 -2(l+x)s +x^2  + lx + l^2)}
\exp\Big( - 2\varphi_N(\tfrac{2s+1}{2N})+2\varphi_N(\tfrac{2s-2x+1}{2N}) +
\hfill\\
\left.
2\varphi_N(\tfrac{2s-2l+1}{2N})+2\varphi_N(\tfrac{2x+2l-2s+1}{2N})
-4 \varphi_N(\tfrac{1}{2N})\Big)\right|_{x=k}
\end{multline*}
since
\[
\left.
\frac{d}{dx}
\Big(
\varphi_N(\tfrac{x-k+1}{2N}) + \varphi_N(\tfrac{k-x+1}{2N})
\Big)
\right|_{x=k} = 0  
, 
\]
\[\left.2kx - k^2\right|_{x=k} = 
\left.x^2\right|_{x=k} = k^2,\qquad
\left.\frac{d}{dx}(2kx - k^2)\right|_{x=k} = 
\left.\frac{d}{dx}x^2\right|_{x=k}
= 2k.
\]  
Let 
\begin{multline*}
\xi_N(x,  l, s)
=
q^{2(s^2 -2(l+x)s +x^2  + lx + l^2)}
\exp\Big( - 2\varphi_N(\tfrac{2s+1}{2N})+
\\
\hfill2\varphi_N(\tfrac{2s-2x+1}{2N}) +
2\varphi_N(\tfrac{2s-2l+1}{2N})+2\varphi_N(\tfrac{2x+2l-2s+1}{2N})
-4 \varphi_N(\tfrac{1}{2N})\Big).
\end{multline*}
By using the relation between $\frac{2\pi i}{N}\varphi_N(t)$ and $\Li(e^{2\pi i t})$, we have
\begin{multline*}
\xi_N(x,  l, s)
=
E_N(x, l, s)
q^{2(s^2 -2(l+x)s + x^2 + lx + l^2)}
\exp\Big(\frac{N}{2\pi i}
\big(
- 2\Li(q^{2s + 1})
\\ \hfill
+2\Li(q^{2s -2x+1})
+
2\Li(q^{2s-2l+1})
+
2\Li(q^{2x+2l - 2s +1})
-
\frac{\pi^2}{3} 
\big)\Big)
\end{multline*}
where $E_N(x, l, s)$ is a function which grows at most a polynomially  with respect to $N$.  
Therefore, 
\begin{multline}
J_{N-1}(W_p)
=
\\
N \frac{q^{p\frac{(N-1)^2}{4}}
}{4\pi i}
 \sum_{k,l=0}^{N-1}\!
 \frac{d}{dx}\!\!
 \left(\!
q^{p(x-\frac{N-1}{2})^2}
\{2x+1\}
\left.
\sum_{s - \frac{x-k}{2} = \max(k,l)}^{\min(k+l, N-1)}
\xi_N(x,  l, s)\!\right)\!\right|_{x=k}.
\label{eq:whitehead1}
\end{multline}
\par
Similarly, we have
\begin{multline}
J_{N-1}(D_{p,r})
=
N \frac{q^{p\frac{(N-1)^2}{4}-r\frac{(N-1)^2}{4}}
}{4\pi i}
\, \times
\\
 \sum_{k,l=0}^{N-1}\!
 \frac{\partial^2}{\partial x \partial y}\!\!
 \left(\!\!
q^{p(x-\frac{N-1}{2})^2-r(y-\frac{N-1}{2})^2}
\{2x+1\}\{2y+1\}
\!\!\!\!\!\!\!\!\!\!\!\!\!\!\!\!\!
\left.
\sum_{s - \frac{x-k+y-l}{2} = \max(k,l)}^{\min(k+l, N-1)}
\!\!\!\!\!\!\!\!\!\!\!\!\!\!\!\!
\xi_N(x, y, s)\!\!\right)\!\right|_{\text{\scriptsize
$\begin{matrix}x=k\\[-3pt]
y=l\end{matrix}$}}.
\label{eq:double1}
\end{multline}
%
%
%
%
%
\subsection{Saddle points}
%
%
We investigate the sum 
$
 \sum_s
\xi_N(x, y, s) 
$.
Since the function $\xi_N(x, y, s)$ is non-negative for each $s$ and there exists  $s_0$ such that $\xi_N(x, y, s_0)$ is the maximal among $\xi_N(x,y, s)$.  
In this case, there is some number $C_N$ satisfying $1 \leq C_N\leq N$ satisfying
\begin{equation}
 \sum_{s}
\xi_N(x, y, s)
=
C_N \, \xi_N(x, y, s_0),   
\label{eq:saddles}
\end{equation}
where $s_0$ satisfies
\[
\frac{\partial}{\partial s}\xi_N(x, y, s) = 0.  
\]
\par
Now we compute the maximal point $s_0$ of $\xi_N(x, y, s)$.   
Let $N \alpha = x + \frac{1}{2}$, $N \eta = y + \frac{1}{2}$, $N \gamma = s + \frac{1}{2}$
and
$u=e^{2 \pi i\alpha}$, $v=e^{2 \pi i \eta}$, $w=e^{2 \pi i \gamma}$.  
Then the equation for obtaining the maximal point is 
\[
\log \frac{(w-1)^2 (w- uv)^2}{(w-u)^2(w-v)^2} = 0.   
\]
To solve it,  we first solve
\begin{multline*}
(w-1)^2 (w- u v)^2 - (w-u)^2(w-v)^2 = 
\\
\big((w-1)(w-uv) - (w-u)(w-v)\big)\big((w-1)(w-u v) + (w-u)(w-v)\big) = 
\\
(-1-uv)w\big(2w^2 - (u +1)(v+1) z + 2 uv\big)=0.
\end{multline*}
The solution is  $w = \frac{(u +1)(v+1) \pm \sqrt{(u +1)^2(v+1)^2 - 16 u v}}{4}$.  
The solution corresponding to $q^{2s_0+1}$ is 
\begin{equation}
w_0 = \frac{(u +1)(v+1) - \sqrt{(u +1)^2(v+1)^2 - 16 u v}}{4}, 
\end{equation}
and
$s_0 = -\frac{1}{2}+\frac{N}{2 \pi i} \log w_0$.    
%
%
%
%
%
%
\section{Neumann-Zagier function}
Here we recall some properties of the Neumann-Zagier function developed in \cite{NZ} and \cite{Yos}.  
The relation between this function and the potential function coming from the quantum invariant is observed in  \cite{Yok}. 
\subsection{Neumann-Zagier potential function}
To prove the volume conjecture for double twist knots, we extend the argument in \cite{Yok} to links.  
Let $L = L_1\cup L_2 \cup \cdots \cup L_k$ be a link with connected components.  
Let $\rho$ be an  $\mathrm{SL}(2, \mathbb{C})$ representation 
of $\pi_1(S^3\setminus L)$, $\mu_i$, 
$\lambda_i \in  \pi_1(S^3\setminus L)$ are
elements corresponding to the meridian and longitude of $L_i$, and 
$\xi_i$, $\eta_i$ are  the eigenvalues of $\rho(\mu_i)$ and $\rho(\lambda_i)$ respectively.  
Then there is an analytic function $f(\xi_1, \cdots, \xi_k)$
satisfying the following differential equation.  
\[
\frac{\partial}{\partial \xi_i} f(\xi_1, \cdots, \xi_k) 
=
-2 \log \eta_i. \quad (i = 1, 2, \cdots, l)
\]
Now we assume that $f(1, \cdots, 1)=0$.  
For an integer $l$ satisfying $0 \leq l \leq k$ and rational numbers $p_i/q_i$ for $i = 1, 2, \cdots, l$, 
let $M$ be a three manifold obtained by rational $p_i/q_i$ surgeries along  $L_1$, $L_2$, $\cdots$, $L_l$, and $\rho$ be the representation of $\pi_1(S^3\setminus L)$ corresponding to this surgery.   
Then 
\[
2 p_i \log\xi_i + 2 q_i \log\eta_i = 2 \pi\sqrt{-1}. \quad (i = 1, 2, \cdots, l)
\]  
This function corresponds to the deformation of the hyperbolic structure of the complement of $L$.  
\subsection{Complex volume}
Let $M$ be the manifold obtained by this surgery.  
Assume that $M$ is a hyperbolic manifold.  
Then the complex volume of $M$ is given by $f(\xi_1, \cdots, \xi_k)$ with a small modification.  
The complex volume of $M$ is
\[
\mathrm{Vol}(M) +\sqrt{-1}\,\mathrm{CS}(M)
\]
where $\mathrm{Vol}(M)$ be the hyperbolic volume and 
$\mathrm{CS}$ is the Chern-Simons invariant of $M$.  
Let $\gamma_i$ be the core geodesic of $L_i$ for this surgery, them
\[
\gamma_i = 
2(r_i \log\xi_i + s_i \log\eta_i)
\]
where $r_i$, $s_i$ are integers satisfying $p_i \,s_i - r_i \,q_i = 1$.  
\begin{thm} {\rm(\cite[Theorem 2]{Yos})}
The complex volume of $M$ is given by
\begin{equation}
\mathrm{Vol}(M) + \sqrt{-1}\, \mathrm{CS}(M)
=
\frac{1}{i}\left(
f(\xi_1, \cdots, \xi_k) + \sum_{i=1}^k \log \xi_i \, \log\eta_i - \frac{\pi i}{2}\sum_{i=1}^k \gamma_i
\right).  
\label{eq:NZ}
\end{equation}
\end{thm}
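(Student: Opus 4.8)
The plan is to follow Yoshida's strategy (\cite{Yos}), building on Neumann--Zagier theory (\cite{NZ}, \cite{Yok}), and to regard the complex volume $V:=\mathrm{Vol}+\sqrt{-1}\,\mathrm{CS}$ as an analytic function on the deformation space of (possibly incomplete) hyperbolic structures on $S^3\setminus L$. First I would fix an ideal triangulation of $S^3\setminus L$ and express $V$ as a sum of Rogers dilogarithms of the shape parameters, equivalently as Neumann's extended--Bloch--group invariant, so that at the complete structure it recovers the cusped complex volume and its variation is controlled by the cusp holonomies. Near the discrete faithful representation the deformation variety is smooth, and the assignment $\rho\mapsto(\xi_1,\dots,\xi_k)$ furnishes local analytic coordinates; this lets me treat $V=V(\xi_1,\dots,\xi_k)$ on the same domain on which the Neumann--Zagier potential $f$ of \eqref{eq:NZ} is defined.

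The central step is the Neumann--Zagier first--variation (Schl\"afli--type) formula for the complex volume. For any one--parameter deformation parameter $s$ it reads
\[
\frac{d}{ds}\big(\mathrm{Vol}+\sqrt{-1}\,\mathrm{CS}\big)=\frac{1}{2\sqrt{-1}}\sum_{i=1}^{k}\Big(\log\eta_i\,\frac{d\log\xi_i}{ds}-\log\xi_i\,\frac{d\log\eta_i}{ds}\Big),
\]
the antisymmetric symplectic pairing between meridian and longitude holonomies, with the universal constant fixed by the complete--structure normalization. I would then compare this with the defining differential equation $\partial f/\partial\xi_i=-2\log\eta_i$, $f(1,\dots,1)=0$, recalled in Appendix D. Introducing $g:=f+\sum_{i=1}^{k}\log\xi_i\log\eta_i$ and differentiating, the Legendre--type cross terms $\sum_i d(\log\xi_i\log\eta_i)$ cancel the symmetric part and leave exactly the antisymmetric combination above, so that $\tfrac{1}{\sqrt{-1}}\,dg$ and $dV$ agree along the deformation locus; integrating from the complete structure (all $\xi_i=1$, $f=0$) pins down the additive constant.

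Next I would impose the Dehn filling. Along the surgery locus the relation $2p_i\log\xi_i+2q_i\log\eta_i=2\pi\sqrt{-1}$ holds, and $M$ is obtained as the section of this family; each filled cusp produces a core geodesic whose complex length is precisely $\gamma_i=2(r_i\log\xi_i+s_i\log\eta_i)$ with $p_is_i-r_iq_i=1$. Accounting for the change in complex volume created by drilling and refilling a short geodesic (the Neumann--Zagier / Gromov--Thurston contribution of the core curve) supplies the terms $-\tfrac{\pi i}{2}\sum_i\gamma_i$. Assembling $V=\tfrac{1}{\sqrt{-1}}\big(g-\tfrac{\pi i}{2}\sum_i\gamma_i\big)$ then yields \eqref{eq:NZ} exactly.

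The hard part will be the imaginary (Chern--Simons) component together with the bookkeeping of logarithm branches. The real part---the honest hyperbolic volume---follows cleanly from the Schl\"afli formula and is insensitive to the branch choices, but $\mathrm{CS}(M)$ is only well defined modulo $\pi^2\mathbb{Z}$, reflecting the indeterminacy of the flat $\mathrm{PSL}(2,\mathbb{C})$ connection and of the extended Bloch group, so the equality must be read with that ambiguity in mind. Coordinating the additive constant, the core--geodesic contribution, and the correct branches of $\log\xi_i$ and $\log\eta_i$ so that the formula holds on the nose---rather than merely up to $\pi^2\sqrt{-1}\,\mathbb{Z}$---is the delicate point of \cite{Yos}, and I would rely on the extended--dilogarithm formalism to make all these choices coherent.
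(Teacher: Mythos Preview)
The paper does not prove this theorem at all: it is stated in Appendix~D as a citation of \cite[Theorem~2]{Yos} and is used as a black box, with no argument given. So there is no ``paper's own proof'' to compare your proposal against. Your outline is a reasonable sketch of the Neumann--Zagier/Yoshida approach (Schl\"afli-type variation formula, Legendre transform to pass from $f$ to $g=f+\sum_i\log\xi_i\log\eta_i$, core-geodesic correction under Dehn filling, and branch bookkeeping for the Chern--Simons part), but for the purposes of this paper no proof is expected or needed---the result is simply imported from \cite{Yos}.
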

\section{Deformation of the integral region}
For $D_{6,2}$, $D_{5,3}$, $D_{4, 4}$, $D_{6,-3}$ and $D_{5, -4}$, we already sow that the integral region $[0,1]^2$ can be deformed to another region passing through the saddle point.  
Here we see that the integral region for other cases also can be deformed so that it passes through the saddle point.  
Let $f_{p,r}(\alpha, \beta) = -4\pi^2 \alpha - 4 \pi^2 \beta +\Phi_{D_{p, r}}(\alpha, \beta)$.  
Then $f_{p,r}(\alpha, \beta)$ is continuous with respect to $p$, $r$ for almost all $\alpha$ and $\beta$, the integral region for $D_{4,4}$ passing through the saddle point is deformed continuously with respect to $p$ and $r$.  
However, the analytic continuation of $f(p, r, \alpha, \beta)$ is a multi-variable function and it is not clear that the saddle point for $D_{4, 4}$ is moved to the saddle point of $D_{p,r}$ corresponds to the hyperbolic volume since $f_{p,r}(\alpha, \beta)$ has many singular points.  
Now we focus on the saddle point of $D_{3,3}$.  
Let 
\begin{multline*}
E = \{(\alpha, \beta) \mid 0.45 \leq \operatorname{Re} \alpha  \leq 0.88,\ 
 -0.12 \leq \operatorname{Im} \alpha \leq 0.01, 
\\
0.12 \leq \operatorname{Re} \beta  \leq 0.55,\ 
 -0.12 \leq \operatorname{Im} \beta \leq 0.01\}. 
\end{multline*}
Then $\alpha_0$, $\beta_0$ for $D_{3,3}$ is contained in this region.  
\begin{prop}
The function $f_{p,r}(\alpha, \beta)$ has only one singular point in $E$ for $p, r\geq 3$.  
\end{prop}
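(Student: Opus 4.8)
Here a \emph{singular point} of $f_{p,r}$ means a solution of the saddle-point equation \eqref{eq:saddleD}, i.e.\ a critical point of $f_{p,r}$. The plan is to transport this condition to the exponentiated variables $u=e^{2\pi i\alpha}$, $v=e^{2\pi i\beta}$, where it becomes the algebraic system \eqref{eq:uv}, and then to count its solutions that lie in the image of $E$. Since $\Phi_{D_{p,r}}$ depends on $p$ and $r$ only linearly, through the two twisting quadratic terms, and since the identities relating $\exp\big(\partial_\alpha\Phi_{D_{p,r}}\big)$ and $\exp\big(\partial_\beta\Phi_{D_{p,r}}\big)$ to $p_3$ and $p'_3$ recorded in the subsection on the volume of the complement exponentiate \eqref{eq:saddleD} precisely to \eqref{eq:uv} normalized by the branch condition \eqref{eq:complete}, a singular point of $f_{p,r}$ in $E$ is the same thing as a pair $(u,v)$ with
\[
(-u)^{-p}=p_3(u,v),\qquad (-v)^{\,r}=p'_3(u,v),
\]
where $p_3,p'_3$ are the explicit functions of $u$, $v$, $\sqrt D$ in \eqref{eq:p}, \eqref{eq:pprime} and $D=(u+1)^2(v+1)^2-16uv$. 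On $E$ the real parts of $\alpha$ and of $\beta$ each vary over an interval of length $<1$, so $(\alpha,\beta)\mapsto(u,v)$ is injective on $E$; writing $\widehat E$ for its relatively compact image, it is enough to show the displayed system has exactly one solution in $\widehat E$.

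The next step is a contraction-mapping argument. One forms the map $T(u,v)=\big(-p_3(u,v)^{-1/p},\,-p'_3(u,v)^{1/r}\big)$, with the $p$-th and $r$-th roots and the logarithms implicit in $\Phi_{D_{p,r}}$ fixed by \eqref{eq:complete}, so that the solutions in $\widehat E$ are exactly the fixed points of $T$ there. First one verifies that on $\widehat E$ the quantities $D$, $u-1$, $v-1$ and the arguments of the dilogarithms appearing in $\Phi_{D_{p,r}}$ are bounded away from $0$; then $p_3$ and $p'_3$ are holomorphic and nowhere zero on $\widehat E$, with logarithmic derivatives bounded by an absolute constant $C$, so every entry of the Jacobian $DT$ has modulus at most $C/\min(p,r)\le C/3$. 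Once $C<3$ is confirmed and the one-directional inclusion $T(\widehat E)\subseteq\widehat E$ is checked, $T$ is a contracting self-map of $\widehat E$ and the Banach fixed point theorem yields exactly one fixed point there, hence exactly one singular point of $f_{p,r}$ in $E$; by the branch normalization it is the analytic continuation of the $D_{3,3}$ saddle point $(\alpha_0,\beta_0)\in E$. (Equivalently, the derivative bound feeds an application of Rouch\'e's theorem on $\partial\widehat E$, again returning the count $1$.)

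The main obstacle is the geometric input just invoked: one must exhibit an explicit compact set — $\widehat E$, possibly after a harmless enlargement — on which $D$, $u-1$, $v-1$ and all the dilogarithm arguments keep a definite distance from their zero loci, on which $p_3$ and $p'_3$ stay within a fixed annulus about $1$ so that $T(\widehat E)\subseteq\widehat E$, and for which $\sup_{\widehat E}|u|\,|\partial_u\log p_3|$, $\sup_{\widehat E}|v|\,|\partial_v\log p'_3|$ and the two mixed logarithmic derivatives are bounded by a constant $<3$; this is what makes the contraction ratio $<1$ uniformly in all $p,r\ge 3$. These are elementary but lengthy estimates on a fixed region, which may alternatively be discharged by interval arithmetic. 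A secondary point that needs care is the coherent choice of branches in $T$, which must be made so that the fixed point produced is indeed the geometric solution of \eqref{eq:complete}; this is exactly what ties the present statement to the continuous deformation of the integral region in Proposition \ref{prop:saddle}.
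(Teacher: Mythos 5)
Your proposal takes a genuinely different route from the paper: the paper proves the statement by a degree-theoretic homotopy argument (Lemma E.1 shows the gradient $\bigl(\partial_\alpha f_{p,r},\partial_\beta f_{p,r}\bigr)$ never vanishes on $\partial E$ for $p,r\ge 3$, so the index of the gradient field along $\partial E$ is independent of $p,r$; a direct computation for $f_{3,3}$ then pins the count at one), whereas you pass to the exponentiated system in $(u,v)$ and try to run a Banach fixed-point (or Rouch\'e) argument for the map $T(u,v)=\bigl(-p_3^{-1/p},-{p'_3}^{1/r}\bigr)$. The translation of the critical-point equation into \eqref{eq:uv} with the branch normalization \eqref{eq:complete} is fine, and the idea that the contraction ratio scales like $1/\min(p,r)$ is attractive because it would give uniformity in $p,r$ for free once the base case is controlled.

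However, as written there is a genuine gap: the entire content of the argument is the pair of quantitative claims that (i) the logarithmic derivatives of $p_3$ and $p'_3$ on $\widehat E$ are bounded by a constant $C<3$, and (ii) $T(\widehat E)\subseteq\widehat E$, and you establish neither — you explicitly defer both to ``elementary but lengthy estimates'' or interval arithmetic. Unlike the paper's deferred computations (a contour plot showing non-vanishing of a gradient on a fixed compact boundary, plus one root count at $p=r=3$), your deferred step is not merely a verification but the decisive inequality: nothing in the proposal indicates why $C$ should come out below $3$ rather than, say, $10$, and if it does not, the contraction closes only for large $p,r$ — exactly the regime where the proposition is least needed. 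The region is also delicate: $E$ contains $\alpha$ with $\operatorname{Re}\alpha=1/2$, $\operatorname{Im}\alpha$ near $0$, i.e.\ $u$ near $-1$, where $D$, $p_3$ and the branch of $(-u)^{-1/p}$ all require care, and the self-mapping property $T(\widehat E)\subseteq\widehat E$ is a strong global condition that can easily fail near $\partial\widehat E$ even when the derivative bound holds. Until those two estimates are actually carried out (analytically or by certified numerics) the proof does not go through; alternatively, you could keep your exponentiated formulation but replace the contraction step by the paper's index-stability argument, which only requires non-vanishing of the gradient on the boundary rather than a uniform derivative bound on the interior.
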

This proposition  implies that we are able to deform the integral region for $f_{3, 3}(\alpha, \beta)$ passing through the saddle point to that for $f_{p,r}(\alpha, \beta)$ passing through the saddle point for $D_{p, r}$ if $p, r \geq 3$.  
To show the proposition, we show the following.  
\begin{lem}
For fixed $p$, $r$ with $p, r \geq 3$, the gradient vector of the function $f_{p, r}$, which is $\left(\frac{\partial }{\partial\alpha}f_{p,r}(\alpha, \beta), \frac{\partial }{\partial\beta}f_{p,r}(\alpha, \beta)\right)$, is not vanish on the boundary $\partial E$.  
\end{lem}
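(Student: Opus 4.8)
The plan is to convert the vanishing of the gradient of $f_{p,r}$ — equivalently, the saddle-point equation \eqref{eq:saddleD} — into the exponentiated algebraic system, and then rule that system out on $\partial E$ one face at a time. Writing $u=e^{2\pi i\alpha}$, $v=e^{2\pi i\beta}$, the computations in the subsection above on the volume of the complement give
\[
\exp\!\Big(\tfrac{1}{2\pi i}\,\partial_\alpha f_{p,r}(\alpha,\beta)\Big)=u^{p}\,p_3(u,v),\qquad
\exp\!\Big(\tfrac{1}{2\pi i}\,\partial_\beta f_{p,r}(\alpha,\beta)\Big)=v^{-r}\,p_3'(u,v),
\]
with $p_3$, $p_3'$ the explicit functions of \eqref{eq:p} and \eqref{eq:pprime} (the linear terms $-4\pi^{2}\alpha$, $-4\pi^{2}\beta$ contribute only the factor $e^{2\pi i}=1$). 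Hence a zero of the gradient on $\partial E$ satisfies \emph{both} $u^{p}p_3=1$ and $v^{-r}p_3'=1$ (cf.\ \eqref{eq:uv}), so it suffices to exhibit, on each of the eight codimension-one faces of the box $E$, at least one of these two equations that fails throughout the face. Since $p_3$, $p_3'$ involve $\sqrt D$ with $D=(u+1)^{2}(v+1)^{2}-16uv$, a preliminary step is to check that $D$, $u\pm1$, $v\pm1$, $p_3$ and $p_3'$ have no zero on the compact set $E$, so that $p_3$, $p_3'$ are holomorphic, nowhere zero, and carry a continuous argument there.

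First I would handle the four faces on which an imaginary part is pinned, say $\operatorname{Im}\alpha\in\{-0.12,0.01\}$ (and symmetrically for $\operatorname{Im}\beta$). There $|u|=e^{-2\pi\operatorname{Im}\alpha}$ is constant and $\neq1$, so taking the real part of $\log(u^{p}p_3)$ shows that $u^{p}p_3=1$ forces $\log|p_3(u,v)|=2\pi p\,\operatorname{Im}\alpha$; since $\operatorname{Im}\alpha$ has a fixed nonzero sign on the face and $p\ge3$, this is impossible as soon as $\max_E|p_3|<e^{2\pi\cdot 3\cdot 0.01}$ on the face $\operatorname{Im}\alpha=0.01$ and $\min_E|p_3|>e^{-2\pi\cdot 3\cdot 0.12}$ on the face $\operatorname{Im}\alpha=-0.12$ (and likewise for $p_3'$). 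Because $|u|^{p}$ is monotone in $p$, checking the base exponents $p=3$, $r=3$ covers all $p,r\ge3$ simultaneously. The modulus estimates for $p_3$ are not entirely mechanical: near $u=-|u|$ the numerator $(u+1)^{2}(v^{2}+1)-8uv$ nearly cancels against the denominator $2v(u-1)^{2}$, so one factors out this leading behaviour rather than bounding term by term.

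The four faces where a real part is pinned, say $\operatorname{Re}\alpha=c$ with $c\in\{0.45,0.88\}$ (and symmetrically for $\operatorname{Re}\beta$), are the delicate ones: there $|u|$ is free and $|u^{p}p_3|$ can equal $1$ somewhere on the face, so the phase must be used. Now $\arg(u^{p})$ is constant in the free variables, and as $p$ varies it takes only finitely many equally spaced values, so $u^{p}p_3=1$ forces $\arg p_3(u,v)$ into a fixed finite set of residues; bounding the oscillation of $\arg p_3$ over the connected compact slice $\{\operatorname{Re}\alpha=c\}\cap E$ by an interval shorter than the spacing rules out all but at most one residue, and the single remaining resonant value of $p$ is disposed of by the modulus estimate above or by the companion equation $v^{-r}p_3'=1$; the analogue with $p_3'$, $v$, $r$ covers the $\operatorname{Re}\beta$-faces. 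Making this phase control quantitative — pinning the branch of $\sqrt D$ over the relevant slices and eliminating the finitely many resonant exponents — is where the real work lies; everything else is an explicit estimate on a fixed compact region, and can be cross-checked against the contour plots in Figures \ref{fig:saddle} and \ref{fig:deform}. Once the Lemma holds, the Proposition follows from a degree count: $\nabla f_{p,r}\colon\partial E\to\mathbb C^{2}\setminus\{0\}$ has a well-defined degree, invariant as $(p,r)$ runs over $p,r\ge3$ since no zero crosses $\partial E$, hence equal to its value at $(p,r)=(3,3)$, which is $1$.
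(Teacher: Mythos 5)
Your reduction of $\nabla f_{p,r}=0$ on $\partial E$ to the pair of exponentiated saddle equations $u^{p}p_3=1$, $v^{-r}p_3'=1$ is sound in one direction (a zero of the gradient forces both, so defeating one equation per face suffices), and it is a genuinely different route from the paper's. The paper splits $\partial E=\partial_1E\cup\partial_2E$ and shows the single partial $\frac{\partial}{\partial\alpha}f_{p,r}\neq 0$ on $\partial_1E$ (resp.\ $\frac{\partial}{\partial\beta}f_{p,r}\neq 0$ on $\partial_2E$), handling all $p,r\geq 3$ at once through the affine decomposition $\frac{\partial}{\partial\alpha}f_{p,r}=\frac{\partial}{\partial\alpha}f_{3,3}+\frac{p-3}{2}\frac{\partial}{\partial\alpha}\big(2\pi i(\alpha-\frac{1}{2})\big)^2$: from the plots in Figure \ref{fig:contourf} one reads off that $\nabla\operatorname{Im}f_{3,3}$ is nonzero on the boundary of the $\alpha$-rectangle and never anti-parallel to $\nabla\operatorname{Im}\frac{1}{2}\big(2\pi i(\alpha-\frac{1}{2})\big)^2$, so no nonnegative multiple of the latter can cancel the former. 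That single verification at $(p,r)=(3,3)$ is the whole content of the paper's proof; your scheme replaces it by face-by-face estimates on $p_3$ and $p_3'$.

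The genuine gap is on the four faces where a real part is pinned. There your argument needs the oscillation of $\arg p_3$ over the entire three-real-dimensional slice $\{\operatorname{Re}\alpha=c\}\cap E$ (with $\operatorname{Im}\alpha$ running over $[-0.12,0.01]$ and $\beta$ over its whole rectangle) to be smaller than the spacing of the residues $-2\pi pc\ (\mathrm{mod}\ 2\pi)$; for $c=0.45=9/20$ that spacing is $2\pi/20$ and for $c=0.88=22/25$ it is $2\pi/25$, i.e.\ under $15$--$18$ degrees. That is a strong quantitative claim you neither verify nor make plausible, and since $p_3$ depends nontrivially on both $u$ and $v$ over this large slice it may well fail. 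If it fails, several residues survive, so several values of $p$ remain ``resonant,'' and your fallback for them does not work: the modulus estimate is unavailable on these faces by construction (that is why you introduced the phase argument), and the companion equation $v^{-r}p_3'=1$ cannot be expected to fail throughout a slice that contains points whose $\beta$-coordinate equals the interior saddle value $\beta_0$. The modulus bounds on the four $\operatorname{Im}$-pinned faces are at least checkable in principle, but they too are asserted rather than established, and you yourself note the near-cancellation in $p_3$ near $u=-1$ that makes them delicate. As written, the proposal is a strategy whose decisive steps --- precisely the non-vanishing claims that constitute the lemma --- are deferred, and the step handling the $\operatorname{Re}$-pinned faces appears likely to break.
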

\begin{proof}[Proof of Proposition E.1]
The previous lemma means that the index of the gradient vector  $\left(\frac{\partial}{\partial\alpha}f_{p,r}(\alpha, \beta), \frac{\partial }{\partial\beta}f_{p,r}(\alpha, \beta)\right)$ on $\partial E$ is stable for any $p, r \geq 3$.  
The actual computation shows that  $f_{3,3}(\alpha, \beta)$ has only one singular point in $E$, $f_{p,r}(\alpha, \beta)$ also has only one singular point in $E$ since the index of $\partial E$ is unchanged.  
\end{proof}
\begin{proof}[Proof of Lemma E.1.]
Let 
\begin{multline*}
\partial_1 E = \{(\alpha, \beta)\in E \mid \operatorname{Re}\alpha = 0.45\}
\cup \{(\alpha, \beta)\in E \mid \operatorname{Re}\alpha = 0.88\}
\cup
\\
\{(\alpha, \beta)\in E \mid \operatorname{Im}\alpha = -0.12\}
\cup\{(\alpha, \beta)\in E \mid \operatorname{Im}\alpha = 0.01\}
\end{multline*}
and
\begin{multline*}
\partial_2 E = \{(\alpha, \beta)\in E \mid \operatorname{Re}\beta = 0.12\}
\cup \{(\alpha, \beta)\in E \mid \operatorname{Re}\beta = 0.55\}
\cup
\\
\{(\alpha, \beta)\in E \mid \operatorname{Im}\beta = -0.12\}
\cup\{(\alpha, \beta)\in E \mid \operatorname{Im}\beta = 0.01\}.
\end{multline*}
Then $\partial_1 E$ and $\partial_2 E$ are both isomorphic to the solid torus and $\partial_1 E \cup \partial_2 E = \partial E$.  
We show that $\frac{\partial}{\partial\alpha} f_{p,r}( \alpha, \beta)$ does not vanish on $\partial_1 E$.  
The contour graph of $\operatorname{Im} f_{3,3}( \alpha, \beta)$ and $\operatorname{Im} \frac{1}{2} \big(2 \pi i (\alpha-\frac{1}{2})\big)^2$  on $E$ is given as in Figure \ref{fig:contourf} as a two dimensional movie picture.  
\begin{figure}[htb]
Contours of $\operatorname{Im}f_{p,r}(x + y i, \beta)$ and 
$\operatorname{Im} \frac{1}{2} \big(2 \pi i (x + y i -\frac{1}{2})\big)^2$.
\begin{scriptsize}
\[
\begin{matrix}
yi \uparrow &
\begin{matrix}
\includegraphics[scale=0.9]{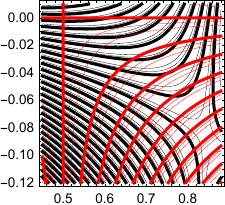}
\end{matrix}
&
\begin{matrix}
\includegraphics[scale=0.9]{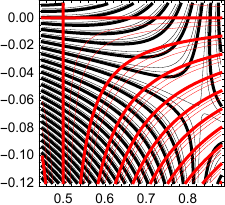}
\end{matrix}
&
\begin{matrix}
\includegraphics[scale=0.9]{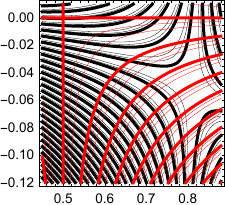}
\end{matrix}
\\
&\beta = 0.12-0.12i &
\beta=0.12-0.06i &
\beta = 0.12 + 0.01i
\\[14pt]
yi \uparrow &
\begin{matrix}
\includegraphics[scale=0.9]{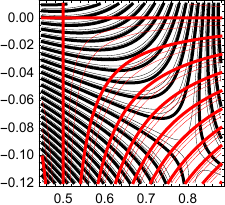}
\end{matrix}
&
\begin{matrix}
\includegraphics[scale=0.9]{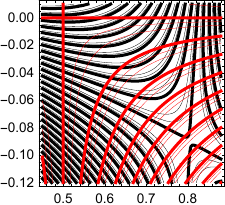}
\end{matrix}
&
\begin{matrix}
\includegraphics[scale=0.9]{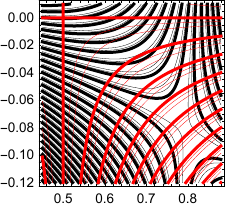}
\end{matrix}
\\
&\beta = 0.25-0.12i &
\beta=0.25-0.06i &
\beta = 0.25 + 0.01i
\\[14pt]
yi \uparrow &
\begin{matrix}
\includegraphics[scale=0.9]{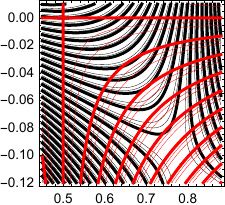}
\end{matrix}
&
\begin{matrix}
\includegraphics[scale=0.9]{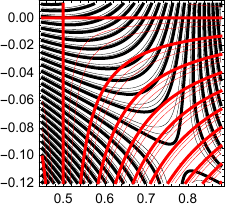}
\end{matrix}
&
\begin{matrix}
\includegraphics[scale=0.9]{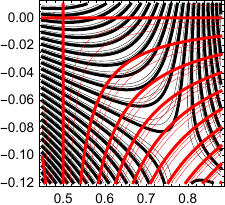}
\end{matrix}
\\
&\beta = 0.5-0.12i &
\beta=0.5-0.06i &
\beta = 0.5 + 0.01i
\\[14pt]
yi \uparrow &
\begin{matrix}
\includegraphics[scale=0.9]{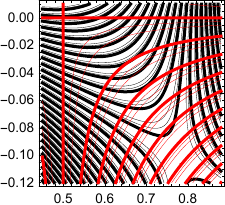}
\end{matrix}
&
\begin{matrix}
\includegraphics[scale=0.9]{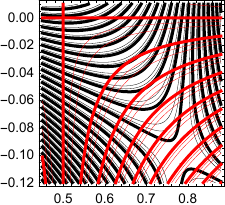}
\end{matrix}
&
\begin{matrix}
\includegraphics[scale=0.9]{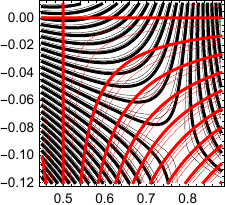}
\end{matrix}
\\
&\beta = 0.55-0.12i &
\beta=0.55-0.06i &
\beta = 0.55 + 0.01i
\end{matrix}
\]
\end{scriptsize}
\caption{The contour graph of the imaginary parts $\operatorname{Im} f_{3,3}(\alpha, \beta)$ and $\operatorname{Im}  \frac{1}{2} \big(2 \pi i (\alpha-\frac{1}{2})\big)^2$  on $E$.  Black lines are contours of $\operatorname{Im} f_{3,3}(\alpha, \beta)$ and red lines are contours of  $\operatorname{Im}  \frac{1}{2} \big(2 \pi i (\alpha-\frac{1}{2})\big)^2$.
The contour levels are $0.2 k$ for thick lines and $0.2 k+0.04$, $0.2 k + 0.08$ for thin lines where $k \in \mathbb{Z}$.}
\label{fig:contourf}
\end{figure}
For each graph, the gradients of the black lines at the boundary square are non-zero.  
Moreover, the gradient vectors of black lines and the red lines at any point of the boundary square are not oriented to the opposite direction, the differential $\frac{\partial }{\partial\alpha}f_{p,r}(\alpha, \beta)$ is not zero on $\partial_1E$ since 
$\frac{\partial}{\partial\alpha} f_{p,r}(\alpha, \beta) = \frac{\partial}{\partial\alpha} f_{3,3}(\alpha, \beta) + \frac{(p-3)}{2}\frac{\partial}{\partial\alpha}\big(2 \pi i (\alpha-\frac{1}{2})\big)^2$.  
\par
By using the similar argument, we see that $\frac{\partial}{\partial\beta}f_{p,r}(\alpha, \beta)$ is not zero on $\partial_2 E$.  
Therefore, $\left(\frac{\partial}{\partial\alpha}f_{p,r}(\alpha, \beta), \frac{\partial}{\partial\beta}f_{p,r}(\alpha, \beta)\right)$ is not zero on $E$.  
\end{proof}
By using similar argument, we can prove that there is only one singular point of $f_{p,-r}(\alpha, \beta)$ for $p,r \geq 3$ in the region
\begin{multline*}
E'
=
\{(\alpha, \beta) \mid 0.45 \leq \operatorname{Re} \alpha  \leq 0.88,\ 
 -0.12 \leq \operatorname{Im} \alpha \leq 0.01, 
\\
0.45 \leq \operatorname{Re} \beta  \leq 0.88,\ 
 -0.12 \leq \operatorname{Im} \beta \leq 0.01\},\end{multline*}
and there is only one singular point of $f(p, 2, \alpha, \beta)$ for $p \geq 6$ in the region
\begin{multline*}
E''
= 
\{(\alpha, \beta) \mid 0.45 \leq \operatorname{Re} \alpha  \leq 0.7,\ 
 -0.04 \leq \operatorname{Im} \alpha \leq 0.01, 
\\
0.12 \leq \operatorname{Re} \beta  \leq 0.55,\ 
 -0.18 \leq \operatorname{Im} \beta \leq 0.01\}. 
\end{multline*}
%
%
%

\end{document}